\DeclareMathOperator{\conv}{conv}
\definecolor{red}{rgb}{1,0,0}
\definecolor{blue}{rgb}{0,0,1}
\newcommand{\ms}{\medskip}
\newcommand{\bpf}{\begin{proof}}
\newcommand{\epf}{\end{proof}\ms}
\newtheorem{theorem}{Theorem}
\newtheorem{corollary}[theorem]{Corollary}
\newtheorem{lemma}[theorem]{Lemma}
\newtheorem{proposition}[theorem]{Proposition}
\newtheorem{observation}[theorem]{Observation}
\theoremstyle{definition}
\newtheorem{conjecture}{Conjecture}
\newtheorem{claim}{Claim}
\begin{document}

\title{Intersections, circuits, and colorability of line segments}

\author{Boris Brimkov\thanks{Department of Computational and Applied Mathematics, Rice University, Houston, TX, 77005, USA (boris.brimkov@rice.edu)} \hskip 1.5em
Jesse Geneson\thanks{Department of Mathematics, Iowa State University, Ames, IA, 50011, USA (geneson@gmail.com)} \hskip 1.5em
Alathea Jensen\thanks{Department of Mathematics and Computer Science, Susquehanna University, Selinsgrove, PA, 17870, USA (jensena@susqu.edu)}\\
Jordan Miller\thanks{Department of Mathematics and Statistics, Washington State University, Pullman, WA, 99164, USA (jordan.a.miller@wsu.edu)} \hskip 1.5em
Pouria Salehi Nowbandegani\thanks{Department of Mathematics, Vanderbilt University, Nashville, TN, 37235, USA (pouria.salehi.nowbande\-gani@vanderbilt.edu)}}

\date{}

\maketitle

\begin{abstract}
We derive sharp upper and lower bounds on the number of intersection points and closed regions that can occur in sets of line segments with certain structure, in terms of the number of segments. We consider sets of segments whose underlying planar graphs are Halin graphs, cactus graphs, maximal planar graphs, and triangle-free planar graphs, as well as randomly produced segment sets. We also apply these results to a variant of the Erd\H{o}s-Faber-Lov\'asz (EFL) Conjecture stating that the intersection points of $m$ segments can be colored with $m$ colors so that no segment contains points with the same color. We investigate an optimization problem related to the EFL Conjecture for line segments, determine its complexity, and provide some computational approaches. 
\smallskip

\noindent {\bf Keywords:} Line segment, intersection point, planar graph, Erd\H{o}s-Faber-Lov\'asz Conjecture 
\end{abstract}

\section{Introduction}
Sets of straight line segments with special structures and properties appear in various applications of geometric modeling, such as scientific visualization, computer-aided design, and medical image processing. In the present paper, we investigate geometric and graph theoretic properties of segment sets with special structure. 

Let $M$ be a finite set of line segments of nonzero length drawn in the plane. Collinear intersecting segments will be treated as a single segment. Let $P(M)$ be the set of all intersection points of segments in $M$ and $J(M)$ be the set of endpoints of segments in $M$; note that $P(M) \cap J(M)$ may be non-empty. Let $G_M=(V(G_M),E(G_M))$ be the graph whose vertex set is $P(M)\cup J(M)$ and where vertices $u$ and $v$ are adjacent whenever there is a segment $s\in M$ which contains $u$ and $v$, such that there is no $w \in (P(M)\cup J(M)) \cap s$ that is between $u$ and $v$. Note that $G_M$ is a planar graph; unless otherwise stated, we will assume $G_M$ is endowed with the plane embedding specified by the drawing of $M$.

Let $C(M)$ be the set of inclusion-minimal closed simple polygonal curves in $M$ (equivalently, the set of bounded faces of $G_M$); we will call the elements of $C(M)$ \emph{circuits}. By a {\em circuit segment set} of $M$ we will mean the set of segments in $M$ that contribute to a circuit of $M$ by infinitely many points. There is a one-to-one correspondence between the circuits of $M$, the circuit segment sets of $M$, and the bounded faces of $G_M$. We will call a path-connected component of $M$ \emph{trivial} if it consists of a single segment, and \emph{nontrivial} if it contains two or more segments. Given a segment $s\in M$, $M\backslash s$ denotes removing all non-intersection points of $s$ from $M$. Similarly, given a subset of segments $S=\{s_1,\ldots,s_k\}\subset M$, $M\backslash S$ denotes $M\backslash s_1\backslash \ldots \backslash s_k$. We will also define $m(M)=|M|$, $p(M)=|P(M)|$, $j(M)=|J(M)|$, $c(M)=|C(M)|$, $n(M)=|V(G_M)|$, $e(M)=|E(G_M)|$, and $k_1(M)$ and $k_2(M)$ respectively as the number of trivial and nontrivial components of $M$. 
When there is no scope for confusion, dependence on $M$ in all definitions will be omitted. We will also use analogous definitions when $M$ is a set of simple open curves, or a set of lines; in the latter case, $J(M)=\emptyset$. 
The following are basic relations between the quantities defined above. 
\begin{observation}
\label{obs1}
For any segment set $M$,
\begin{enumerate}
\item $p\leq \binom{m}{2}$
\item $c\leq \binom{m-1}{2}$
\item $e\geq m$
\item $n\leq p+j$
\item $m\geq k_1+2k_2$
\item $p\geq k_2$.
\end{enumerate}
All bounds are sharp, i.e., there are classes of segment sets for which the bounds hold with equality. 
\end{observation}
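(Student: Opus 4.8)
The plan is to treat the six inequalities separately, since five of them follow from short counting arguments and only the second requires real work. For (1), I would use the fact that two distinct segments (which are non-collinear, as collinear overlapping segments are merged) meet in at most one point; mapping each intersection point to a pair of segments passing through it gives an injection into the $\binom{m}{2}$ pairs, since if two distinct points mapped to the same pair $\{s,t\}$ then $s$ and $t$ would meet twice. For (3), every edge of $G_M$ lies on exactly one segment (two non-collinear segments share at most a point), and each segment, having two distinct endpoints, contributes at least one edge, so $e \ge m$. Inequality (4) is immediate from $V(G_M) = P(M)\cup J(M)$ together with inclusion--exclusion. For (5) I would count segments by component, since each trivial component contributes one segment and each nontrivial component at least two. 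For (6), each nontrivial component contains an intersecting pair and hence at least one intersection point, while points in distinct components are distinct.

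The crux is (2), the bound $c \le \binom{m-1}{2}$ on the number of bounded faces. My approach is to extend every segment to the full line containing it, obtaining an arrangement $A$ of at most $m$ lines. I would first establish a monotonicity statement: passing from $M$ to $A$ never decreases the number of bounded faces, because extending a segment only refines the planar subdivision, so adjoining edges and vertices can split faces but can never merge two faces or turn a bounded face into an unbounded one. This gives $c(M) \le c(A)$. I would then invoke the classical fact that an arrangement of $\ell$ lines has at most $\binom{\ell-1}{2}$ bounded cells, the maximum being attained in general position and strictly fewer cells arising under concurrences or parallelism; since $\ell \le m$ and $\binom{\ell-1}{2} \le \binom{m-1}{2}$, this yields the claim.

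I expect the monotonicity step to be the main obstacle. One must verify carefully that extending a single segment to a line (equivalently, adjoining two rays) cannot destroy a bounded face, and must account for degeneracies such as several segments becoming collinear upon extension (so that $A$ has strictly fewer than $m$ lines) or new concurrences forming among the extended lines. These degeneracies only help the inequality, but they should be addressed explicitly rather than assumed away, and the argument should be phrased so that the comparison $c(M)\le c(A)$ holds regardless of how the embedding of $G_M$ interacts with the arrangement $A$.

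Finally, for sharpness I would exhibit a witness for each bound: a set of $m$ sufficiently long segments in general position, whose pairwise crossings all fall strictly inside the segments, simultaneously attains equality in (1), (2), and (4), since all $\binom{m}{2}$ crossings occur and are distinct, the bounded cells coincide with those of the underlying line arrangement, and no crossing is an endpoint so $P\cap J=\emptyset$; a set of $m$ pairwise disjoint segments attains equality in (3); and a disjoint union of $k_1$ isolated segments together with $k_2$ ``X''-shaped pairs, each producing a single crossing, attains equality in (5) and (6). I would confirm that these configurations genuinely realize the claimed values, noting that different bounds may legitimately require different witnesses.
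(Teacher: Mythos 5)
Your proposal is correct: all six inequalities and the sharpness witnesses check out, including the injection for (1) (well-defined because two intersecting segments are non-collinear under the paper's merging convention, so no pair of segments can share two points) and the face-refinement argument giving $c(M)\le c(A)\le\binom{\ell-1}{2}\le\binom{m-1}{2}$. The paper states Observation~\ref{obs1} without proof, so there is no written argument to compare against; note, though, that your treatment of (2) --- extending segments to lines and using that an arrangement of lines in general position has $\frac{m^2+m+2}{2}-2m=\binom{m-1}{2}$ bounded cells --- is precisely the device the authors themselves deploy later in the segment $K_3$-free theorem, so your route matches their evident intent.
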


Since every planar graph has an embedding where its edges are mapped to straight line segments (cf. \cite{wagner}), for any planar graph $G$ there exists a segment set $M$ such that $G_M\simeq G$ (any edges incident to a degree 2 vertex $v$ which are drawn as collinear segments in a straight-line embedding of $G$ can be slightly shifted so that $v$ becomes an intersection point). Thus, there is an equivalence between sets of line segments and planar graphs. Given a family $\mathcal{F}$ of planar graphs, we will refer to the family $\{M \text{ is a set of segments}:G_M\in \mathcal{F}\}$ as \emph{segment}-$\mathcal{F}$. For instance, any set of segments with $c=0$ will be called a \emph{segment forest}, and any set of segments which is path-connected and has $c=0$ will be called a \emph{segment tree}. Similarly, we will refer to the family $\{M \text{ is a set of lines}:G_M\in \mathcal{F}\}$ as \emph{line}-$\mathcal{F}$. 
For special classes of segment sets, the bounds from Observation \ref{obs1} can be improved. The following are sharp bounds for some simple families of segment sets.

\begin{observation} 
\label{obs2}
\begin{enumerate}
\item[]
\item If $M$ is a segment tree, $p\leq  m-1$ and $c=0$.    
\item If $M$ is a segment forest, then $p \leq m-k_1-k_2$ and $c=0$.
\item If $M$ is a segment unicyclic graph, then $p\leq m$ and $c=1$.
\end{enumerate}
\vspace{7pt}
\noindent Moreover, all bounds are sharp.
\end{observation}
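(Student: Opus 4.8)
The plan is to derive all three statements from a single counting inequality together with Euler's formula, so that the three cases are just instantiations of one estimate.

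First I would establish the incidence bound $e \geq n + p - m$. The idea is that each segment $s$ is subdivided by the vertices of $G_M$ lying on it: if $v_s$ denotes the number of such vertices (the two endpoints of $s$, together with every point of $P \cup J$ in the relative interior of $s$), then the portions of $s$ between consecutive vertices are exactly $v_s - 1$ edges of $G_M$. Because collinear overlapping segments are identified into a single segment, no edge belongs to two different segments, so summing over all segments gives $e = \sum_{s \in M}(v_s - 1) = \left(\sum_{s} v_s\right) - m$. Reindexing the incidence sum by vertices, $\sum_s v_s = \sum_{v \in V(G_M)} \sigma(v)$, where $\sigma(v)$ is the number of segments passing through $v$. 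Every intersection point lies on at least two segments and every vertex lies on at least one, so $\sum_v \sigma(v) \geq 2p + (n-p) = n+p$, whence $e \geq n + p - m$.

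Next I would feed this into the face count. Since $G_M$ is planar with $k_1 + k_2$ connected components, Euler's formula gives $c = e - n + (k_1+k_2)$ for the number of bounded faces, and substituting the inequality yields the single estimate $p \leq m - k_1 - k_2 + c$. Each part of the observation is now an instantiation: a segment tree is connected and acyclic, so $k_1 + k_2 = 1$ and $c = 0$, giving $p \leq m-1$; a segment forest is acyclic, so $c = 0$, giving $p \leq m - k_1 - k_2$; and a segment unicyclic graph is connected with a single cycle, so $k_1+k_2 = 1$ and $c = 1$, giving $p \leq m$. The asserted values $c = 0$ and $c = 1$ are themselves immediate, since a forest has no cycle and a unicyclic graph bounds exactly one face. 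For sharpness I would exhibit tight families: joining $m$ segments end-to-end into an open polygonal path (consecutive segments sharing one endpoint, with no other crossings) produces a segment tree with $p = m-1$; a disjoint union of such paths together with $k_1$ isolated segments realizes $p = m - k_1 - k_2$ for a forest; and closing $m$ segments into a convex polygon gives a segment unicyclic graph with $p = m$ and $c = 1$. In each case every intersection point lies on exactly two segments, which is precisely the equality condition for $\sum_v \sigma(v) \geq n+p$.

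The only delicate points, and hence where I would be most careful, are the bookkeeping behind $e = \sum_s v_s - m$: one must use that segments are straight (so the vertices on a segment are linearly ordered and the arcs between consecutive ones really are edges) and that collinear overlaps are merged (so each edge is charged to a unique segment), and one must confirm that the number of connected components of $G_M$ equals $k_1 + k_2$. Once these conventions are pinned down, the argument is routine and the three parts follow with no further case analysis.
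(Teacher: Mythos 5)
Your proposal is correct, and it is worth noting that the paper itself supplies no argument here: Observation \ref{obs2} is stated without proof, the intended justifications presumably being quick ad hoc ones (e.g., for a segment tree, an induction using Corollary \ref{cor1}: repeatedly remove a segment with a single intersection point, losing one segment and at most one point of $P$). Your route is genuinely different and more systematic: the exact count $e=\sum_{s\in M}(v_s-1)=\sum_{v}\sigma(v)-m$ together with $\sigma(v)\geq 2$ on $P$ and $\sigma(v)\geq 1$ elsewhere gives $e\geq n+p-m$, and Euler's formula for a plane graph with $k_1+k_2$ components turns this into the master bound $p\leq m-(k_1+k_2)+c$, of which all three parts are instantiations. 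The bookkeeping you flag as delicate does check out against the paper's conventions: collinear intersecting segments are merged (so two distinct segments share at most one point and each edge of $G_M$ is charged to a unique segment), every endpoint of a segment is a vertex (so the $v_s-1$ arcs between consecutive vertices exhaust $s$, hence the union of edges is $M$ and components of $G_M$ biject with path-connected components of $M$), and $c=0$ for trees/forests and $c=1$ for unicyclic sets are immediate from the paper's definitions. Your sharpness examples (non-collinear polygonal paths, their disjoint unions with isolated segments, and a convex $m$-gon) match what the paper clearly has in mind, and your observation that equality forces every intersection point to lie on exactly two segments correctly identifies the only slack in the estimate. What your approach buys is a single inequality valid for \emph{every} segment set, which also quantifies the tradeoff between $p$, $c$, and the component structure (for instance it shows $p\leq m-k_1-k_2+c$ in general, refining the spirit of Observation \ref{obs1}); the cost is that it is heavier machinery than the one-line-per-case arguments the paper treats as self-evident.
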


\noindent As an example of a nontrivial result of this kind, Poonen and Rubinstein \cite{poonen} computed $p$ and $c$ for a set of segments formed by the diagonals of a regular polygon. A conference version of the present paper \cite{brimkov_iwcia} computed $p$ and $c$ for segment cactus graphs. See also \cite{dujmovic,durocher,green,igamberdiev,oliveros,samee} for related questions on drawing planar graphs with few segments, and combinatorial properties of sets of lines and segments.

These kinds of bounds can be used to analyze the time and space complexity of algorithms for finding the  intersections and bounded regions occurring in a set of segments in terms of $m$, $p$, and $c$; these are fundamental tasks in computational geometry and have been widely studied (cf. \cite{balaban,bentleyOttmann,chazelle,PreparataShamos,tiernan}). For example, the algorithms of Bentley-Ottmann \cite{bentleyOttmann}, Chazelle \cite{chazelle}, and Balaban \cite{balaban}, which compute all intersections in a given set of segments, have respective time complexities of $O((m+p) \log m)$, $O(p + \frac{m \log^2 m}{\log \log m})$, and $O(p+m \log m)$, the last one being optimal for general segment sets. The worst case performance of these algorithms is achieved for sets of segments with $\Omega(m^2)$ intersections, and is respectively $\Omega(m^2 \log m)$ for Bentley-Ottmann's algorithm, and $\Omega(m^2)$ for Chazelle's and Balaban's algorithms. However, as we show in the sequel, segment sets which feature a Halin or cactus structure have $p=O(m)$; thus, for these types of segment sets, Bentley-Ottmann's and Balaban's algorithms run in $O(m \log m)$ time and are superior to Chazelle's algorithm, which runs in $O(\frac{m \log^2 m} {\log \log m})$ time. As another example, Chen and Chan \cite{chenChan} and Vahrenhold et al. \cite{bose,vahrenhold} presented in-place algorithms for finding all intersections in a set of segments (i.e., algorithms which use $O(1)$ cells of memory in addition to the input array). The time complexity of these algorithms is $O((m +p) \log m)$ and $O(m \log^2 m + p)$, respectively; for arbitrary segment sets, Vahrenhold's algorithm is superior to Chen-Chan's algorithm, as they respectively require $\Omega(m^2)$ and $\Omega(m^2 \log m)$ time in the worst case. However, for the aforementioned classes of segment sets, Chen-Chan's algorithm runs in  $O(m \log m)$ time and is superior to Vahrenhold's algorithm which runs in $O(m \log^2 m)$ time. 

In some cases, there may be direct relations between $p$, $c$, and $m$. For example, in segment maximal outerplanar graphs\footnote{A graph is \emph{maximal outerplanar} if it has a plane embedding in which all vertices belong to the outer face, and adding any edge to the graph causes it to no longer have this property.}, $p=c+2$; however, while maximal outerplanar graphs with $n$ vertices have exactly $2n-3$ edges, a segment maximal outerplanar graph may be realized with far fewer segments. For example, the segment maximal outerplanar graph in Figure \ref{fig_segment_MOP} has $p=n=m=9$. This is possible because a segment can participate in arbitrarily many intersection points and circuits, while a graph edge is incident to exactly two vertices and two faces. See \cite{brevilliers,brimkov1,brimkov2,brimkov3,castro,chan,francis,kara,rappaport} and the bibliographies therein for other applications of computing $p$ and $c$, as well as for techniques and results on other problems defined on segment sets and on graphs constructed through segment sets.

\begin{figure}[h]
\begin{center}
\includegraphics[scale=0.25]{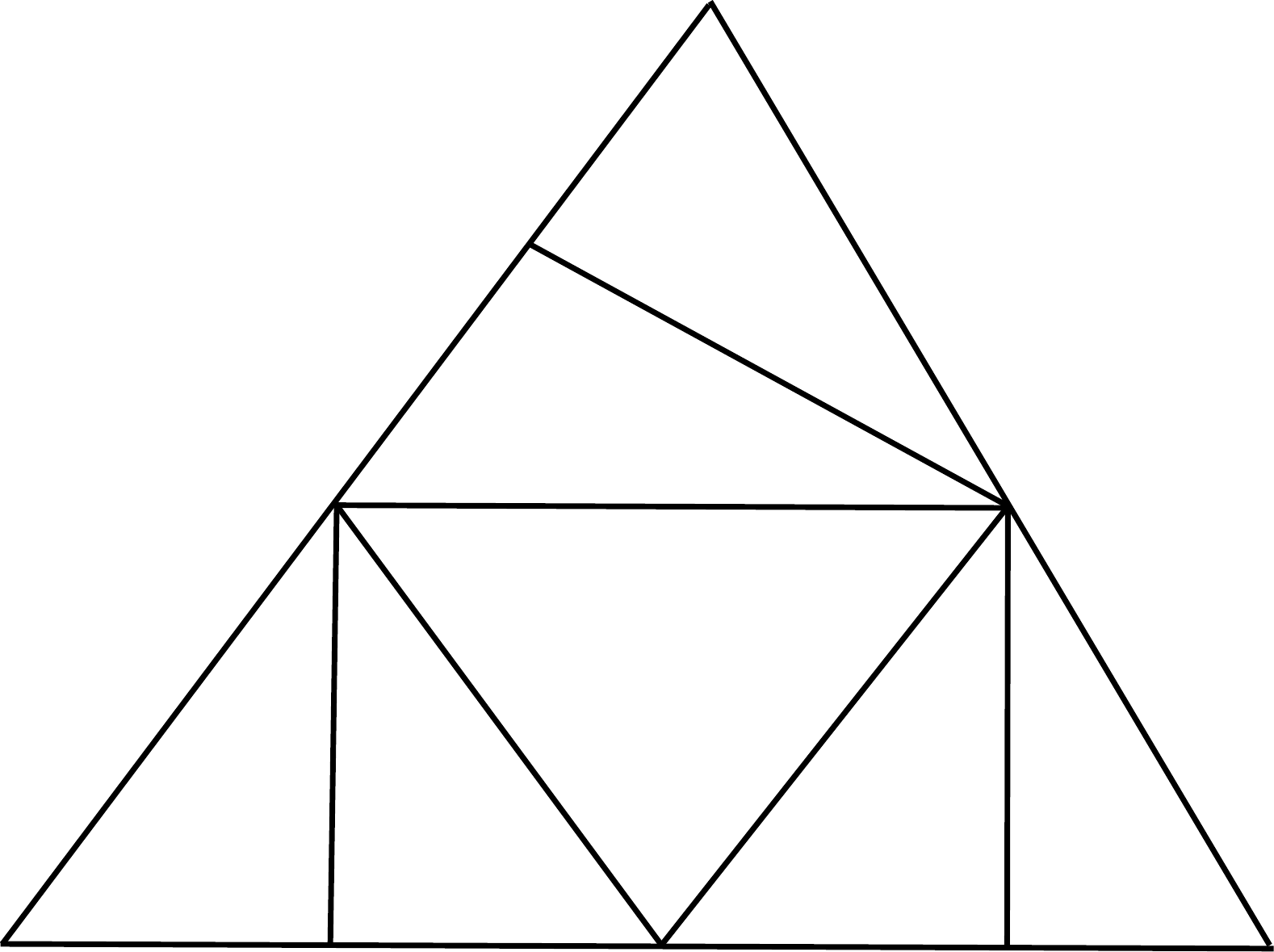}
\end{center}
\caption{A segment maximal outerplanar graph.}
\label{fig_segment_MOP}
\end{figure}

This paper is organized as follows. In the next section, we give some preliminary definitions and results. In Section \ref{section_bounds}, we give bounds on the number of intersections and circuits for various families of segment sets. In Section \ref{section_efl}, we formulate and explore a variant of the Erd\H{o}s-Faber-Lov\'asz Conjecture and a related optimization problem defined on sets of segments and lines. We end with some final remarks and directions for future work in Section \ref{section_conclusion}.


\section{Preliminaries}

A \emph{cut vertex} of a graph $G$ is a vertex whose deletion increases the number of connected components of $G$. A \emph{biconnected component} or \emph{block} of $G$ is a maximal subgraph of $G$ which has no cut vertices. An isomorphism between graphs $G_1$ and $G_2$ will be denoted by $G_1\simeq G_2$. Given a vertex $v$ of $G$, $G-v$ will denote $G$ with $v$ removed, along with all edges incident to $v$. A vertex of $G$ is a \emph{leaf} if it has a single neighbor in $G$. $K_n$ denotes the complete graph on $n$ vertices.


Let $M$ be a set of segments and $s$ be a segment of $M$ with endpoints $a$ and $b$. Let $a'$ be the first intersection point in $s$ encountered when moving along $s$ in a straight line from $a$ to $b$ in $M$, and $b'$ be the last intersection point encountered. \emph{Trimming} $s$ is the operation of replacing $s$ by a segment $s'$ with endpoints $a'$ and $b'$; if $s$ has fewer than two intersection points, then trimming $s$ means deleting $s$. \emph{Trimming} $M$ means repeatedly trimming the segments in $M$ until further trimming yields no difference. Note that it may be possible to trim a segment, then trim another segment, and then trim the first segment again. See Figure \ref{trim} for an illustration of trimming.

\begin{figure}[h]
\begin{center}
\includegraphics[scale=0.4]{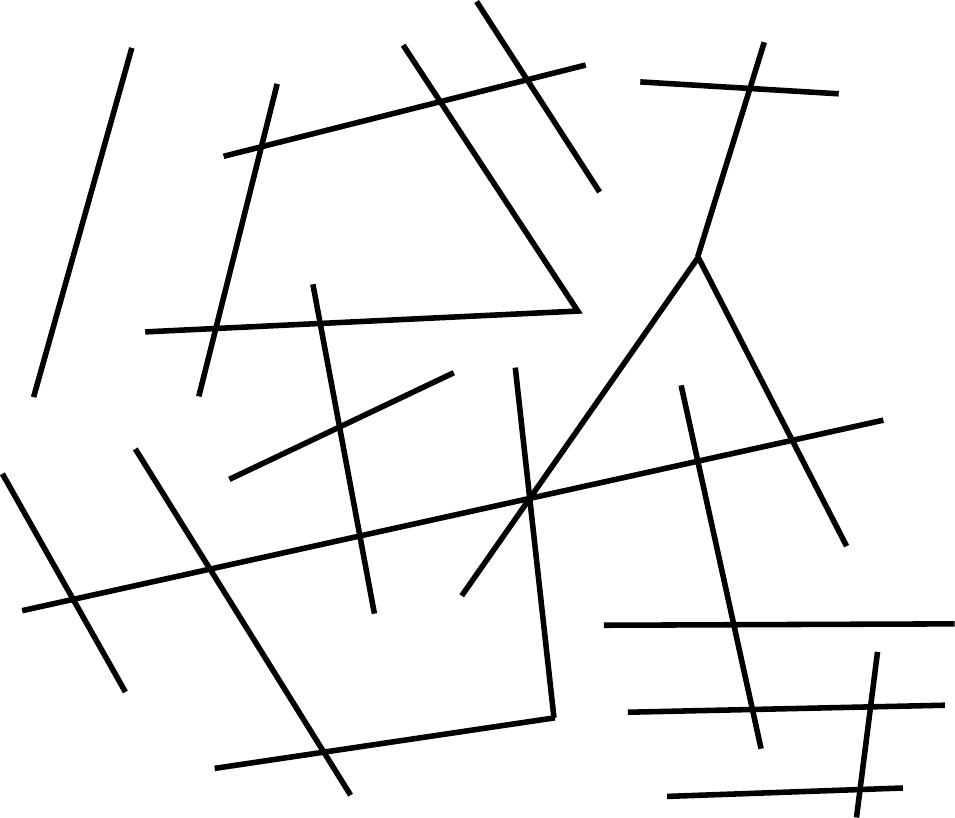} \qquad\quad
\includegraphics[scale=0.4]{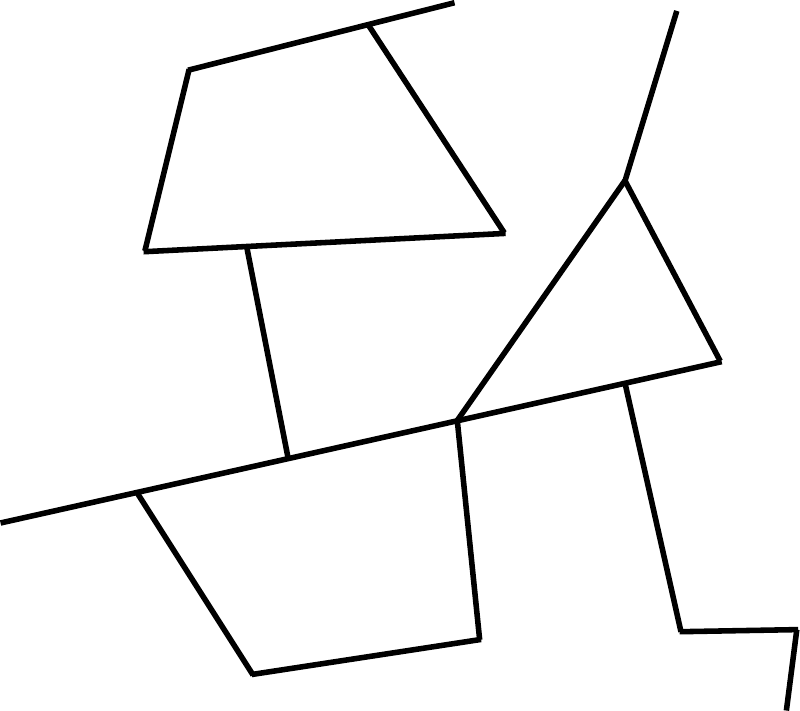} \qquad\quad
\includegraphics[scale=0.4]{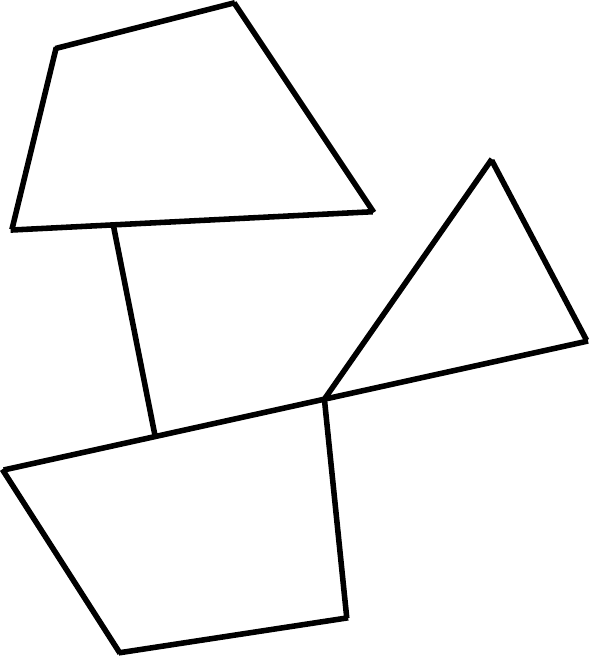}
\end{center}
\caption{\emph{Left:} Set of segments $M$. \emph{Middle:} Trimming every segment of $M$ once. \emph{Right:} Trimming $M$.}
\label{trim}
\end{figure}

\noindent We end this section with some preliminary observations about segment sets.

\begin{proposition}
For any nontrivial connected segment set $M$, there are at least two segments $s_a$ and $s_b$ in $M$ such that $M\backslash s_a$ and $M\backslash s_b$ are connected.
\label{prop1}
\end{proposition}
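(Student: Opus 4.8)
The plan is to translate the geometric statement into a purely combinatorial one about an auxiliary graph. Define the \emph{intersection graph} $H$ of $M$ to have one vertex for each segment of $M$, with two vertices adjacent exactly when the corresponding segments share a point. Since each segment is itself path-connected, and the union of finitely many path-connected sets is path-connected precisely when the graph recording which sets meet is connected, $M$ is a nontrivial connected segment set if and only if $H$ is connected with at least two vertices. I would first record that for any segment $s$, the point set $M\backslash s$ coincides with the union of the segments other than $s$: every point of $s$ that is retained is by definition an intersection point, hence already lies on some other segment, while every point that is discarded is a non-intersection point of $s$ and therefore lies on no other segment. Consequently $M\backslash s$ is connected if and only if $H-s$ is connected, i.e. if and only if $s$ is not a cut vertex of $H$.

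With this dictionary in hand, the proposition reduces to the classical graph-theoretic fact that every connected graph on at least two vertices has at least two vertices that are not cut vertices. I would prove this by taking any spanning tree $T$ of $H$; since $T$ has at least two vertices it has at least two leaves, say $x$ and $y$. Removing a leaf from a tree leaves a tree on the remaining vertices, so $T-x$ and $T-y$ are connected spanning subgraphs of $H-x$ and $H-y$ respectively, whence neither $x$ nor $y$ is a cut vertex of $H$. Letting $s_a$ and $s_b$ be the segments corresponding to $x$ and $y$ then yields two distinct segments with $M\backslash s_a$ and $M\backslash s_b$ both connected.

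The only genuinely delicate point, and the step I would write out most carefully, is the equivalence between connectivity of the geometric object and connectivity of $H$, together with the verification that the nonstandard definition of $M\backslash s$ (which retains the intersection points of $s$ rather than deleting $s$ outright) does not affect this equivalence. The subtlety is that a retained point of $s$ could in principle act as a ``bridge''; but because such a point is an intersection point it lies on another segment and so contributes nothing beyond the connectivity already encoded by the edges of $H-s$, which I would confirm by checking the cases of double and triple intersection points lying on $s$. Everything after the translation is routine: the spanning-tree argument is short, and the hypothesis that $M$ is nontrivial is exactly what guarantees that $H$ has the two vertices needed to produce two distinct leaves.
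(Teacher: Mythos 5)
Your proposal is correct and follows essentially the same route as the paper: you build the same auxiliary intersection graph $H$ and reduce the statement to the existence of two non-cut vertices in a connected graph on at least two vertices. The only differences are tidy refinements — you observe that $M\backslash s$ coincides with the union of the remaining segments (giving a clean two-way equivalence where the paper verifies the two connectivity implications by explicit path-chasing), and you prove the two-non-cut-vertices fact via spanning-tree leaves, whereas the paper simply cites it; just note that your blanket claim about unions of path-connected sets needs the sets to be closed for the ``only if'' direction, which holds here since segments are compact.
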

\proof
Let $H$ be a graph which has a vertex for each segment in $M$, and where two vertices are adjacent whenever the corresponding segments intersect in $M$. 

Let $s_x$ and $s_y$ be any two vertices of $H$, and $x$ and $y$ be non-intersection points respectively belonging to the segments $s_x$ and $s_y$ in $M$. Since $M$ is connected, there is a path $x, p_1,\ldots, p_k, y$ between $x$ and $y$, where $p_1,\ldots,p_k$ are parts of segments (or entire segments) of $M$. In particular, let $p_t\subseteq s_{i_t}$ for $1\leq t\leq k$ (where $s_{i_1}=s_x$  and $s_{i_k}=s_y$). By construction of $H$, for $1\leq t\leq k-1$, $s_{i_t}$ is adjacent to $s_{i_{t+1}}$ in $H$. Thus, the path $x, p_1,\ldots, p_k, y$ in $M$ corresponds to a path $s_x, s_{i_1},\ldots, s_{i_k}, s_y$ in $H$, so $H$ is connected.

Since any connected graph with at least two vertices has at least two non-cut vertices, $H$ has two non-cut vertices $s_a$ and $s_b$. We claim that $M\backslash s_a$ and $M\backslash s_b$ are connected. To see why, let $x$ and $y$ be any two points in $M\backslash s_a$. If $x$ and $y$ belong to the same segment, clearly there is a path between them. Otherwise, let $s_x$ and $s_y$ respectively be segments containing $x$ and $y$. Since $s_a$ is a non-cut vertex of $H$, $H-s_a$ is connected. Let $s_x,s_{i_1},\ldots,s_{i_k},s_y$ be a simple path between $s_x$ and $s_y$ in $H-s_a$. By construction of $H$, segments $s_x$ and $s_{i_1}$ intersect in $M$; thus, there is a path between $x$ and every point in $s_{i_1}$. Similarly, segments $s_{i_t}$ and $s_{i_{t+1}}$ intersect in $M$ for $1\leq t\leq k-1$, and $s_{i_k}$ intersects $s_y$, so there is a path between $x$ and $y$ in $M\backslash s_a$. Thus, $M\backslash s_a$ is connected; similarly, $M\backslash s_b$ is connected.
\qed

\begin{corollary}
Any nontrivial segment tree $M$ contains at least two segments $s_a$ and $s_b$ such that $M\backslash s_a$ and $M\backslash s_b$ are connected, and such that $s_a$ and $s_b$ each contain a single intersection point. 
\label{cor1}
\end{corollary}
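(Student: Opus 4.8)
The plan is to build on Proposition \ref{prop1}. Its proof constructs the graph $H$ whose vertices are the segments of $M$, with two segments adjacent when they intersect, and establishes two facts: $H$ is connected, and for every non-cut vertex $s$ of $H$ the set $M\backslash s$ is connected. Since a connected graph on at least two vertices has at least two non-cut vertices, this already yields two segments $s_a,s_b$ with $M\backslash s_a$ and $M\backslash s_b$ connected. Thus it remains only to show that, when $M$ is a segment tree, every non-cut vertex of $H$ contains a single intersection point; the two non-cut vertices supplied by Proposition \ref{prop1} will then be the desired segments.

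The engine of the argument is the following concurrency lemma: if $c(M)=0$ and segments $t_1,\dots,t_\ell$ with $\ell\ge 3$ form a cycle in $H$, then $t_1,\dots,t_\ell$ all pass through a common point. I would prove this by induction on $\ell$. For the base case $\ell=3$, writing $q_i$ for the crossing point of consecutive segments, either the three points $q_1,q_2,q_3$ are distinct, in which case travelling along the three segments between them produces a triangle in $G_M$, contradicting $c(M)=0$; or two of them coincide, which immediately forces all three segments through that common point, since two distinct segments meet in at most one point. For the inductive step with $\ell\ge 4$: if the cycle has a chord $t_it_j$, I split it into two shorter cycles sharing the edge $t_it_j$, apply the inductive hypothesis to each, and observe that the two resulting common points both lie on the single point $t_i\cap t_j$ and hence coincide; if instead the cycle is chordless, then no two of the crossing points $q_1,\dots,q_\ell$ can coincide, because a coincidence would place two non-consecutive segments through one point and thereby create a chord, so the segments between successive $q_i$ trace a genuine $\ell$-cycle in $G_M$, again contradicting $c(M)=0$.

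Granting the lemma, I finish as follows. Let $s$ be any non-cut vertex of $H$, so $H-s$ is connected, and let $t,t'$ be two neighbors of $s$ in $H$. A path from $t$ to $t'$ in $H-s$ together with $s$ closes up to a cycle through $s$, $t$, and $t'$, so by the lemma these segments share a point $q$; since $s$ meets each of $t,t'$ in at most one point, $s\cap t=s\cap t'=q$. As $t,t'$ were arbitrary neighbors of $s$, every segment crossing $s$ does so at the single point $q$, so $s$ contains exactly one intersection point. Applying this to the two non-cut vertices $s_a,s_b$ from Proposition \ref{prop1} completes the proof.

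I expect the concurrency lemma to be the main obstacle, and within it the topological step, verifying that distinct crossing points really do yield a circuit of $M$ (a bounded face of $G_M$) rather than a degenerate closed walk that an acyclic $G_M$ could tolerate, is the point requiring the most care; the chord-splitting bookkeeping and the single-crossing facts are routine by comparison.
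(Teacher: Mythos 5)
Your proof is correct, but it takes a genuinely different route from the paper's. The paper disposes of the corollary in two sentences: having taken $s_a$ and $s_b$ from Proposition \ref{prop1}, it supposes $s_a$ contains two intersection points $x\neq y$ and observes that in a segment tree the only path between the segments meeting $s_a$ at $x$ and those meeting it at $y$ runs along $s_a$ itself, so $M\backslash s_a$ would be disconnected, contradicting the choice of $s_a$; the same direct argument shows that \emph{any} segment with two or more intersection points is indispensable for connectivity. You instead work inside the auxiliary intersection graph $H$ and prove a standalone structural fact: if $c(M)=0$, the segments of any cycle of $H$ are concurrent. Your induction is sound: in the base case the three crossing points cannot be distinct yet collinear (that would force two segments onto one line, and the paper merges collinear intersecting segments), so distinctness genuinely yields a triangle and hence a circuit; the chord-splitting step correctly identifies both common points with the unique point of $t_i\cap t_j$; and in the chordless case the absence of chords is precisely what answers your own worry about degeneracy, since non-consecutive pieces of the traced curve lie on non-intersecting segments and consecutive pieces share only the point $q_i$, so the closed curve is simple and $c(M)=0$ is violated. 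One cosmetic omission: your final step presumes the non-cut vertex $s$ has two distinct neighbors in $H$; if $\deg_H(s)=1$ the conclusion is immediate (one intersecting segment, hence one intersection point, as distinct segments meet at most once), and $\deg_H(s)=0$ cannot occur in a nontrivial connected $M$. As for what each approach buys: the paper's argument is shorter and needs nothing beyond uniqueness of paths in trees, while yours, at the cost of some topological bookkeeping, extracts a reusable lemma of independent interest -- in a segment forest the cycles (indeed the blocks) of the intersection graph consist of concurrent segments -- which is a sharper statement than the corollary itself requires.
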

\proof
By Proposition \ref{prop1}, there are two segments $s_a$ and $s_b$ such that $M\backslash s_a$ and $M\backslash s_b$ are connected; we claim that each of these segments contains a single intersection point. Indeed, since $M$ is a segment tree and is therefore connected, $s_a$ and $s_b$ must each contain at least one intersection point. Suppose for contradiction that $s_a$ contains two (or more) intersection points $x$ and $y$. Since $M$ is a segment tree, there is only one path, namely along $s_a$, between the segments which intersect $s_a$ at $x$ and the segments which intersect $s_a$ at $y$. Then, there will be no path between these segments in $M\backslash s_a$, a contradiction.
\qed
\vspace{9pt}

\section{Bounds on $p$ and $c$ }
\label{section_bounds}

In this section, we derive tight bounds on the number of intersection points and circuits in certain families of segment sets as a function of the number of segments.

\subsection{Segment Halin graphs}

A \emph{Halin graph} is a graph that can be obtained by starting from a tree with no vertices of degree two which is embedded in the plane, and connecting the leaves of this tree in a cycle according to their clockwise ordering specified by the embedding. Halin graphs have a unique embedding up to the choice of which face is the outer face. A \emph{segment Halin graph} is a set of segments $M$ satisfying the following two properties: 1) $G_M$ is a Halin graph; 2) in the embedding of $G_M$ induced by $M$, the edges of the outer face constitute the cycle used in the construction of $G_M$. See \cite{Halin1,Halin4,Halin3,Halin2} for some applications and algorithmic aspects of Halin graphs.

\begin{theorem}
Let $M$ be a segment Halin graph. Then
\[p(M)\geq\left\lceil\frac{m(M)+2}{2}\right\rceil,\]
\[c(M)\geq\left\lceil\frac{m(M)+3}{3}\right\rceil,\]
and these bounds are tight. 
\end{theorem}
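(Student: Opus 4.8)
The plan is to reduce the whole statement to three structural facts about $G_M$ and then combine them with the trivial inequality $m\le e$. First I would record that, since $G_M$ is a Halin graph, it is connected and has minimum degree $3$: every internal vertex of the underlying tree keeps degree $\geq 3$, and every leaf acquires degree $1+2=3$ once the cycle is added. In particular $G_M$ has no vertex of degree $1$, so no endpoint of a segment can fail to lie on a second segment; hence every endpoint is an intersection point, $J(M)\subseteq P(M)$, and $n=p$. Applying Euler's formula to the connected plane graph $G_M$ then gives $e=n+c-1=p+c-1$. Finally, since distinct segments occupy disjoint nonempty sets of edges of $G_M$, we have $m\le e$, so $m\le p+c-1$.

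Next I would extract the two inequalities that turn this single relation into the two claimed bounds, using the tree $T$ from which $G_M$ is built. Writing $\ell$ for the number of leaves of $T$ and $I$ for its number of internal vertices, the Halin construction gives $p=n=|V(T)|=\ell+I$, and Euler applied to the abstract Halin graph (edge count $|V(T)|-1+\ell$) gives $c=\ell$. The crucial point is the standard tree identity $\sum_{v\text{ internal}}(\deg_T(v)-2)=\ell-2$; since $T$ has no vertex of degree $2$, each internal vertex contributes at least $1$, whence $1\le I\le \ell-2$. Rewriting these bounds on $I$ in terms of $p$ and $c$ yields exactly $c\le p-1$ and $p\le 2c-2$.

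Combining $m\le p+c-1$ with $c\le p-1$ gives $m\le 2p-2$, i.e. $p\ge (m+2)/2$, and integrality promotes this to $p\ge\lceil (m+2)/2\rceil$; combining instead with $p\le 2c-2$ gives $m\le 3c-3$, i.e. $c\ge (m+3)/3$ and hence $c\ge\lceil (m+3)/3\rceil$. I expect the main obstacle to be the bookkeeping that pins down $n=p$ together with the two parameter identities $p=\ell+I$ and $c=\ell$ precisely, i.e. making sure the abstract Halin data and the geometric quantities $p,c,m$ really coincide, rather than the final arithmetic, which is immediate. The degree-sum identity forcing $I\le\ell-2$ is exactly what makes the $c$-bound possible (the naive planar bound $e\le 3p-6$ would be far too weak), so that is the step I would isolate and state with care.

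For tightness I would exhibit explicit segment Halin graphs attaining equality. Drawing the wheel $W_\ell$ (the Halin graph of the star $K_{1,\ell}$) so that no two edges are collinear makes every edge its own segment, giving $m=e=2\ell$, $p=\ell+1$, $c=\ell$, so that $p=\lceil (m+2)/2\rceil$; making one opposite pair of spokes collinear lowers $m$ by one and settles the odd case. Drawing a cubic Halin graph (all internal degrees equal to $3$, so $I=\ell-2$) with all edges as distinct segments gives $m=e=3\ell-3$, $c=\ell$, so that $c=\lceil (m+3)/3\rceil$; merging one or two collinear edge-pairs lowers $m$ without changing $c$ and covers the remaining residues modulo $3$. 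The only thing to verify in these constructions is that the chosen straight-line drawing introduces no unintended collinearities that would merge segments and spoil the count.
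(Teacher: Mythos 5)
Your proposal is correct and takes essentially the same route as the paper's proof: both arguments rest on $p=n$ (Halin graphs have no degree-$1$ vertices), the Halin counts $e=|V(T)|-1+\ell$ and $c=\ell$, the degree-sum bound on the tree forcing $1\le I\le \ell-2$ (the paper's $|V(T)|\le 2\ell(T)-2$), and $m\le e$, so your Euler-formula packaging $m\le p+c-1$ combined with $c\le p-1$ and $p\le 2c-2$ is the paper's computation rearranged. Your tightness constructions (noncollinear wheels with a collinear spoke pair for odd $m$, and cubic Halin graphs with merged collinear edge-pairs in place of the paper's double-polygon construction for the $c$-bound) differ in detail but use the same collinearity trick and are valid, modulo the drawing verifications you already flag.
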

\proof
Halin graphs do not have any leaves, so for any segment Halin graph $M$, $J\subset P$, and hence $p(M)=n(M)$. Furthermore, for any Halin graph $G$, $|E(G)|=|V(G)|-1+\ell(G)$, where $\ell(G)$ is the number of leaves of the tree used in the construction of $G$. Thus for any segment Halin graph $M$, $e(M)\leq 2(p(M)-1)$, so
\[p(M)\geq\frac{e(M)+2}{2}\geq\frac{m(M)+2}{2}.\]
Since $m(M)$ is an integer, this bound can be tightened to
$p(M)\geq\left\lceil\frac{m(M)+2}{2}\right\rceil$. This bound holds with equality when $m$ is even and $m\geq 6$ for the set of segments formed by the edges of a straight-line noncollinear embedding of a wheel graph on $\frac{m}{2}+1$ vertices; see Figure \ref{halinfigure1}, left. When $m$ is odd and $m\geq 7$, equality can be achieved by a similar construction, shown in Figure \ref{halinfigure1}, right.  Since there are no segment Halin graphs with fewer than six segments, the bound on $p(M)$ is tight for all $m$.


\begin{figure}[h]
\begin{center}
\includegraphics[scale=0.2]{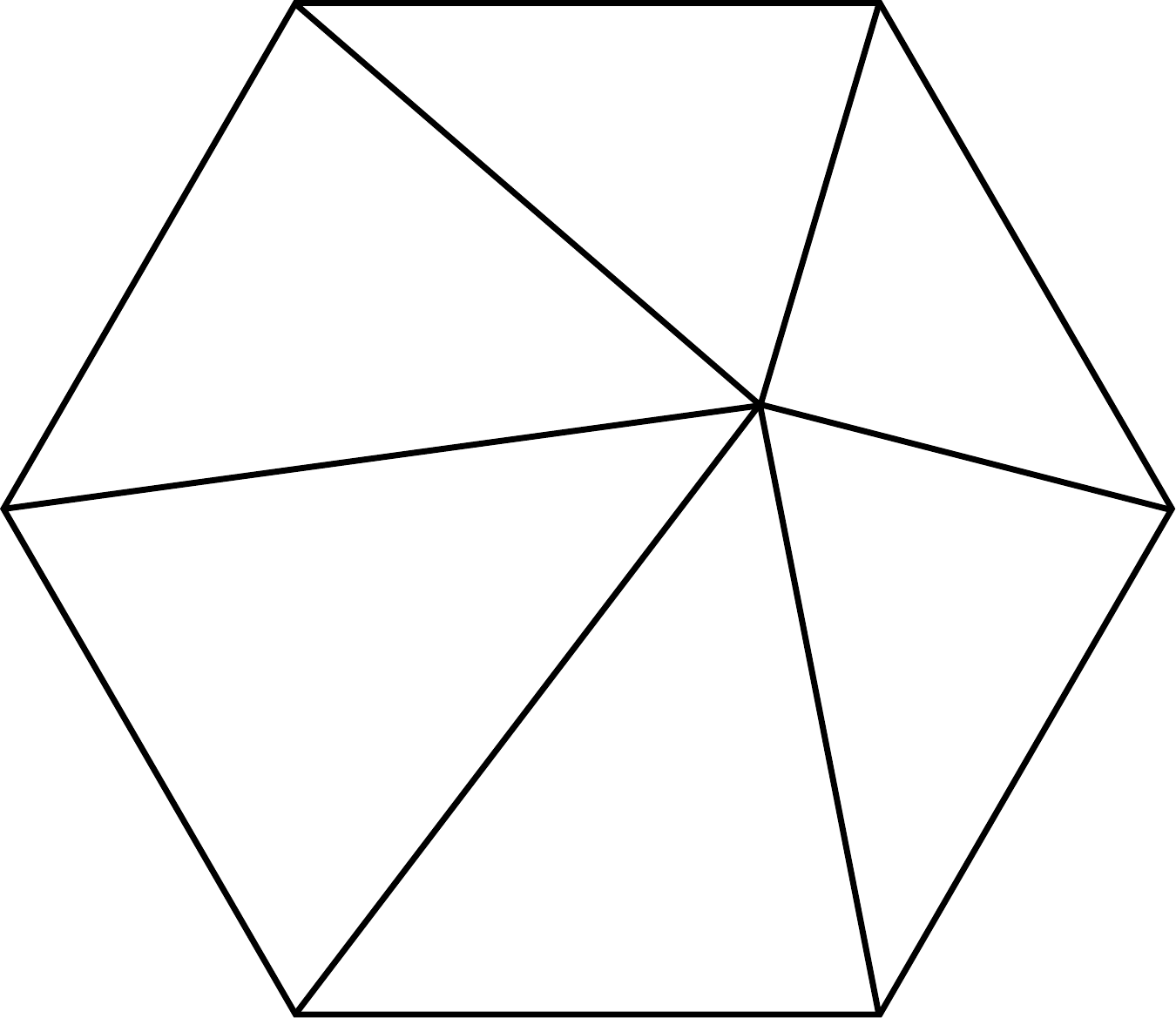} \qquad\qquad\qquad\quad
\includegraphics[scale=0.2]{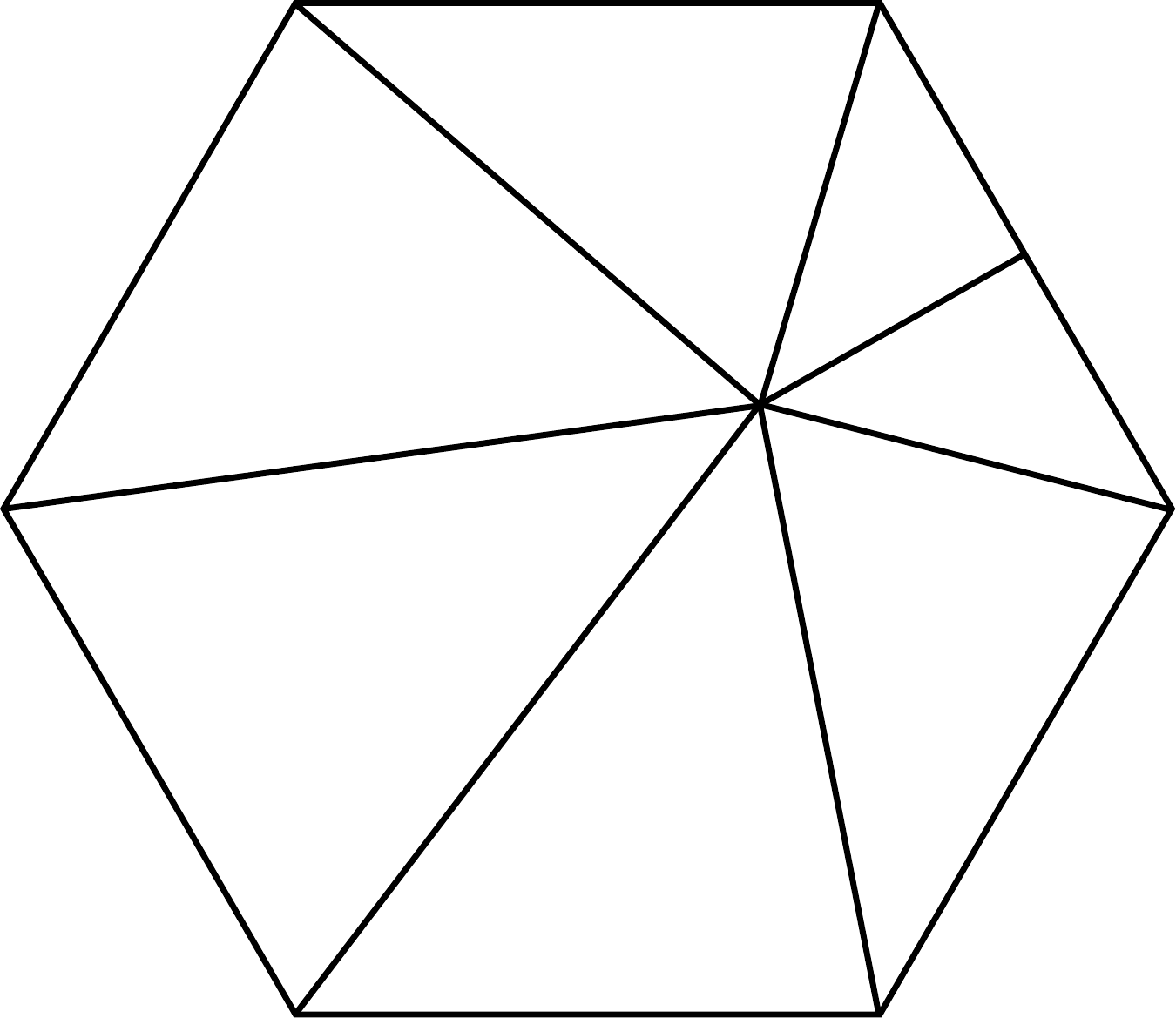} 
\end{center}
\caption{Constructions demonstrating the tightness of the lower bound on $p(M)$ for segment Halin graphs. On the left, $m(M)=12$; on the right, $m(M)=13$.}
\label{halinfigure1}
\end{figure}

Let $M$ be a segment Halin graph, $T$ be the tree used in the construction of $G_M$, and $\ell(T)$ be the number of leaves of $T$. Then, $|E(G_M)|=|E(T)|+|E(G_M)\backslash E(T)|=|V(T)|-1+\ell(T)$. 
Moreover, since by definition $T$ has no degree 2 vertices, its $\ell(T)$ leaves have degree 1, and its $|V(T)|-\ell(T)$ non-leaf vertices have degree at least 3.  Then,
\[2(|V(T)|-1)=2|E(T)|=\sum_{v\in V(T)}deg(v)\geq\ell(T)+3(|V(T)|-\ell(T))=
3|V(T)|-2\ell(T).\]
Solving for $|V(T)|$, we obtain $|V(T)|\leq 2\ell(T)-2$.
Moreover, the number of bounded faces of $G_M$ equals $\ell(T)$; thus, combining the inequalities above, we have
\[m(M)\leq |E(G_M)|\leq 3\ell(T)-3=3c(M)-3.\]
Solving for $c(M)$, we obtain $c(M)\geq\frac{m(M)+3}{3}$, and since $m(M)$ is an integer, this bound can be tightened to 
$c(M)\geq\left\lceil\frac{m(M)+3}{3}\right\rceil$.

The bound on $c(M)$ holds with equality when $m \equiv 0\ (\text{mod}\ 3)$ for the following construction, shown in Figure \ref{halinfigure2}, left: let $t=\frac{m}{3}+1$ and draw a regular $t$-gon $S$ with intersection points $p_1,\ldots,p_{t}$ in clockwise order, and a larger, dilated concentric copy of $S$ with intersection points $p_1',\ldots,p_{t}'$ in clockwise order; delete the segments $\overline{p_1p_2}$, $\overline{p_2p_3}$, $\overline{p_3p_4}$, and add the segments $\overline{p_1p_2'}$, $\overline{p_4p_3'}$, $\overline{p_1p_1'}$, and $\overline{p_kp_k'}$ for $4\leq k\leq t$. Similar constructions can be used in the cases when $m \equiv 2\ (\text{mod}\ 3)$ and $m \equiv 1\ (\text{mod}\ 3)$; see Figure \ref{halinfigure2}, center, and Figure \ref{halinfigure2}, right, respectively. Thus, the  bound on $c(M)$ is tight for all $m$.
\qed

\begin{figure}[h]
\begin{center}
\includegraphics[scale=0.2]{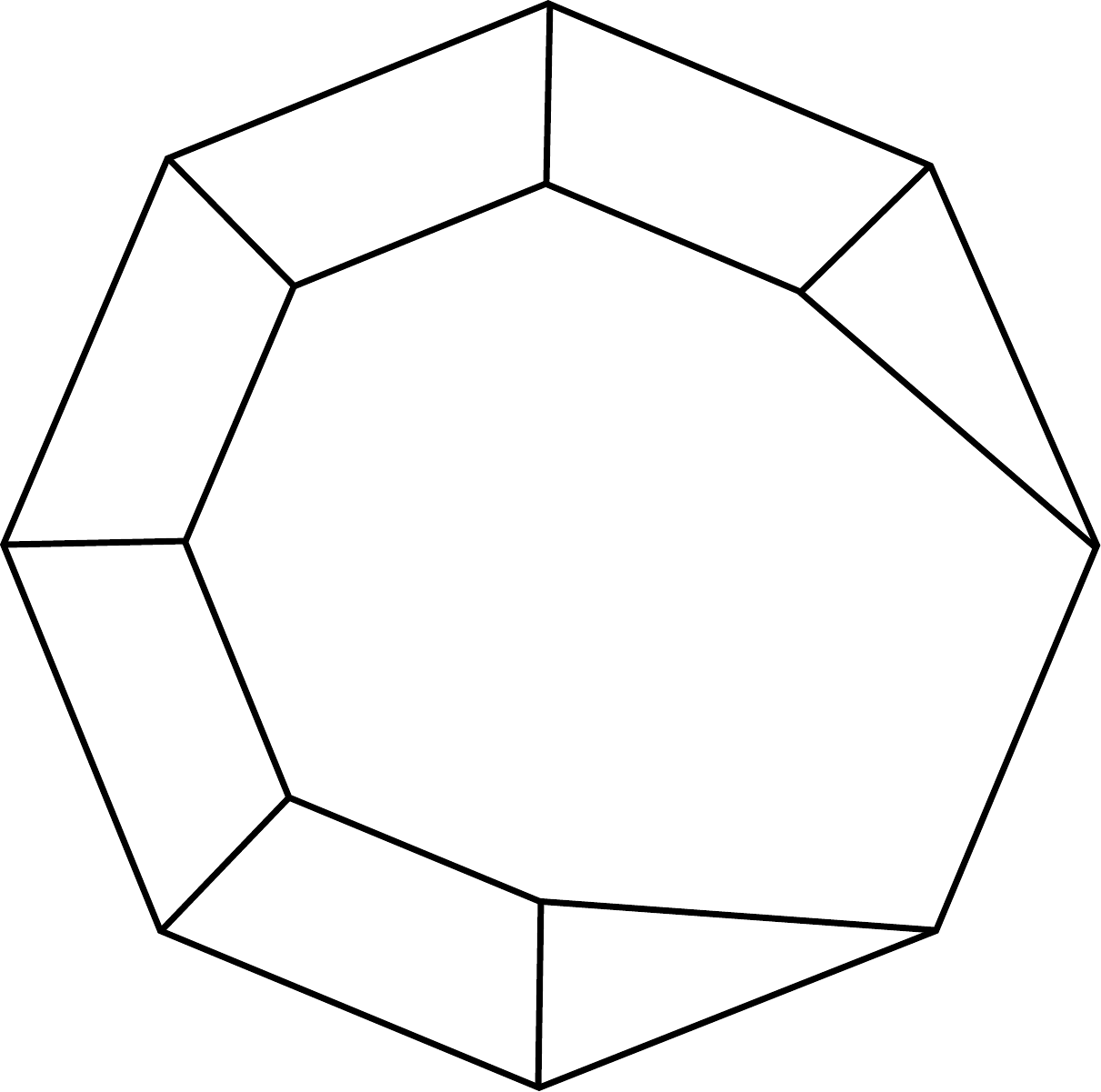} \qquad\qquad
\includegraphics[scale=0.2]{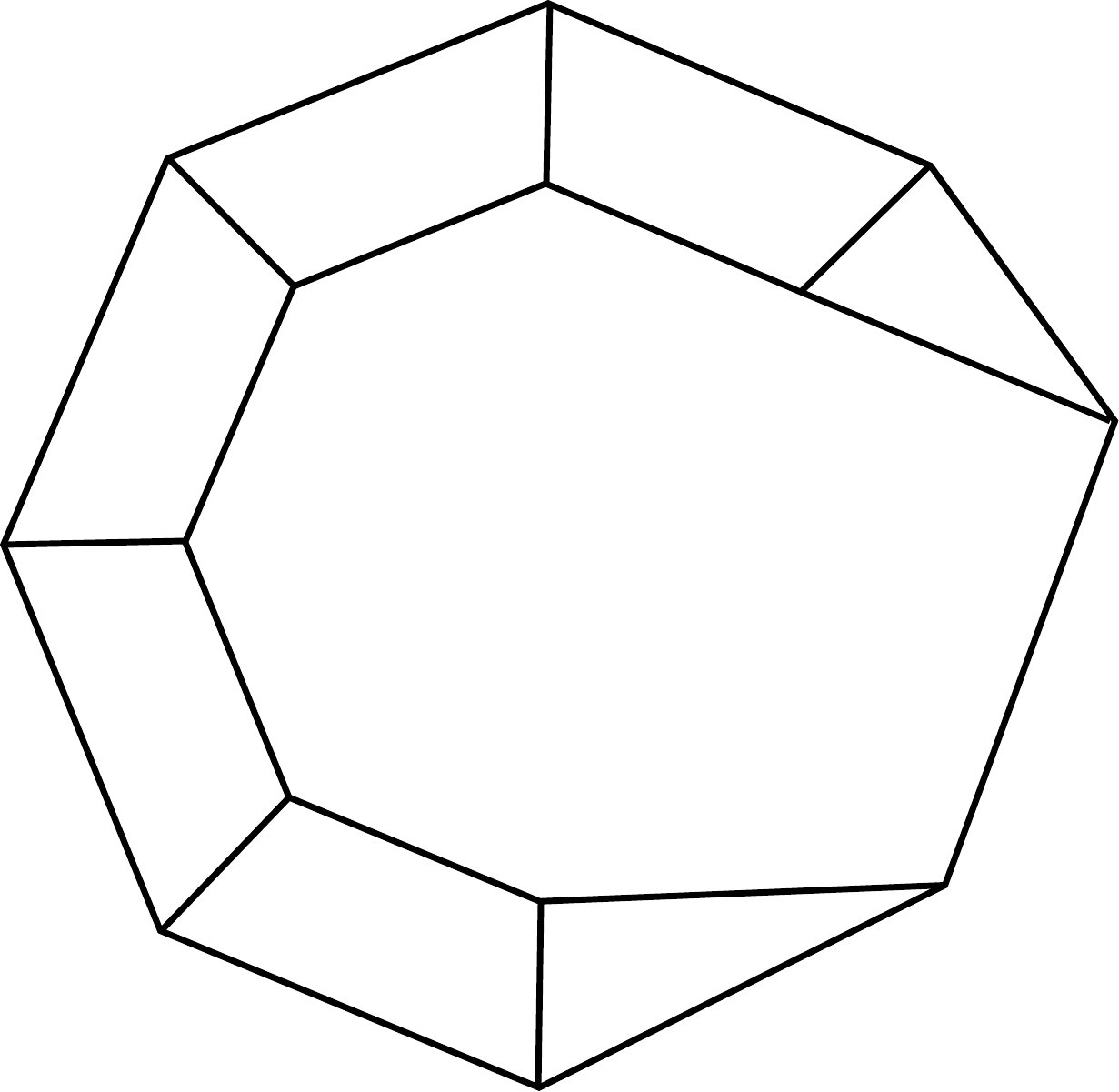} \qquad\qquad
\includegraphics[scale=0.2]{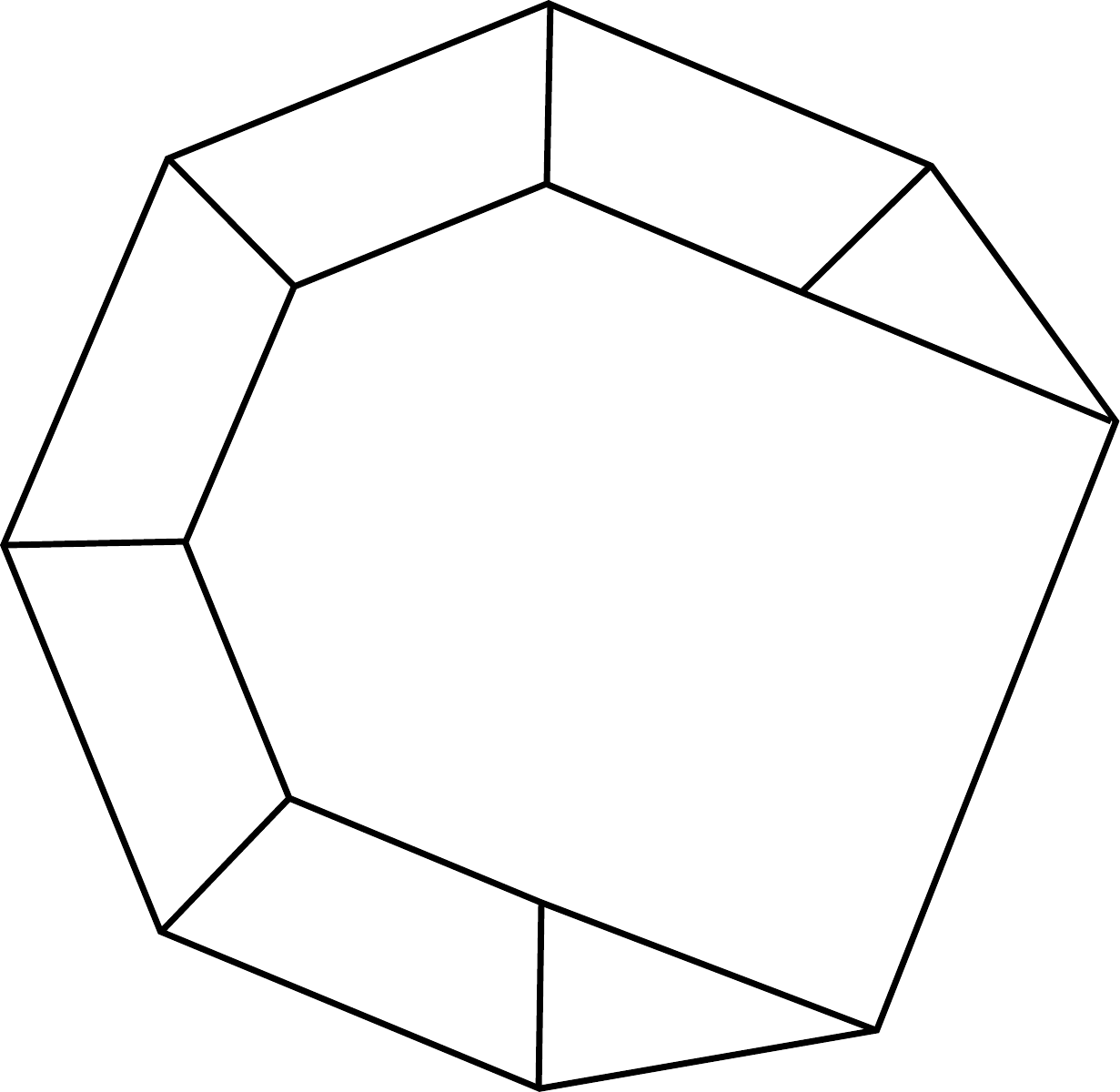} 
\end{center}
\caption{Constructions demonstrating the tightness of the lower bound on $c(M)$ for segment Halin graphs. On the left, $m(M)=21$; in the center, $m(M)=20$; on the right, $m(M)=19$.}
\label{halinfigure2}
\end{figure}

\begin{theorem}
Let $M$ be a segment Halin graph. Then
\[p(M)\leq 3m(M)-11,\]
\[c(M)\leq 2m(M)-6,\] 
and these bounds are tight.
\end{theorem}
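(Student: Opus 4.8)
The plan is to recast both upper bounds as lower bounds on $m(M)$, and to control $m$ through the single quantity $X=\sum_{v}a_v$, where $a_v$ is the number of segments of $M$ that pass straight through a vertex $v$ (equivalently, the number of collinear edge-pairs at $v$). As in the proof of the previous theorem, $p=n$, and the Halin structure gives $e=p+c-1$ (from $|E(G_M)|=|V(T)|-1+\ell(T)$, together with $|V(T)|=n=p$ and $c=\ell(T)$). A segment carrying $t$ vertices contributes $t-1$ edges and $t-2$ pass-through incidences, so summing over segments gives $e=m+X$, i.e. $m=e-X=(p+c-1)-X$. Hence it suffices to bound $X$ from above: the estimate $X\le p+\tfrac{c}{2}-4$ will yield $c\le 2m-6$, and a refinement of it will yield $p\le 3m-11$.

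First I would bound the leaf contribution to $X$. Every leaf of the tree $T$ is a degree-$3$ vertex of $G_M$, so at most one of its three edge-pairs can be collinear and $a_v\le 1$ there. The key geometric input is the convex hull of the drawing: all extreme points lie on the outer face, which is the cycle $C$, so every hull vertex is a leaf; and at a strict hull vertex no two incident edges can be collinear, since two collinear edges would be opposite rays while all neighbors lie in an open half-plane. A nondegenerate drawing has at least three hull vertices, so at least three leaves have $a_v=0$, whence $\sum_{v\in C}a_v\le \ell-3=c-3$.

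Next I would bound the internal contribution. Since $a_v\le\lfloor d_v/2\rfloor$ and $\sum_{v\in C}d_v=3c$, we have $\sum_{v\ \mathrm{int}}d_v=2e-3c=2p-c-2$, so $\sum_{v\ \mathrm{int}}a_v\le p-\tfrac{c}{2}-1$. Adding the two estimates gives $X\le p+\tfrac{c}{2}-4$, and substituting into $m=p+c-1-X$ gives $m\ge\tfrac{c}{2}+3$, i.e. $c\le 2m-6$. For the bound on $p$ I would sharpen the internal estimate by tracking the number $o$ of odd-degree internal vertices: since internal vertices have degree at least $3$ and the even ones have degree at least $4$, one gets both $\sum_{v\ \mathrm{int}}a_v\le p-\tfrac{c}{2}-1-\tfrac{o}{2}$ and $o\ge 2p-3c+2$, which together with the leaf estimate force $m\ge p-c+4$, i.e. $p\le m+c-4$.

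The main obstacle is pinning down the exact additive constant in the bound on $p$. Combining $p\le m+c-4$ with $c\le 2m-6$ in the obvious way yields only $p\le 3m-10$; the two constituent inequalities cannot both be tight at once, and extracting the last unit requires ruling out the borderline configuration in which every off-hull leaf has its two cycle edges collinear and every internal vertex attains the maximum $\lfloor d_v/2\rfloor$ of collinear pass-throughs. I expect this to be impossible for a genuinely geometric reason — a vertex in the interior of the drawing cannot have two of its edges collinear with two points of the outer boundary while staying inside, as already occurs for the center of $W_4$ — so the plan is to isolate at least one further forced non-collinearity in any would-be extremal drawing, improving $3m-10$ to $3m-11$. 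Finally, I would verify tightness of both bounds by exhibiting explicit wheel- and nested-polygon-type families (analogous to the constructions accompanying the lower-bound theorem) attaining equality in each residue class of $m$.
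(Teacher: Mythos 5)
Your accounting framework is sound and genuinely different from the paper's: the identity $m=e-X=(p+c-1)-X$ with $X=\sum_v a_v$, the hull argument forcing at least three cycle vertices with $a_v=0$, and the degree-parity refinement are all correct, and they yield complete proofs of $c\le 2m-6$ and of $p\le m+c-4$ (I checked the arithmetic: $\sum_{\mathrm{int}}d_v=2p-c-2$, $o\ge 2p-3c+2$, hence $m\ge p-c+4$). The paper instead argues combinatorially that at least $3$ segments lie entirely on the outer cycle $\mathcal{C}$ (a segment can pass from $\mathcal{C}$ into the tree $T$ only at a concave vertex, and a polygon has at least $3$ convex vertices), giving $m(T)\le m(M)-3$, and then bounds $p\le 3m(T)-1$ and $c=\ell(T)\le 2m(T)$; your route buys a cleaner, purely incidence-theoretic derivation of the $c$ bound, while the paper's buys direct access to the structure of the extremal configuration.

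However, there is a genuine gap, and it sits exactly at the crux of the theorem: the improvement from $p\le 3m-10$ to $p\le 3m-11$. You explicitly defer it (``I expect this to be impossible\dots the plan is to isolate at least one further forced non-collinearity''), and the heuristic you offer is not a valid statement as written: an interior vertex certainly can have two of its edges collinear --- indeed two segments crossing at an interior degree-$4$ vertex realize $a_v=\lfloor d_v/2\rfloor=2$, and such vertices occur in the actual extremal examples. What must be ruled out is a global configuration, and the paper does this concretely: if $p=3m-10$, then equality throughout forces exactly $3$ convex vertices $a_1,a_2,a_3$ on $\mathcal{C}$, forces every segment of $T$ to have both endpoints at leaves on $\mathcal{C}$ with no three segments of $T$ concurrent, and then the segment $s_i$ of $T$ ending at $a_i$ must have its other endpoint on the arc of $\mathcal{C}$ opposite $a_i$ (it cannot end at a concave or straight vertex); hence $s_1,s_2,s_3$ pairwise intersect inside $\mathcal{C}$, creating either a triangle in $T$ (contradicting acyclicity) or a triple point (contradicting the forced equalities). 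Some argument of this kind --- translated into your language as a forced strict inequality in one of your equality conditions --- is required, not merely expected. Separately, tightness is asserted but not exhibited: a wheel-type family attains the \emph{lower} bounds ($p\approx m/2$), not $p=3m-11$ and $c=2m-6$; the paper supplies a specific family (Figure \ref{halinfigure3}, right) achieving both upper bounds simultaneously for all $m\ge 6$, and your proof would need an explicit analogue.
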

\proof
Let $T$ be the tree used in the construction of $G_M$, let $\mathcal{C}$ be the cycle used in the construction of $G_M$ equipped with the embedding induced by $M$, and let $\ell(T)=|\mathcal{C}|$ denote the number of leaves of $T$ (equivalently, the order of $\mathcal{C}$). We will call vertices of $\mathcal{C}$ \emph{convex}, \emph{straight}, and \emph{concave} if their interior angle with respect to the embedding of $\mathcal{C}$ is respectively less than $\pi$, equal to $\pi$, and greater than $\pi$. Given a polygon with $k$ vertices, the sum of the interior angles of the vertices is $(k-2)\pi$. Thus, the polygon must contain at least 3 convex vertices, since if at most 2 of its $k$ vertices are convex, the sum of the interior degrees of the vertices would be greater than $(k-2)\pi$, a contradiction. 

We will first show that there are at least 3 segments in $M$ which belong exclusively to $\mathcal{C}$ and not to $T$. 
If a segment of $M$ belongs to both $\mathcal{C}$ and $T$, it must pass between $\mathcal{C}$ and $T$ at a concave vertex of $\mathcal{C}$. Moreover, since only leaves of $T$ touch $\mathcal{C}$, at most one of the segments that meet at a concave vertex can pass from $\mathcal{C}$ to $T$ at that vertex. Since $\mathcal{C}$ (equipped with its embedding induced by $M$) is a polygon, $\mathcal{C}$ must contains at least 3 convex vertices. Suppose $\mathcal{C}$ contains $r$ straight vertices. Then, the number of non-straight vertices of $\mathcal{C}$ is $|\mathcal{C}|-r$, and hence the number of segments that make up $\mathcal{C}$ is $|\mathcal{C}|-r$. Since at least 3 of the vertices of $\mathcal{C}$ are convex, at most $|\mathcal{C}|-r-3$ of the vertices of $\mathcal{C}$ are concave, so there are at most $|\mathcal{C}|-r-3$ places where a segment can pass from $T$ to $\mathcal{C}$. Then, at most $|\mathcal{C}|-r-3$ of the segments of $\mathcal{C}$ pass into $T$. Every segment that does not pass into $T$ is a segment that is contained entirely in $\mathcal{C}$. Thus, at least $3$ segments belong to $\mathcal{C}$ but not $T$, so 
\begin{equation}
m(T)\leq m(M)-3.
\label{eq_halin}
\end{equation}
Using the inequality in \eqref{eq_halin} and the fact that Halin graphs have no leaves, we have
\[p(M)=n(M)=n(T)\leq p(T)+j(T)\leq m(T)-1+2m(T)=3m(T)-1\leq 3(m(M)-3)-1.\] 
Suppose that $p(M)=3m(M)-10$. This can happen only if all of the following hold: 
\begin{enumerate}
\item[1)] $n(M)=p(T)+j(T)$,
\item[2)] $p(T)+j(T)=3m(T)-1$, 
\item[3)] $m(T)=m(M)-3$.  
\end{enumerate}

\noindent Equality 3) implies that exactly 3 segments of $M$ are exclusively in $\mathcal{C}$ and not in $T$. Hence, $|\mathcal{C}|-r-3$ of the segments of $\mathcal{C}$ must pass into $T$. Since segments can only pass between $\mathcal{C}$ and $T$ at a concave vertex of $\mathcal{C}$, and since at most one of the segments that meet at a concave vertex can pass from $\mathcal{C}$ to $T$ at that vertex, it follows that $|\mathcal{C}|-r-3$ of the non-straight vertices of $\mathcal{C}$ must be concave. Thus, there are exactly 3 convex vertices in $\mathcal{C}$. 

Let the convex vertices of $\mathcal{C}$ be $a_1,a_2,a_3$, and let $s_i$ be the segment of $T$ which has an endpoint at $a_i$, $1\leq i\leq 3$. Let $A_{12}$ be the path in $\mathcal{C}$ between $a_1$ and $a_2$ that does not pass through $a_3$; define $A_{13}$ and $A_{23}$ analogously. Equalities 1) and 2) imply that no intersection point of $T$ is also an endpoint of a segment of $T$, that no three segments of $T$ intersect in the same point, and that both endpoints of each segment of $T$ are leaves of $T$ and touch $\mathcal{C}$.
Since all vertices on $A_{12}$ and $A_{13}$ are concave or straight, $s_1$ cannot have its other endpoint on $A_{12}$ or $A_{13}$; thus, it must be in $A_{23}$. Similarly, $s_2$ must have its other endpoint in $A_{13}$. Thus, the segments $s_1$ and $s_2$ intersect in a point $x$ in the interior of $\mathcal{C}$. Moreover, $s_3$ must have its other endpoint in $A_{12}$, and must therefore intersect $s_1$ and $s_2$; see Figure \ref{halinfigure3}, left, for an illustration. However, if $s_3$ does not pass through $x$, then the segments $s_1,s_2,s_3$ form a triangle in the interior of $\mathcal{C}$; this triangle must be part of $T$, contradicting the fact that $T$ is a tree. On the other hand, if $s_3$ passes through $x$, this contradicts the fact that no three segments of $T$ intersect in the same point. Thus, 1), 2), and 3) cannot all hold at the same time, so $p(M)<3m(M)-10$. Since $p(M)$ is an integer, it follows that $p(M)\leq 3m(M)-11$. 

The number of bounded faces of $G_M$ equals $\ell(T)$; thus, again using the inequality \eqref{eq_halin}, we have $c(M)=\ell(T)\leq 2m(T)\leq 2(m(M)-3)$. The upper bounds on $p$ and $c$ are tight for all $m\geq 6$ for the family of segment Halin graphs shown in Figure \ref{halinfigure3}, right. 
\qed

\begin{figure}[h]
\begin{center}
\includegraphics[scale=0.2]{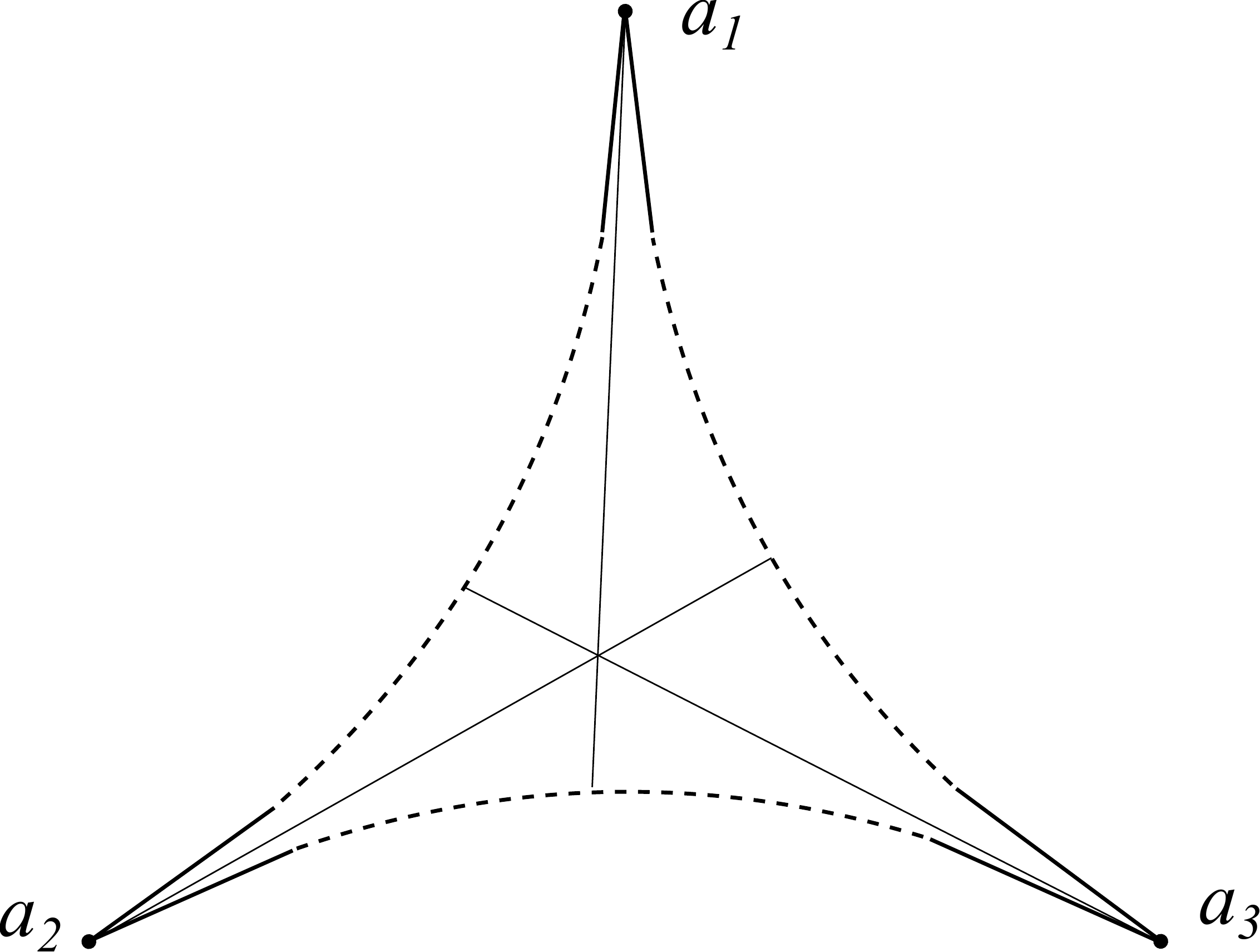}\hspace{100pt}
\includegraphics[scale=0.3]{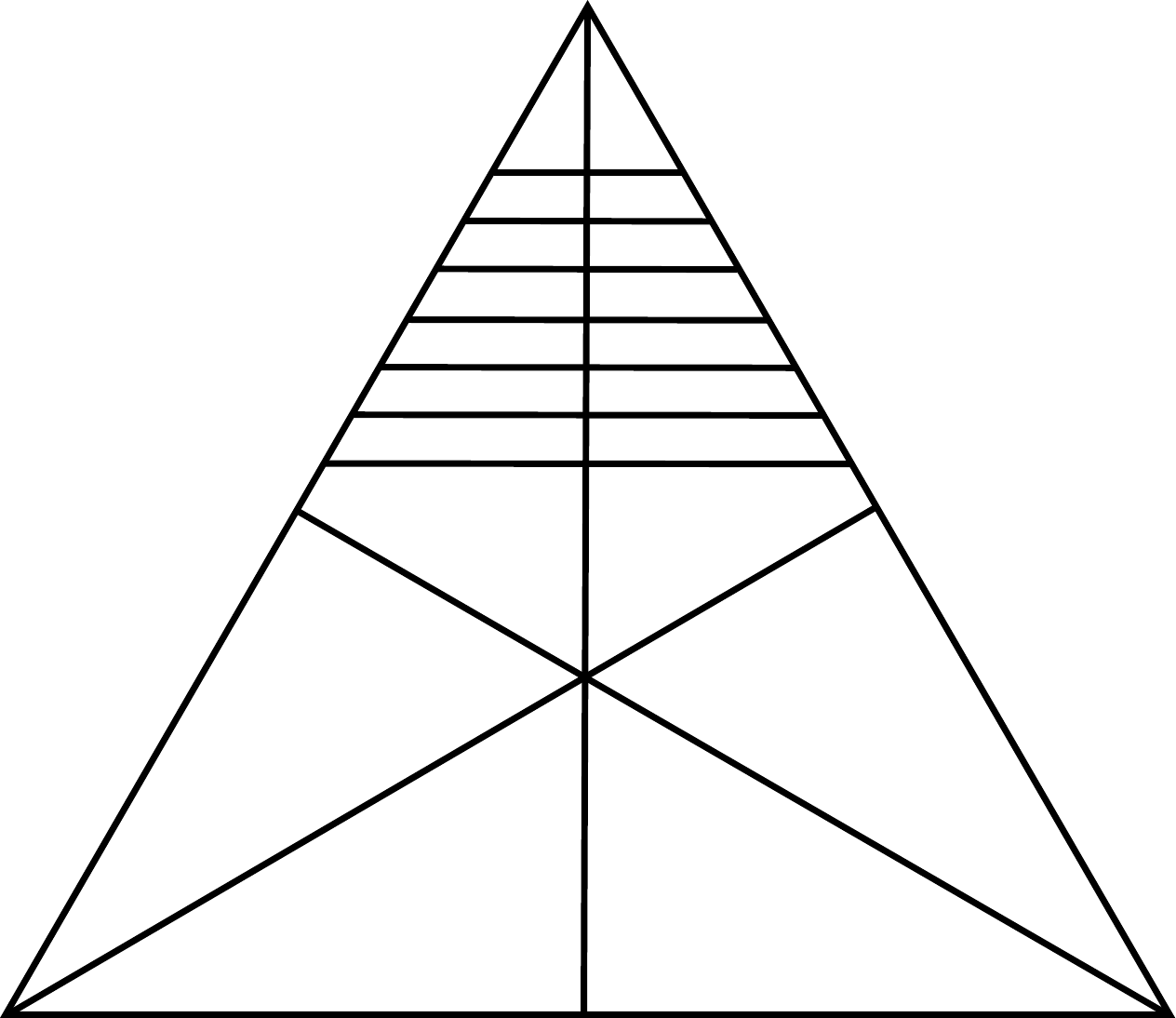}
\end{center}
\caption{\emph{Left}: The outer face of a Halin graph with exactly 3 convex points. \emph{Right}: Construction demonstrating the tightness of the upper bounds on $p(M)$ and $c(M)$ for segment Halin graphs.}
\label{halinfigure3}
\end{figure}

\subsection{Segment cactus graphs}

A graph $G$ is called a \emph{cactus} if any two cycles of $G$ have at most one vertex in common. Every edge of a cactus graph belongs to at most one cycle, and the biconnected components of a cactus graph are either cycles or single edges. By definition, two circuits of a segment cactus can have at most one vertex in common, i.e., they cannot share a portion of a segment different from a point. Properties of cactus graphs have been studied with some applications in mind; for example, cactus graphs arise in the design of telecommunication systems, material handling networks, and local area networks (cf. \cite{moshe,brimkov_cactus,harary,husimi,kariv,koontz} and the bibliographies therein). 

\begin{proposition}
A segment cactus $M$ with $c \geq 1$ circuits contains at least two segments $s_1$ and $s_2$, such that for $i\in \{1,2\}$,

\begin{itemize}
\item[$A)$] $s_i$ belongs to a single circuit segment set $S_i$,
\item[$B)$] the connected components of $M\backslash s_i$ which do not contain segments of $S_i$ are segment trees.
\end{itemize}
\label{main_prop}
\end{proposition}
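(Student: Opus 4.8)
The plan is to locate two \emph{extremal} circuits and extract one suitable segment from each. Throughout I assume $M$ is connected, which is the only setting in which the statement can hold (if two components carried circuits, no single segment removal could turn all foreign circuits into trees). First I would encode the coarse structure of $M$ in the block-cut tree of $G_M$, whose blocks are, by the cactus property, either bridges or cycles, the cycle-blocks being exactly the cycles supporting the circuit segment sets. If $c=1$ the claim is almost immediate: the unique circuit $C_1$ has at least three boundary segments, each lying in the single circuit segment set $S_1$, and since there are no other circuits, condition $B)$ is vacuous, so any two of them serve as $s_1,s_2$. If $c\geq 2$, I would take the minimal subtree $\mathcal{S}$ of the block-cut tree that spans all cycle-blocks; being a tree on at least two nodes, $\mathcal{S}$ has at least two leaves, and every leaf of a minimal spanning subtree lies in the spanned set, so at least two leaves of $\mathcal{S}$ are cycle-blocks. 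These two leaf cycle-blocks are the extremal circuits $C_1,C_2$ I will use.

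Fix one extremal circuit $C_i$. As a leaf of $\mathcal{S}$ it meets the rest of $\mathcal{S}$ in a single cut vertex $v_i^{*}$, and every circuit other than $C_i$ is reached from $C_i$ only through $v_i^{*}$; any other cut vertices of $C_i$ lead into bridge-blocks and hence into circuit-free (tree) parts of $M$. Since a bounded face needs at least three boundary segments and at most two of them are incident to $v_i^{*}$, I can choose a boundary segment $s_i$ of $C_i$ that is not incident to $v_i^{*}$. To verify condition $A)$, suppose $s_i$ also lay in the circuit segment set of some $C'\neq C_i$. Then $s_i$, a connected path in $G_M$, would contain boundary edges of both $C_i$ and $C'$, which sit on opposite sides of the cut vertex $v_i^{*}$; a connected set meeting both sides of a cut vertex must contain it, contradicting the choice of $s_i$. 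Hence $s_i$ belongs to the single circuit segment set $S_i$.

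For condition $B)$, I would remove $s_i$ (delete its edges from $G_M$) and track the circuits. Because $s_i$ is not incident to $v_i^{*}$, the two cycle-edges of $C_i$ at $v_i^{*}$ survive, so $v_i^{*}$ remains in a component containing segments of $S_i$. Every circuit $C'\neq C_i$ connects to $v_i^{*}$ by a path lying on the far side of $v_i^{*}$ from $C_i$, which therefore avoids the edges of $s_i$, so each such $C'$ stays in the component of $v_i^{*}$. Since $C_i$ itself is destroyed by deleting $s_i$, every surviving circuit lies in a component that contains $S_i$-segments, whence every component avoiding $S_i$ is circuit-free, i.e.\ a segment tree. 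Applying the argument to $C_1$ and $C_2$ yields the two required segments.

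The step I expect to be the main obstacle is condition $A)$, precisely because a single maximal segment of $M$ may contribute sub-segments to several different circuits at once (lying in several circuit segment sets), so ``bordering $C_i$'' does not by itself force ``bordering only $C_i$.'' The way around this is the cut-vertex separation above, which rests on the defining cactus property that two circuits share at most one vertex; making that argument rigorous -- in particular identifying $v_i^{*}$ as the unique gateway to all foreign circuits and checking that the chosen $s_i$ can be taken disjoint from it -- is where the real work lies. The degenerate cases $c=1$ and disconnected $M$ should be disposed of first, so that the extremal-circuit machinery only runs when it is genuinely needed.
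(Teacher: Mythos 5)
Your proof is correct, and while it shares the paper's pivotal step, it reaches it by different machinery. Both arguments hinge on the same key observation: locate an ``outermost'' circuit $C_i$, note that the unique gateway cut vertex $v_i^*$ toward all other circuits lies on at most two segments of $S_i$ while $|S_i|\geq 3$, and choose $s_i\in S_i$ avoiding $v_i^*$. The paper manufactures its outermost circuits by segment-level surgery: it strips a maximal set $Q$ of non-circuit segments whose removal preserves connectivity, trims the remainder so that the resulting graph $G_{M''}$ is leafless, and takes two outer (leaf) blocks, which are then forced to be cycles; property $B)$ falls out of the maximality of $Q$, since components of $M\backslash s_i$ avoiding $S_i$ avoid all of $M'=M\backslash Q$ and hence sit inside the forest $Q$. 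You bypass the $Q$/trimming apparatus entirely by working in the block-cut tree of $G_M$ and taking two leaf cycle-blocks of the minimal subtree spanning all cycle-blocks, and you establish $B)$ by a direct separation argument (every surviving circuit remains attached to $v_i^*$, whose component retains $S_i$-segments since the two cycle edges of $C_i$ at $v_i^*$ survive). This is shorter and arguably cleaner than the paper's route, and your cut-vertex argument for $A)$ is sound; your assertion that at most two segments of $S_i$ contain $v_i^*$ is stated at the same level of rigor as the paper's corresponding claim, and is justified because a third segment of $S_i$ through $v_i^*$ would force two cycles of $G_M$ to share an edge, contradicting the cactus property.

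One small correction to your framing: the claim that connectivity is the \emph{only} setting in which the statement can hold is slightly too strong. You are right that the proposition fails whenever circuits occupy two or more connected components (a degenerate case the paper's proof also passes over silently), but it still holds for disconnected $M$ whose circuits all lie in a single component: the circuit-free components are themselves segment trees, so one applies the connected-case argument to the circuit-bearing component and the extra components cause no harm in $B)$. Your reduction to the connected case should be phrased this way; also, in your $c=1$ base case, condition $B)$ is not vacuous but trivially satisfied, since deleting any $s_1\in S_1$ destroys the unique circuit and leaves $M\backslash s_1$ circuit-free, so all of its components are segment trees.
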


\proof
If $c=1$, every segment in the single circuit segment set of $M$ satisfies properties $A)$ and $B)$; thus, assume henceforth that $c \geq 2$.

Let $Q=\{s_1,\ldots,s_q\}$ be a maximal set of segments of $M$ such that for $1\leq i\leq q$, $s_i$ does not belong to any circuit segment set of $M$, and $s_i$ is a segment whose deletion does not disconnect $M\backslash\{s_1,\ldots,s_{i-1}\}$. Let $M'=M\backslash Q$. By construction, $M$ and $M'$ have the same circuit segment sets; moreover, the connected components of $M\backslash M'$ (i.e. of $Q$) are segment trees. Hence, for any segment $s\in M'$, the connected components of $M\backslash s$ which do not contain segments of $M'$ are segment trees. Let $M''$ be the set of segments obtained by trimming $M'$ (in fact, $M''$ is identical to the set of segments obtained by trimming $M$). Note that $M$, $M'$, and $M''$ have the same circuits.

$G_{M''}$ has no leaves, since a leaf of $G_{M''}$ would have to be an endpoint of a segment in $M''$, and all endpoints of segments in $M''$ are also intersection points. Thus, all outer blocks of $G_{M''}$ (i.e., biconnected components with a single cut vertex) are cycles. Since $c\geq 2$ and since $M$ and $M''$ have the same circuits, it follows that $G_{M''}$ has at least two cycles; thus, $G_{M''}$ has at least two outer blocks which are cycles, say $C_1$ and $C_2$. Let $S_1$ and $S_2$ be the circuit segment sets in $M$ corresponding to $C_1$ and $C_2$, respectively. For $i\in\{1,2\}$, exactly two edges of $C_i$ in $G_{M''}$ are incident to the cut vertex $v_i$ of $C_i$; thus, in $M$, $v_i$ corresponds to an intersection point of at most two segments of $S_i$. Since $S_i$ contains at least three segments, there is a segment $s_i\in S_i$ which does not contain $v_i$ as an intersection point in $M$. Then, since $C_i$ is an outer cycle block, $s_i$ does not belong to any other circuit segment set of $M$, i.e., $s_i$ satisfies property $A)$. Furthermore, the connected components of $M\backslash s_i$ which do not contain segments of $S_i$ also do not contain segments of $M'$. However, as shown above, the connected components of $M\backslash s_i$ which do not contain segments of $M'$ are segment trees. Thus, $s_i$ satisfies property $B)$.
\qed

\begin{theorem}
Let $M$ be a segment cactus graph. Then
\begin{equation}
p(M) \leq 2(m(M)-k_1(M))-3k_2(M),
\label{eq99}
\end{equation}
\begin{equation}
c(M) \leq (m(M)-k_1(M))-2k_2(M),
\label{eq100}
\end{equation}
and these bounds are tight.
\label{thm1}
\end{theorem}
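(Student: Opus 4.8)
The plan is to establish both inequalities simultaneously by reducing to a single connected component and then inducting on the number of segments. Since every intersection point and every circuit lies inside one connected component, $p$, $c$, and $m$ are all additive over the components of $M$, while $k_1$ and $k_2$ merely count them. A trivial component has $m=1$, $p=0$, $c=0$, matching $2(1-1)-0=0$ and $(1-1)-0=0$ with equality. Hence it suffices to prove that every \emph{nontrivial connected} segment cactus $M$ satisfies $p(M)\le 2m(M)-3$ and $c(M)\le m(M)-2$; summing these over the $k_2$ nontrivial components and adjoining the $k_1$ trivial ones yields \eqref{eq99} and \eqref{eq100}.

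For a nontrivial connected cactus I would induct on $m$. If $c=0$, then $M$ is a segment tree and Observation \ref{obs2} gives $p\le m-1\le 2m-3$ and $c=0\le m-2$ for $m\ge 2$, settling the base cases. If $c\ge 1$, I first dispose of any \emph{leaf segment} $s$, i.e. a segment carrying exactly one intersection point: deleting $s$ keeps $M$ connected, destroys no circuit, and removes at most one point from $P$, so by the inductive hypothesis (applied to $M\backslash s$, which still has a circuit and is thus nontrivial) $p(M)\le p(M\backslash s)+1\le 2m-4$ and $c(M)=c(M\backslash s)\le m-3$. Repeating this, and discarding free endpoints (which affect none of $m$, $p$, $c$), reduces to the case where $M$ is \emph{trimmed}, so that $G_M$ has no leaves.

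For trimmed $M$ with $c\ge 1$: if $c=1$ then $M$ is a simple polygon and Observation \ref{obs2} gives $p\le m\le 2m-3$ and $c=1\le m-2$ for $m\ge 3$. If $c\ge 2$, I invoke the construction in the proof of Proposition \ref{main_prop}: $G_M$ has an outer block which is a cycle $C$ with a single cut vertex $v$, whose circuit segment set $S$ has at least three segments, and I choose $s\in S$ avoiding $v$. The crucial structural point, and the step I expect to be the main obstacle, is to show that deleting this $s$ removes exactly two points and exactly one circuit. This follows from the cactus property: every vertex of $C$ other than $v$ must have degree $2$, since a higher-degree non-cut vertex would either force a second cut vertex on $C$ or create a second cycle sharing two vertices with $C$. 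Consequently $s$ is an ordinary polygon side whose two endpoints each lie on exactly two segments and whose interior meets no other segment. Deleting $s$ therefore removes precisely its two endpoints from $P$ and opens precisely the face bounded by $C$, leaving $M\backslash s$ a connected nontrivial cactus. The inductive hypothesis then gives $p(M)=p(M\backslash s)+2\le 2m-3$ and $c(M)=c(M\backslash s)+1\le m-2$, completing the induction.

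Finally, for tightness I would exhibit an explicit family: a ``zigzag'' strip of $c$ triangles sharing consecutive vertices along a common line, with the internal slanted sides merged collinearly through the shared vertices. Such a strip uses $m=c+2$ segments and has $p=2c+1=2m-3$ intersection points and $c=m-2$ circuits, so it is simultaneously extremal for both bounds. Taking $k_2$ such strips together with $k_1$ isolated segments then makes \eqref{eq99} and \eqref{eq100} hold with equality for the prescribed $k_1$ and $k_2$.
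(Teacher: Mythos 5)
Your proof is correct, but it routes through the induction differently than the paper does, and the comparison is instructive. The paper inducts on $m$ with $k_1,k_2$ carried along in the bound, invokes Proposition \ref{main_prop} to get a segment $s_1$ in a single circuit segment set whose off-circuit components are hanging segment trees, and then runs a delicate case analysis (via Corollary \ref{cor1}) over how a hanging tree's leaf segments $s_a,s_b$ meet $s_1$, producing a segment $s_*$ whose deletion drops $p$ by \emph{at most} $2$ and $c$ by \emph{at most} $1$; notably, the paper never trims inside this proof. You instead normalize first: reduce to a single nontrivial connected component (proving the cleaner per-component bounds $p\le 2m-3$, $c\le m-2$, which do sum correctly to \eqref{eq99} and \eqref{eq100}), strip off one-intersection-point segments by a direct inductive step, trim, and then delete a polygon side $s$ of an outer cycle block, getting \emph{exact} decrements $p(M)=p(M\backslash s)+2$ and $c(M)=c(M\backslash s)+1$. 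This buys you a shorter case analysis and avoids the paper's trickiest subcase (where both $s_a$ and $s_b$ meet $s$ in a common point), at the cost of re-deriving a sharpened local form of Proposition \ref{main_prop}'s conclusion rather than using its statement as a black box. Your key structural claim is sound --- in any graph a vertex lying in two blocks is a cut vertex, so every vertex of the outer cycle block $C$ other than $v$ has degree $2$ --- but one link in your ``consequently'' deserves spelling out: you must argue that \emph{all} edges and interior intersection points of $s$ lie on $C$ (if the subdivided path along $s$ between its endpoints left $C$, that path together with $C$ would sit in a single $2$-connected subgraph strictly containing the cycle $C$, contradicting that the block $C$ is exactly a cycle; and an interior intersection point has degree at least $3$, forcing it to be $v$, which $s$ avoids). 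Also note that deleting $s$ decreases the number of bounded faces by exactly one even when both faces incident to $s$ are bounded (e.g., nested circuits sharing the cut vertex), since removing a non-bridge edge of a plane graph always merges exactly two faces; your phrase ``opens precisely the face bounded by $C$'' is slightly imprecise there, though the count is right. Your zigzag-strip tightness construction with collinearly merged slanted sides is essentially the paper's Figure \ref{22a25} family, so that part coincides.
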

\proof
If $M$ is a segment forest, then $p\leq 2p-k_2\leq 2(m-k_1-k_2)-k_2=2(m-k_1)-3k_2$, where the first inequality follows from Observation \ref{obs1} (part 6.) and the second inequality follows from Observation \ref{obs2} (part 2.); this establishes the upper bound in (\ref{eq99}). Likewise, if $M$ is a segment forest, then the upper bound in (\ref{eq100}) follows from Observation \ref{obs1} (part 5.) and the fact that $c=0$. 
Thus, it remains to be shown that the upper bounds in (\ref{eq99}) and (\ref{eq100}) hold for the case when the segment cactus is not a segment forest, i.e., when $c \geq 1$, and hence $m\geq 3$. We will proceed by induction on $m$. Both inequalities clearly hold for $m=3$. Assume the inequalities hold for some $m \geq 3$ and let $M$ be a segment cactus with $m+1$ segments.

By Proposition~\ref{main_prop}, $M$ contains a segment $s_1$ which belongs to a single circuit segment set $S_1$, such that the connected components of $M\backslash s_1$ which do not contain segments of $S_1$ are segment trees. If $M\backslash s_1$ does not have any connected components which do not contain segments of $S_1$, let $s_*=s_1$. Note that in this case, deleting $s_*$ from $M$ decreases the number of intersection points by at most two, and the number of circuits by one. If $M\backslash s_1$ has at least one connected component $T$ which does not contain segments of $S_1$, then $T$ is a segment tree which can only intersect $s$ in a single point, since otherwise $s$ would be part of at least two circuits. If $T$ consists of a single segment, let $s_*$ be that segment. If $T$ contains at least two segments, then by Corollary \ref{cor1}, $T$ contains two segments $s_a$ and $s_b$, each having a single intersection point, such that removing either one of them from $T$ does not disconnect $T$. If neither $s_a$ nor $s_b$ intersect $s$, let $s_*=s_1$. If exactly one of $s_a$ and $s_b$ intersects $s$, let $s_*$ be the segment among $s_a$ and $s_b$ which does not intersect $s$. If both $s_a$ and $s_b$ intersect $s$, then $s$, $s_a$, and $s_b$ must all intersect in the same point; in this case, let $s_*=s_1$. In each of these cases, deleting $s_*$ from $M$ decreases the number of intersection points by at most one and does not affect the number of circuits.

Thus, the segment cactus $M\backslash s_*$ has $m$ segments, $p-i$ intersection points for some $i\in\{0,1,2\}$, and $c-t$ circuits for some $t\in\{0,1\}$. By the induction hypothesis, $p-i \leq 2(m-k_1)-3k_2$. Then, for the segment cactus $M$ with $m+1$ segments and $p$ intersections, we obtain $p \leq 2(m-k_1)-3k_2+i\leq 2(m-k_1)-3k_2+2=2(m+1-k_1)-3k_2$. Similarly, by the induction hypothesis, $c-i \leq (m-k_1)-2k_2$. Then, for the segment cactus $M$ with $m+1$ segments and $c$ circuits we obtain $c \leq (m-k_1)-2k_2+i\leq (m-k_1)-2k_2+1=(m+1-k_1)-2k_2$. This concludes the inductive step and establishes the inequalities. The inequalities in (\ref{eq99}) and (\ref{eq100}) hold with equality for all $m\geq 1$ for the construction shown in Figure \ref{22a25}.
\qed

\begin{figure}[h]
\begin{center}
\includegraphics[scale=0.45]{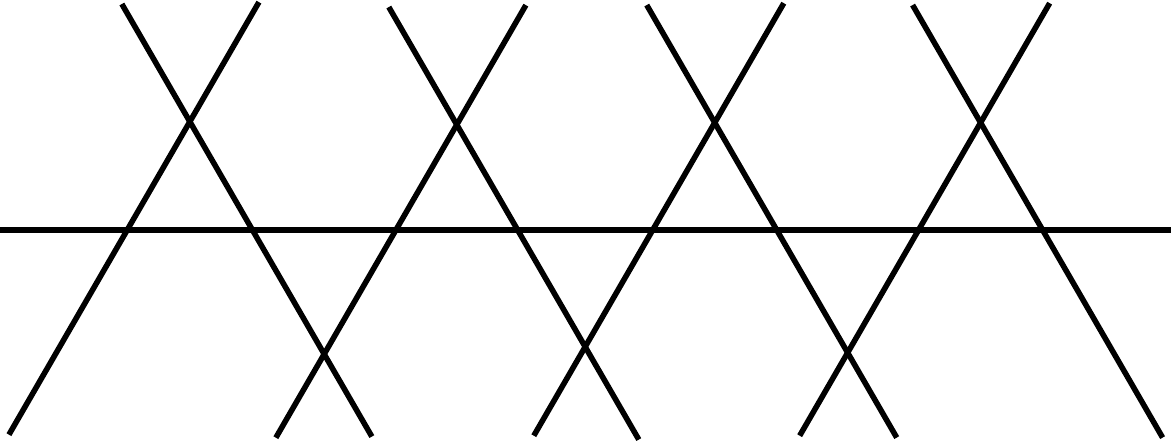}
\end{center}
\caption{A class of segment cactus graphs for which the bounds in (\ref{eq99}) and (\ref{eq100}) hold with equality.}
\label{22a25}
\end{figure}

\noindent Since a segment forest is a segment cactus graph, $p\geq k_2$ and $c\geq 0$ are tight lower bounds for segment cactus graphs. 

\subsection{Segment $K_3$-free graphs}

A \emph{$K_3$-free} graph\footnote{We use this nomenclature instead of \emph{triangle-free graph} in order to avoid confusion between geometric and graph theoretic triangles.} is a graph which has no subgraph isomorphic to $K_3$.

\begin{theorem}
Let $M$ be a segment $K_3$-free graph.  Then

\begin{align*}
&p(M)\leq {m(M)\choose 2}-(m(M)-2),\\
&c(M)\leq {m(M)-2\choose 2},
\end{align*}
and these bounds are tight.
\end{theorem}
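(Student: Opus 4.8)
The plan is to bound $p(M)$ and $c(M)$ by exploiting the $K_3$-free condition to rule out the intersection patterns that would otherwise maximize these quantities. I would first observe that the two bounds differ from the general bounds in Observation~\ref{obs1} (namely $p \leq \binom{m}{2}$ and $c \leq \binom{m-1}{2}$) by exactly a linear term, which strongly suggests that the $K_3$-free restriction forbids precisely the configurations achieving the generic maximum, and that the correct approach is to count how many intersection points must be ``lost'' relative to the all-pairs-intersect scenario.

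For the bound on $p(M)$, the key combinatorial fact is that $p \leq \binom{m}{2}$ comes from every pair of segments contributing at most one intersection point. The deficit $m-2$ should arise because a $K_3$-free graph cannot have every pair of segments meet in a point that creates a geometric triangle. My first step would be to argue that if we try to have all $\binom{m}{2}$ pairs intersect, then among any three mutually intersecting segments whose three pairwise intersection points are distinct, we obtain a $K_3$ in $G_M$ (the three intersection points form a triangle), which is forbidden. So to avoid triangles, many pairs of intersection points must coincide or many pairs of segments must fail to intersect; the cleanest formulation is to fix an ordering $s_1,\ldots,s_m$ of the segments and count, for each new segment added, how many \emph{new} intersection points it can contribute without creating a $K_3$ with previously placed segments. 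I expect that the first two segments contribute the baseline and that each subsequent segment is forced to lose at least one potential intersection point, yielding the total deficit of $m-2$.

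For the bound $c(M) \leq \binom{m-2}{2}$, I would relate circuits to the face structure of $G_M$ and again use that $K_3$-freeness forbids the small triangular faces that let $c$ approach $\binom{m-1}{2}$. Here I would try to leverage the already-established bound on $p(M)$ together with Euler's formula for the planar graph $G_M$: writing $c(M)$ as the number of bounded faces and using $n - e + (c+1) = 1 + (\text{number of components})$, I would express $c$ in terms of $n$ and $e$, then bound $e$ from above using the fact that a $K_3$-free (bipartite-like) planar graph satisfies $e \leq 2n - 4$. Substituting the bound on $n \leq p + j$ and the bound on $p$ from the first part should cascade down to $\binom{m-2}{2}$, though the bookkeeping with endpoints $j$ and the $+j$ slack in $n \leq p+j$ will need care.

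The main obstacle I anticipate is making the ``deficit'' argument for $p(M)$ rigorous rather than heuristic: it is not immediately true that every triple of mutually crossing segments creates a graph triangle, because two of the three pairwise intersection points could coincide (three concurrent segments), in which case no $K_3$ appears. So the hard part will be carefully handling concurrences and collinearities, and showing that even when segments are allowed to be concurrent, the total count of distinct intersection points still drops by at least $m-2$ below $\binom{m}{2}$. I would address this by arguing that concurrence of three segments at a point already reduces $p$ below $\binom{m}{2}$ by merging intersection points, so either way one pays the price; the tightness claim would then be verified against an explicit extremal $K_3$-free construction (most likely segments arranged so that intersection points form a $K_3$-free planar graph meeting the Euler-formula bound with equality).
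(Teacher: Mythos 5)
Your plan for $p(M)$ has two concrete failure points. First, a triple of pairwise-crossing segments with three distinct intersection points need \emph{not} induce a $K_3$ in $G_M$: adjacency in $G_M$ requires the two intersection points to be consecutive on their common segment, so a fourth segment crossing one side of the geometric triangle destroys the $K_3$ without removing any crossing. You flag concurrency as the obstacle, but this subdivision phenomenon is the deeper one, and it breaks your incremental scheme at the root: $K_3$-freeness is not inherited by sub-arrangements, since deleting a segment can make two points consecutive and thereby \emph{create} a $K_3$ (take segments $a,b,c$ forming a triangle and $d$ crossing $a$ strictly between $x_{ab}$ and $x_{ac}$: the four-segment set is segment $K_3$-free, but the prefix $\{a,b,c\}$ is not). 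So when you add $s_k$ you cannot assume the prefix is $K_3$-free, and it is simply false as a local statement that each added segment must forfeit a crossing. The deficit $m-2$ is not a greedy fact; in the paper it comes from the nontrivial theorem that an arrangement of $m$ lines in general position determines at least $m-2$ triangles. The paper's proof introduces $t(M)$, the number of $K_3$ subgraphs of $G_M$, sets $A=p-t$ and $B=c-t$, and shows by carefully chosen translations, rotations, and extensions that $A$ and $B$ never decrease as the segments are grown into a line arrangement $\mathcal{L}$ in general position, where $p(\mathcal{L})=\binom{m}{2}$ and $t(\mathcal{L})\geq m-2$; since $t(M)=0$ for the original $K_3$-free set, the bound follows. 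Monotonicity of $p-t$ under perturbation, rather than any statement about $p$ alone, is the idea your outline is missing.

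Your route to $c(M)\leq\binom{m-2}{2}$ is quantitatively too weak. Even in the most favorable bookkeeping (trim all leaves so that $n=p$, assume $G_M$ connected), Euler's formula with $e\leq 2n-4$ gives $c=e-n+1\leq n-3=p-3\leq\binom{m}{2}-(m-2)-3=\binom{m}{2}-m-1$, whereas the target is $\binom{m-2}{2}=\binom{m}{2}-(2m-3)$; you fall short by $m-4$, and using the slack $n\leq p+j\leq p+2m$ as you propose makes this worse by another additive $\Theta(m)$. The paper avoids this entirely by carrying $B=c-t$ through the same perturbation and evaluating it on the final line arrangement, where $c(\mathcal{L})=\frac{m^2+m+2}{2}-2m$ and $t(\mathcal{L})\geq m-2$. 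Finally, tightness requires an explicit construction; the paper gives a zigzag family $r_0,\ell_0,r_1,\ell_1,\ldots$ in which every segment after the third crosses all but exactly one earlier segment, which is not something one gets from merely ``meeting the Euler-formula bound with equality.''
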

\proof
Given a segment set $M$, let $t(M)$ denote the number of $K_3$ subgraphs of $G_M$, let $A(M)=p(M)-t(M)$, and let $B(M)=c(M)-t(M)$. When there is no scope for confusion, dependence on $M$ will be omitted. We will refer to the circuits of $M$ which correspond to $K_3$-subgraphs of $G_M$ as \emph{triangle circuits}. Let $M$ be an arbitrary segment $K_3$-free graph with $m$ segments. Let $M_0$ be obtained from $M$ by extending each segment $s$ of $M$ which has an endpoint that is also an intersection point by a small distance in the direction of that endpoint so that the endpoint is no longer an intersection point, but $s$ does not intersect any new segment. Note that $M_0$ is also a segment $K_3$-free graph (since $G_{M_0}$ is obtained by adding some leaves to $G_{M}$, which cannot create a $K_3$ subgraph), and that $p(M)=p(M_0)$ and $c(M)=c(M_0)$. Let the segments of $M_0$ be $s_1,\ldots,s_m$. We will transform $M_i$, $i\geq 0$, into $M_{i+1}$ by perturbing $s_i$ as follows. 

First, translate $s_i$ by a small distance so that none of its intersection points are shared with more than one other segment, and so that $s_i$ does not intersect any segments that it did not previously intersect. Since none of the endpoints in $M_i$ are intersection points, this can be done by choosing a small enough translation distance (which is also small enough that no endpoints become intersection points after the translation). For each intersection point of $s_i$ that was shared with more than one other segment before the translation, the number of new intersection points that are created as a result of the translation is one more than the number of new triangle circuits created (see Figure \ref{figure_k3_free}, top for an illustration). Each existing triangle circuit which intersects $s_i$ in a side or a point either remains a triangle circuit or is turned into non-triangle circuit through the translation. However, non-triangle circuits cannot disappear or be turned into triangle circuits through the translation, because doing so would require segments which did not previously intersect to intersect after the translation. Thus, $B$ does not decrease after the translation; moreover, since $p$ increases at least as much as $t$ after the translation, $A$ also does not decrease.

\begin{figure}[h]
\begin{center}
\includegraphics[scale=0.6]{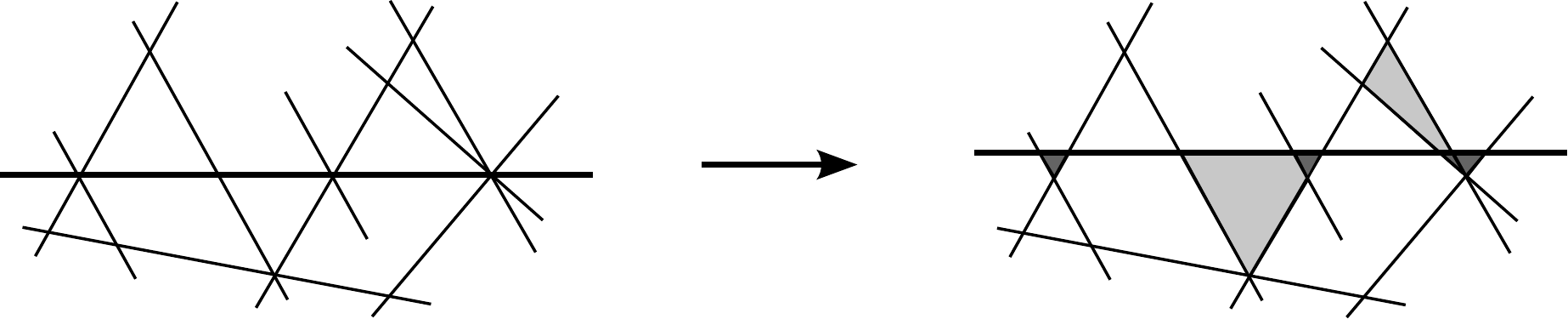}
\end{center}
\caption{The bold segment is translated by a small distance so that none of its intersection points share more than one other segment. The number of triangle circuits created by the translation (shaded dark) is no more than the number of intersection points created; some triangle circuits become non-triangle circuits (shaded light), but no non-triangle circuits disappear.}
\label{figure_k3_free}
\end{figure}

Next, rotate $s_i$ by a small (possibly zero) degree so that it is not parallel to any other segment in $M_i$. The degree can be chosen small enough so that each intersection point of $s_i$ remains an intersection point between the same two segments it was previously an intersection point between (note that the endpoints are not intersection points, so no intersection points will disappear if the degree of rotation is small enough). Thus, the topology of neither the circuits nor the intersection points is affected by this rotation, so $A$ and $B$ do not change. 

Next, extend $s_i$ from one endpoint until it intersects another segment; if the extended endpoint of $s_i$ intersects the new segment at an already-existing intersection point, translate $s_i$ by a small distance so that all of the previous intersection points and circuits (and their topologies) are preserved, but the extended endpoint of $s_i$ intersects the new segment in a point that was not previously an intersection point. As discussed previously, no non-triangle circuits disappear as a result of such a translation, if one is necessary. If the extension does not split any circuit into two circuits, then $B$ has not changed, $t$ has not changed, and $p$ has increased by one. If this extension does split a circuit into two circuits, then at most one of these two circuits is a triangle, since splitting a circuit into two triangle circuits requires at least one of the two intersection points of $s_i$ with the circuit to be a point where 3 segments meet (this is avoided by the translations). Thus, in either case, $p$ increases at least as much as $t$, so $A$ does not decrease. Moreover, regardless of whether the circuit that is split by $s_i$ was a triangle or a non-triangle, $B$ cannot decrease, since in either case at least one new non-triangle circuit is created and at most one non-triangle circuit disappears. Repeat this extension with both endpoints of $s_i$ until no new segments can be crossed, and then extend both endpoints by a sufficiently large distance so that all future extensions of segments will be able to intersect $s_i$. Call the resulting set of segments $M_{i+1}$. Since at each step of the perturbation, $A$ and $B$ either increase or remain unchanged, it follows that $A(M_i)\leq A(M_{i+1})$ and $B(M_i)\leq B(M_{i+1})$. 

By repeating the same perturbation with all segments, we obtain a sequence of segment sets $M_0,\ldots,M_m$ such that $A(M_0)\leq \ldots\leq A(M_m)$ and $B(M_0)\leq \ldots\leq B(M_m)$. In $M_m$, no segments are parallel, and no intersection point is shared by more than two segments; moreover, since the segments are long enough, each segment intersects every other segment. If each segment of $M_m$ is extended to a line, we obtain a set of lines $\mathcal{L}$ in general position; moreover, $A(\mathcal{L})=A(M_m)$ and $B(\mathcal{L})=B(M_m)$ since no new circuits or intersection points can be created by extending the segments. It is well-known that for a set of lines $\mathcal{L}$ in general position, $p(\mathcal{L})=\binom{m}{2}$ and $t(\mathcal{L})\geq m-2$. Hence, since $t(M_0)=0$, we have 
\[p(M)=p(M_0)-t(M_0)=A(M_0)\leq A(M_m)=A(\mathcal{L})=p(\mathcal{L})-t(\mathcal{L})\leq \binom{m}{2}-(m-2).\]
It is also well-known that the number of regions formed by a set of $m$ lines in general position is $\frac{m^2+m+2}{2}$ (this is known as the sequence of central polygonal numbers); this count includes $2m$ unbounded regions, so $c(\mathcal{L})=\frac{m^2+m+2}{2}-2m$. Then,
\[c(M)=c(M_0)-t(M_0)=B(M_0)\leq B(M_m)\leq B(\mathcal{L})=c(\mathcal{L})-t(\mathcal{L})\leq\frac{m^2+m+2}{2}-2m-(m-2).\]
Since $M$ was arbitrary, it follows that for any segment $K_3$-free graph, $p\leq \binom{m}{2}-(m-2)$ and $c\leq \frac{m^2+m+2}{2}-2m-(m-2)=\binom{m-2}{2}$. These bounds are tight for the following construction. Start with vertical parallel segments $r_0$ and $\ell_0$ which are long enough for future segments to intersect. Let $r_1$ be a segment crossing both $r_0$ and $\ell_0$ at an angle to the right. Then, for $i\geq 1$, we iteratively add segments $\ell_i$ and $r_{i+1}$ as follows. After a segment $r_i$ is added, find the intersection of $r_i$ and $r_{i-1}$, move down along $r_{i-1}$ a short distance, and place one endpoint of the segment $\ell_i$ there. From this endpoint, $\ell_i$ continues downward and to the left, at such an angle that it intersects every line already present, except for $r_i$. After a segment $\ell_i$ is added, find the intersection of $\ell_i$ and $\ell_{i-1}$, move down along $\ell_{i-1}$ a short distance, and place one endpoint of the segment $r_{i+1}$ there. From this endpoint, $r_{i+1}$ continues downward and to the right, at such an angle that it intersects every line already present, except for $\ell_i$. Newly added segments are long enough for all future segments to intersect; see Figure \ref{k3freefigure} for an illustration. There are two intersection points and zero circuits among $r_0$, $\ell_0$, and $r_1$; then, beginning with the fourth segment, each added segment intersects all-but-one of the existing segments. Thus, the total number of intersection points in this construction is $2+\sum_{i=4}^{m}(i-2)={m\choose 2}-(m-2)$, and the total number of circuits is $\sum_{i=4}^{m}(i-3)={m-2\choose 2}$.
\qed
\vspace{9pt}

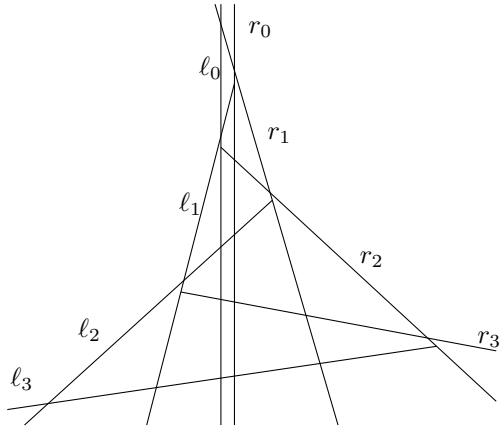
\begin{figure}[h]
\centering
\begin{tikzpicture}[scale=1]
\draw (2.8405, -5.60136) -- (2.8405, -.00137);
\draw (3.0205, -5.60136) -- (3.0205, -.00137);
\draw (2.76179, -.0014) -- (4.39746, -5.60132);
\draw (3.0205, -1.05637) -- (1.85385, -5.60136);
\draw (2.8405, -1.90637) -- (6.50047, -5.28574);
\draw (3.52389, -2.611) -- (.22912, -5.60136);
\draw (2.30819, -3.83136) -- (6.50049, -4.61421);
\draw (5.70762, -4.55367) -- (.0005, -5.39554);
\node[right] at (3.081924, -.342534) {$r_0$};
\node[right] at (3.336924, -1.752534) {$r_1$};
\node[right] at (4.561924, -3.394204) {$r_2$};
\node[right] at (6.128594, -4.462534) {$r_3$};
\node at (2.701924, -.850869) {$\ell_0$};
\node at (2.437173, -2.660869) {$\ell_1$};
\node[left] at (1.370259, -4.340874) {$\ell_2$};
\node[above] at (.196929, -5.254204) {$\ell_3$};
\end{tikzpicture}
\caption{Construction demonstrating the tightness of the upper bound on $p(M)$ for segment $K_3$-free graphs; in this example, $m=8$ and $p=22$.}
\label{k3freefigure}\end{figure}

Since a segment forest is a segment $K_3$-free graph, $p\geq k_2$ and $c\geq 0$ are tight lower bounds for general segment $K_3$-free graphs. A better lower bound can be derived for a segment $K_3$-free graph $M$ which has been trimmed (i.e., the corresponding graph $G_M$ has no leaves). In this case, $p= n\geq\frac{e+4}{2}\geq \frac{m+4}{2}$, where the first inequality follows from the fact that for any planar $K_3$-free graph $G$, $|E(G)|\leq 2(|V(G)|-2)$. This bound is tight, e.g., for a noncollinear straight-line embedding of the complete bipartite graph with parts of size $2$ and $n-2$.

\subsection{Segment maximal planar graphs}

A graph is \emph{maximal planar} if it is planar and adding any edge causes it to no longer be planar. Every face of a maximal planar graph (in any planar embedding) is a triangle. 


\begin{proposition}
\label{prop_max_planar}
Let $M$ be a segment maximal planar graph.  Then
\[p(M)\geq\left\lceil\frac{m(M)+6}{3}\right\rceil,\]
\[c(M)\geq\left\lceil\frac{2m(M)-3}{3}\right\rceil,\]
and these bounds are tight. Moreover, there exist segment maximal planar graphs with 
$p=\theta(m^2)$ and $c=\theta(m^2)$.
\end{proposition}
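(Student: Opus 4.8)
The plan is to reduce everything to Euler's formula for triangulations. First I would observe that a maximal planar graph has no leaves (its minimum degree is at least two), so $G_M$ has no degree-$1$ vertex; consequently every segment endpoint must coincide with an intersection point, since a non-shared, non-crossed endpoint would be a leaf. Hence $J\subseteq P$ and $p(M)=n(M)$. Because $G_M$ is a triangulation we have $e(M)=3n(M)-6$, and removing the triangular outer face from the $2n(M)-4$ total faces gives $c(M)=2n(M)-5$. Feeding these into $m(M)\leq e(M)$ from Observation~\ref{obs1} yields $m\leq 3p-6$, hence $p\geq (m+6)/3$, and integrality gives the first bound. Substituting $p=(c+5)/2$ into the same inequality gives $2m\leq 3c+3$, i.e. $c\geq (2m-3)/3$, and integrality gives the second bound.

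For tightness of the lower bounds I would draw a triangulation by the straight-line embedding theorem \cite{wagner} in sufficiently general position that no two edges are collinear, so that $m=e=3n-6$; then $p=n$ and $c=2n-5$ meet both bounds exactly, settling the case $m\equiv 0\ (\mathrm{mod}\ 3)$. To reach the other residues I would \emph{straighten}: if a vertex $v$ has two non-adjacent neighbours $u,w$, I can choose the embedding so that $u,v,w$ are collinear, merging the edges $uv$ and $vw$ into a single segment through $v$. This leaves $G_M$ (hence $p$ and $c$) unchanged while lowering $m$ by one, and in a large enough triangulation such non-adjacent neighbour pairs are plentiful. One straightening handles $m\equiv 2$ and two handle $m\equiv 1$, keeping $p$ at its floor value in every class. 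The one wrinkle is that $c=2n-5$ is always \emph{odd}: for $m\equiv 1\ (\mathrm{mod}\ 3)$ the quantity $\lceil(2m-3)/3\rceil$ is even, so $c$ necessarily exceeds it by one, and exact equality in the $c$-bound is attainable only in the classes $m\equiv 0,2\ (\mathrm{mod}\ 3)$.

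For the quadratic statement I would use a triangular grid: take three families of roughly $k$ equally spaced parallel lines, one family parallel to each side of a fixed triangle, arranged so that the three lines through each interior node are concurrent. This uses $m=\theta(k)$ segments (each grid line is a single segment carrying $\theta(k)$ edges), creates $\theta(k^2)$ triple-point intersections, and tiles the triangle with $\theta(k^2)$ small triangular faces, so that $p=c=\theta(m^2)$ as required. The one genuine obstacle is that the boundary of the grid triangle carries $\theta(k)$ subdivision vertices, so the outer face is a $\theta(k)$-gon rather than a triangle and $G_M$ is not yet maximal planar.

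To remove this obstacle I would enclose the grid in a larger triangle $A'B'C'$ and triangulate the annular collar by drawing, from each of $A',B',C'$, a fan of straight segments to the boundary vertices of the grid, chosen so that every new face is a triangle and $A'B'C'$ itself carries no subdivision vertices. This adds only $\theta(k)$ further segments, leaves the outer face equal to the triangle $A'B'C'$, and upgrades $G_M$ to a genuine triangulation while preserving $m=\theta(k)$ and $p=c=\theta(m^2)$. Checking that this collar can indeed be fan-triangulated by straight segments without creating unwanted collinearities or non-triangular faces is the step that demands the most care, and is the main obstacle I expect in making the argument rigorous.
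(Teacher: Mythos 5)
Your derivation of the two lower bounds coincides exactly with the paper's: no leaves forces $J\subseteq P$ and $p=n$, then $e=3n-6$, $c=2n-5=2p-5$, and $m\leq e$ (Observation~\ref{obs1}) give both inequalities, so that part needs no comment. Where you genuinely diverge is the tightness discussion, and there your treatment is \emph{finer} than the paper's: the paper certifies equality only via straight-line embeddings with no collinear edges, which forces $m=3n-6\equiv 0\ (\mathrm{mod}\ 3)$, and is silent about the other residue classes; your straightening device (merging edges $uv$ and $vw$ into a single segment through $v$ when $u,w$ are non-adjacent, which lowers $m$ by one while fixing $G_M$, $p$, and $c$) correctly settles $m\equiv 2$ and $m\equiv 1$ for the $p$-bound, and your parity observation --- $c=2p-5$ is always odd while $\left\lceil(2m-3)/3\right\rceil$ is even precisely when $m\equiv 1\ (\mathrm{mod}\ 3)$ --- is a genuine sharpening, showing the $c$-bound as written is attainable only in the classes $m\equiv 0,2\ (\mathrm{mod}\ 3)$ and is off by one in the third class. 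The one soft spot in this extra material is the unproved assertion that a straight-line embedding realizing a prescribed collinear triple $u,v,w$ always exists; F\'ary-type theorems do not hand you prescribed collinearities, so a rigorous version should exhibit an explicit family (a perturbed stacked triangulation works), but since this refinement goes beyond anything the paper claims, it is a bonus rather than a gap in the required statement. Finally, your quadratic construction is essentially the paper's Figure~\ref{fig_max_planar}: a triangulated equilateral triangle with $x$ segments parallel to each side, closed off by fans from three external apexes to the boundary subdivision points, yielding $m=6x+9$ and $p,c=\theta(m^2)$; the collar-triangulation verification you flag as the delicate step is exactly what the paper's three-apex picture resolves, and your rotated-inner-triangle variant (three fans plus three corner triangles) triangulates it just as well.
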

\proof
Maximal planar graphs do not have any leaves, so for segment maximal planar graphs, $J\subset P$, and hence $p=n$.  For any maximal planar graph $G$, $|E(G)|=3|V(G)|-6$. Thus, $p=\frac{e+6}{3}\geq\frac{m+6}{3}$ and since $p$ is an integer, this bound can be tightened to $p\geq\lceil\frac{m+6}{3}\rceil$. By Euler's formula and by the previous inequality, 
\[c=1-n+e=1-n+(3n-6)=2n-5=2p-5\geq 2\left(\frac{m+6}{3}\right)-5=\frac{2m-3}{3},\] and since $c$ is an integer, this bound can be tightened to $c\geq \lceil \frac{2m-3}{3}\rceil$. The bounds on $p$ and $c$ hold with equality for any segment set obtained from a straight-line plane embedding of a maximal planar graph in which the edges are drawn as non-collinear segments. The set of segments obtained by triangulating an equilateral triangle with $x\geq 0$ segments parallel to each side, and connecting an external point to each intersection point on each side of the boundary of the triangle as in Figure \ref{fig_max_planar}, has $m=6x+9$ segments, and hence $p=\frac{(x+3)(x+2)}{2}+3=\frac{m^2}{72}+\frac{m}{6}+\frac{27}{8}=\theta (m^2)$ and $c=(x+1)^2+3(x+2)=\frac{m^2}{36}+\frac{m}{3}+\frac{7}{4}=\theta (m^2)$.
\qed

\begin{figure}[h]
\begin{center}
\includegraphics[scale=0.25]{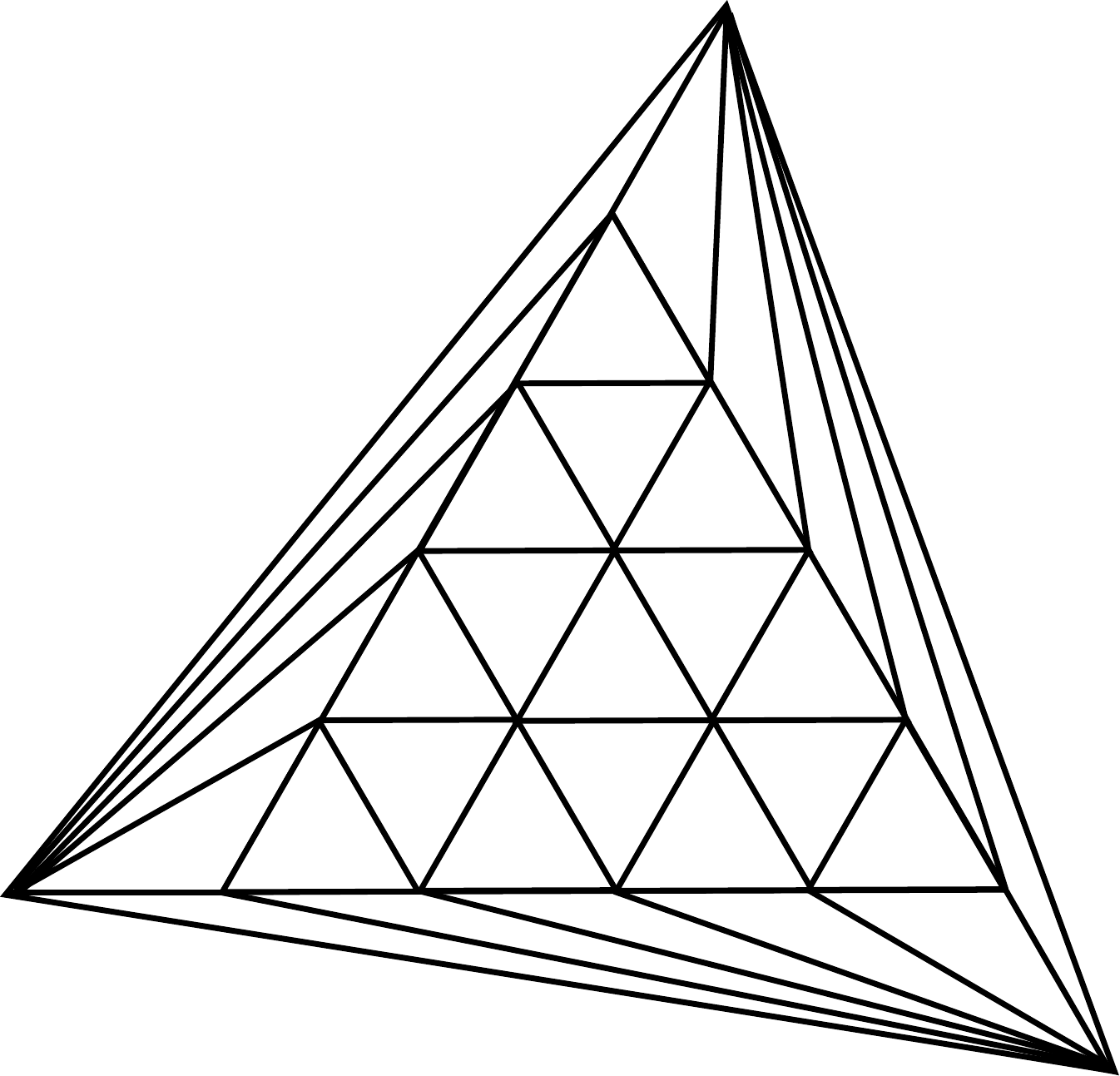}
\end{center}
\caption{Construction demonstrating the tightness of the asymptotic upper bound on $p(M)$ and $c(M)$ for segment maximal planar graphs.}
\label{fig_max_planar}
\end{figure}

\subsection{Buffon segments}
\label{section_buffon}

A set of segments $M=M(m,\ell)$ is called a \emph{Buffon set}\footnote{The nomenclature is derived from Buffon's Needle Problem, which investigates the probability that a needle will fall on a line when dropped on a sheet with equally spaced lines.} if it is produced by $m$ segments of length $\ell$ randomly placed in the unit square; dependence on $m$ and $\ell$ will be omitted when it is clear from the context. We assume the centers and angles of the segments are chosen uniformly at random. In this section, we investigate the expected number of intersection points in a Buffon set of segments as a function of $m$ and $\ell$, along with other structural properties.

\begin{proposition}
\label{prop_T_intersecting}
Let $M$ be a Buffon set of segments. The expected number of distinct subsets of $M$ of $t$ segments which mutually intersect is $O(m^t\ell^{2t-2})$.
\end{proposition}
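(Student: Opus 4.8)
The plan is to combine linearity of expectation with a uniform Buffon-type bound on the probability that a single pair of random segments intersects. There are $\binom{m}{t}=O(m^t)$ subsets of $M$ of size $t$, so by linearity of expectation it suffices to show that any fixed $t$-subset mutually intersects with probability $O(\ell^{2t-2})$; summing this bound over all subsets then yields the claim.

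First I would establish the pairwise estimate: for any fixed segment $s$ of length $\ell$ in the unit square, a second, independently placed random segment intersects $s$ with probability at most $c\ell^2$ for an absolute constant $c$, uniformly in the position and angle of $s$. To see this, observe that if a length-$\ell$ segment meets $s$, then its center lies within distance $\ell/2$ of some point of $s$; that is, the center lies in the $(\ell/2)$-neighborhood of $s$, a stadium-shaped region of area at most $\ell\cdot\ell+\pi(\ell/2)^2=O(\ell^2)$. Since the center is uniform on the unit square of area $1$, the probability that the center lands in this region --- and therefore the probability of intersection --- is $O(\ell^2)$, independently of where $s$ sits, as clipping the region to the square can only decrease its area.

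Next I would bound the mutual-intersection probability of a fixed subset $\{s_1,\dots,s_t\}$. Mutual intersection implies, in particular, that each of $s_2,\dots,s_t$ meets $s_1$. Conditioning on $s_1$, the events ``$s_j$ meets $s_1$'' for $j=2,\dots,t$ are independent, since $s_2,\dots,s_t$ are placed independently; hence their conditional joint probability equals $p(s_1)^{t-1}$, where $p(s_1)$ denotes the conditional probability that a single random segment meets $s_1$. Taking expectation over $s_1$ and applying the uniform bound $p(s_1)\le c\ell^2$ from the previous step gives a mutual-intersection probability of at most $(c\ell^2)^{t-1}=O(\ell^{2t-2})$. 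Multiplying by the $O(m^t)$ subsets completes the argument.

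The main obstacle is the uniform pairwise bound: one must verify that the $O(\ell^2)$ estimate holds for \emph{every} placement of the fixed segment, including placements near the boundary of the square, and that the integration over angles does not inflate the estimate. Both issues are dissolved by the observation that bounding the intersection probability simply by the area of the center-region --- ignoring the angular constraint and any boundary clipping --- already suffices for an upper bound, so no delicate geometric-probability computation is needed and all looseness is absorbed into the $O(\cdot)$.
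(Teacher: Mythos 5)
Your proof is correct and follows essentially the same route as the paper's: both reduce mutual intersection of a fixed $t$-subset to the event that $s_2,\dots,s_t$ each lie near $s_1$, use the conditional independence of the placements to obtain an $O(\ell^{2t-2})$ bound per subset, and conclude by linearity of expectation over the $O(m^t)$ subsets. The only cosmetic difference is that you bound the center of each other segment by a stadium-shaped $(\ell/2)$-neighborhood of $s_1$ of area $O(\ell^2)$, whereas the paper uses the slightly cruder necessary condition that each center lie in the disk of radius $\ell$ about the center of $s_1$, giving $\left(\pi\ell^2\right)^{t-1}$.
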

\begin{proof}
Let $s_1,\ldots,s_t$ be segments in $M$; let $X$ be the event that $s_1,\ldots,s_t$ mutually intersect, and let $Y$ be the event that the centers of $s_{2}, \dots, s_{t}$ are within distance $\ell$ of the center of $s_{1}$. If the center of some $s_i$, $2\leq i\leq t$, is not within distance $\ell$ of the center of $s_1$, then $s_1,\ldots,s_t$ cannot mutually intersect. Thus, $X$ implies $Y$, and $Pr[X]\leq Pr[Y]$. Moreover, $Pr[Y] \leq (\pi \ell^2)^{t-1}$, since this is the probability that the centers of $s_2,\ldots,s_t$ lie in a disk with radius $\ell$ centered at the center of $s_1$. Thus, $Pr[X]=O(\ell^{2t-2})$. The number of distinct sets of $t$ segments which mutually intersect is equal to $\sum_{T\subset M,\, |T|=t} X_T$, where $X_T$ is the indicator random variable for the event that the segments in $T$ mutually intersect. By linearity of expectation and the fact that $E[X_T]=O(\ell^{2t-2})$, the expected number of distinct sets of $t$ segments which mutually intersect is $\sum_{T\subset M,\, |T|=t}E[X_T] = O(m^t\ell^{2t-2})$.

\end{proof}

\begin{theorem}
\label{thm_exp_p_buff}
Let $M$ be a Buffon set of segments. Then, $E[p(M)]=\theta(m^2 \ell^2)$. 
\end{theorem}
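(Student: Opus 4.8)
The plan is to show that, outside a set of configurations of probability zero, $p(M)$ equals the number of pairs of segments that intersect, and then to pin down the probability that a fixed pair intersects as $\theta(\ell^2)$. Combined with linearity of expectation over the $\binom{m}{2}$ pairs, this yields both directions of the bound.

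For the upper bound I would note that every intersection point lies on at least two segments, so $p(M)$ is at most the number of intersecting pairs of segments. Applying Proposition~\ref{prop_T_intersecting} with $t=2$ shows that the expected number of intersecting pairs is $O(m^2\ell^2)$, hence $E[p(M)]=O(m^2\ell^2)$ immediately.

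For the lower bound the first step is to observe that, since the centers and angles are drawn from continuous distributions, with probability $1$ no two segments are collinear and no three segments pass through a common point; on this event each intersection point is determined by exactly one pair of segments, so $p(M)$ equals the number of intersecting pairs almost surely. By linearity of expectation it then suffices to show that two random segments intersect with probability $\Omega(\ell^2)$, since in that case $E[p(M)]=\binom{m}{2}\,\Pr[\text{two segments intersect}]=\Omega(m^2\ell^2)$.

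The heart of the argument is the estimate $\Pr[\text{two segments intersect}]=\Omega(\ell^2)$, and this is the step I expect to be the main obstacle, because of boundary effects near the edges of the unit square. Writing the two segments as $c_i+tu_i$ with $t\in[-\ell/2,\ell/2]$ and $u_i$ the unit direction at angle $\theta_i$, the segments meet exactly when $c_2-c_1$ lies in the parallelogram $P$ spanned by $\ell u_1$ and $\ell u_2$, which has area $\ell^2|\sin(\theta_1-\theta_2)|$. To avoid the square's boundary I would condition on the constant-probability event that $c_1\in[\tfrac14,\tfrac34]^2$ and restrict to $\ell\le\tfrac14$; then $c_1+P$ is contained in the unit square, so the conditional probability that $c_2$ lands in $c_1+P$ is exactly $\ell^2|\sin(\theta_1-\theta_2)|$. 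Averaging over the angles, $E|\sin(\theta_1-\theta_2)|$ is a positive absolute constant, which gives $\Pr[\text{two segments intersect}]\ge \tfrac14\,E|\sin(\theta_1-\theta_2)|\,\ell^2=\Omega(\ell^2)$ and completes the lower bound. Combining the two directions yields $E[p(M)]=\theta(m^2\ell^2)$.
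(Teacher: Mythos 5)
Your proposal is correct, and its skeleton matches the paper's: the upper bound is obtained exactly as in the paper, by applying Proposition~\ref{prop_T_intersecting} with $t=2$ and linearity of expectation, and the lower bound likewise reduces to showing that a fixed pair of segments intersects with probability $\Omega(\ell^2)$. Where you genuinely diverge is in how that key estimate is proved. The paper conditions on the event $Y$ that the two centers are within distance $\ell/2$ (so $\Pr[Y]\geq \frac{1}{4}\pi(\ell/2)^2$ by a quarter-disk argument) and bounds $\Pr[X\,|\,Y]$ below by the constant $p_5=\Pr[X\,|\,\text{centers exactly } \ell/2 \text{ apart}]$, using scale invariance to see that $p_5$ is independent of $\ell$, together with the asserted-but-unproved monotonicity claim that the intersection probability decreases with the distance between centers. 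You replace this with an exact geometric computation: the segments meet iff $c_2-c_1$ lies in the parallelogram spanned by $\ell u_1$ and $\ell u_2$, of area $\ell^2\left|\sin(\theta_1-\theta_2)\right|$, and conditioning $c_1\in[\frac14,\frac34]^2$ keeps that parallelogram inside the unit square, so the conditional probability is exactly the area; averaging $\left|\sin(\theta_1-\theta_2)\right|$ over the angles gives a positive absolute constant. This buys you a fully self-contained argument that avoids the paper's implicit monotonicity step, and you also make explicit the almost-sure identification of $p(M)$ with the number of intersecting pairs (no collinear pairs, no point lying on three segments --- an event that also rules out two disjoint pairs crossing at a common point, since that would put four segments through one point), which the paper uses silently when writing $p(M)=\sum X_{\{s_1,s_2\}}$. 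One small loose end: as written your argument requires $\ell\leq\frac14$; since $p\leq\binom{m}{2}$ forces $\ell=O(1)$ for the theorem to hold at all, the remaining range $\ell\in(\frac14,O(1)]$ is handled by the one-line remark that the intersection probability is nondecreasing in $\ell$ (growing the segments preserves any intersection), hence bounded below by a constant there, while $\ell^2$ is bounded above.
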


\begin{proof}
Let $X$ be the event that two segments $s_{1}, s_{2}$ of length $\ell$ intersect, let $Y$ be the event that the distance between the centers of $s_1$ and $s_2$ is at most $\ell / 2$, and let $Z$ be the event that the distance between the centers of $s_1$ and $s_2$ is exactly $\ell / 2$. Let $p_1=Pr[X]$, $p_2=Pr[X \cap Y]$, $p_3=Pr[X|Y]$, $p_4=Pr[Y]$, and $p_5=Pr[X|Z]$. Clearly $p_1\geq p_2$, and by definition of conditional probability, $p_2 = p_3 p_4$. Moreover, $p_3 \geq p_5$ since the probability that two segments intersect decreases with the distance between their centers. Also, note that $p_5$ is a constant independent of $\ell$, since the probability that two segments of length $\ell$ intersect given that their centers are $\ell/2$ apart is equal to the probability that two segments of length $x\ell$ intersect, given that their centers are $x\ell/2 $ apart, for any $x > 0$ (including $x=1/\ell$). Finally, in order for event $Y$ to occur, the center of $s_2$ would have to lie in a disk of radius $\ell/2$ centered at the center of $s_1$; since at least a quarter of such a disk intersects the unit square (this happens when the center of $s_1$ is in a corner of the unit square), it follows that $p_4 \geq \frac{1}{4} \pi (\ell / 2)^2$. Thus, $p_1 \geq p_2\geq p_4p_5\geq \frac{p_5 \pi}{16} \ell^2 = \Omega(\ell^2)$.

Now, $p(M)= \sum_{\{s_1,s_2\}\subset M} X_{\{s_1,s_2\}}$, where $X_{\{s_1,s_2\}}$ is the indicator random variable for the event that segments $s_1$ and $s_2$ intersect. From the above argument, $E[X_{\{s_1,s_2\}}]=\Omega(\ell^2)$. By linearity of expectation, $E[p(M)]=\sum_{\{s_1,s_2\}\subset M}E[X_{\{s_1,s_2\}}] = \Omega(m^2 \ell^2)$. Moreover, by Proposition \ref{prop_T_intersecting}, $E[p(M)]=O(m^2\ell^2)$; thus, $E[p(M)]=\theta(m^2\ell^2)$. 
\end{proof}

\begin{corollary}
Let $M$ be a Buffon set of segments of length at most $\left(\frac{a}{m}\right)^{3/4}$, for any constant $a>0$. Then, the expected number of $K_3$ subgraphs of $G_M$ is $O(1)$.
\end{corollary}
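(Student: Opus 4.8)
The plan is to reduce the count of $K_3$ subgraphs of $G_M$ to the count of triples of segments of $M$ that mutually intersect, and then invoke Proposition \ref{prop_T_intersecting} with $t=3$ together with the length hypothesis. The point is that Proposition \ref{prop_T_intersecting} already controls the expected number of mutually intersecting $t$-tuples, so the only genuinely new work is the combinatorial observation linking triangles in $G_M$ to such triples.

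First I would establish the key reduction: \emph{every} $K_3$ subgraph of $G_M$ is carried by a triple of pairwise intersecting segments of $M$. A $K_3$ consists of three mutually adjacent vertices $u,v,w$, and each of the three edges $uv$, $vw$, $uw$ lies along a single segment of $M$ (by definition of $G_M$). No two of these edges can lie on the same segment: if, say, $uv$ and $vw$ lay on one segment, then $u,v,w$ would be collinear with $v$ between $u$ and $w$, so $u$ and $w$ could not be adjacent in $G_M$ (the intervening point $v$ would lie between them), contradicting that $uvw$ is a triangle. Hence the three edges lie on three distinct segments $s_1,s_2,s_3$, and these segments intersect pairwise at $u,v,w$, so $\{s_1,s_2,s_3\}$ is a mutually intersecting triple. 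Moreover, since any two distinct segments of $M$ meet in at most one point (collinear intersecting segments being merged into one), a given mutually intersecting triple determines at most one candidate triangle, namely its three pairwise intersection points. Thus distinct $K_3$ subgraphs correspond to distinct mutually intersecting triples, and the number of $K_3$ subgraphs of $G_M$ is bounded above by the number of mutually intersecting triples of segments.

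Next I would apply Proposition \ref{prop_T_intersecting} with $t=3$, which gives that the expected number of mutually intersecting triples is $O(m^3\ell^4)$ (since $2t-2=4$). Combining this with the reduction of the previous paragraph, the expected number of $K_3$ subgraphs of $G_M$ is $O(m^3\ell^4)$. Finally, I would substitute the hypothesis $\ell \le (a/m)^{3/4}$, so that $\ell^4 \le (a/m)^3 = a^3/m^3$, and therefore $m^3\ell^4 \le a^3 = O(1)$, which yields the claimed bound.

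I expect the main obstacle to be the combinatorial/geometric reduction step, i.e.\ verifying carefully that each triangle of $G_M$ must use three \emph{distinct} mutually intersecting segments and that the induced map from $K_3$ subgraphs to segment triples is injective; once this is in place, the remainder is a direct appeal to Proposition \ref{prop_T_intersecting} and a one-line substitution of the length bound.
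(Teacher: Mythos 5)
Your proposal is correct and follows essentially the same route as the paper: bound the number of $K_3$ subgraphs of $G_M$ by the number of mutually intersecting triples of segments, apply Proposition \ref{prop_T_intersecting} with $t=3$ to get an expected count of $O(m^3\ell^4)$, and substitute $\ell\leq\left(\frac{a}{m}\right)^{3/4}$ to conclude $O(1)$. The only difference is that you carefully verify the injection from triangles to segment triples (three distinct segments, at most one triangle per triple), a point the paper asserts in a single parenthetical remark.
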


\begin{proof}
Each $K_3$ subgraph in $G_M$ corresponds to a distinct triple of mutually intersecting segments in $M$ (but not vice versa). By Proposition \ref{prop_T_intersecting}, the expected number of triples of mutually intersecting segments in $M$ is $O(m^3\ell^{4})$. Since $\ell\leq\left(\frac{a}{m}\right)^{3/4}$ and $a$ is a constant independent of $m$, it follows that the expected number of $K_3$ subgraphs in $G_M$ is $O(1)$.
\end{proof}

The \emph{complexity} of a set of segments $M$ is the sum of the vertices, edges, and bounded faces of $G_M$. Below we derive a bound on the expected complexity of a Buffon set of segments that is tight up to a constant factor.

\begin{theorem}
Let $M$ be a Buffon set of segments. The expected complexity of $M$ is $\theta(m^2 \ell^2+m)$. 
\end{theorem}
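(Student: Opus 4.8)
The plan is to reduce the expected complexity to the expected number of intersection points, which is already controlled by Theorem~\ref{thm_exp_p_buff}. Recall that the complexity of $M$ is $n(M)+e(M)+c(M)$. First I would observe that, with probability $1$ over the random placement of the segments, the configuration is \emph{simple}: no two endpoints coincide, no endpoint is an intersection point, and no three segments pass through a common point. Indeed, each such coincidence imposes a nontrivial algebraic constraint on the (continuously distributed) centers and angles, and hence occurs with probability zero; a union of finitely many such events still has probability zero. It therefore suffices to bound the complexity for simple configurations.

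Next I would compute $n$ and $e$ exactly in terms of $p$ and $m$ for a simple configuration. Since all $2m$ endpoints are distinct and disjoint from the $p$ intersection points, $n(M)=|P(M)\cup J(M)|=p(M)+2m(M)$. For the edges, a segment carrying $k$ intersection points in its relative interior contributes exactly $k+1$ edges of $G_M$ (it has $k+2$ vertices on it); summing over all segments and using that each intersection point lies on exactly two segments, so that $\sum_s k_s = 2p(M)$, gives $e(M)=2p(M)+m(M)$. Finally, Euler's formula for a plane graph with $k$ connected components yields $c(M)=e(M)-n(M)+k$, whence $0\le c(M)\le e(M)$ (the lower bound is trivial, the upper bound uses $1\le k\le n(M)$). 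Combining these, for every simple configuration
\[
3p(M)+3m(M)\;=\;n(M)+e(M)\;\le\;n(M)+e(M)+c(M)\;\le\;n(M)+2e(M)\;=\;5p(M)+4m(M),
\]
so the complexity equals $\theta\big(p(M)+m(M)\big)$ pointwise, with absolute constants.

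It then remains to take expectations. Since the complexity lies between $3(p+m)$ and $5(p+m)$ almost surely, linearity of expectation gives $E[n(M)+e(M)+c(M)]=\theta\big(E[p(M)]+m\big)$. Applying Theorem~\ref{thm_exp_p_buff}, $E[p(M)]=\theta(m^2\ell^2)$, and therefore $E[n(M)+e(M)+c(M)]=\theta(m^2\ell^2+m)$, as claimed; note that the additive $m$ term is genuine, arising from the $2m$ endpoints and the $m$ segment-edges that are present even when $\ell$ is so small that intersections are rare. The only real subtlety is the probability-zero genericity argument, which is what lets me upgrade the a~priori inequalities $n\le p+2m$ and $e\le 2p+m$ to exact identities and keep the pointwise estimate tight from both sides; everything else is elementary planar-graph bookkeeping combined with the previously established bound on $E[p]$.
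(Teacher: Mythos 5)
Your proposal is correct and follows essentially the same route as the paper's proof: almost-sure non-degeneracy of the random configuration yields the identities $n(M)=p(M)+2m(M)$ and $e(M)=2p(M)+m(M)$, Euler's formula controls $c(M)$, and Theorem~\ref{thm_exp_p_buff} supplies $E[p(M)]=\theta(m^2\ell^2)$. The only cosmetic difference is that you sandwich $c(M)$ between $0$ and $e(M)$, whereas the paper applies Euler's formula per connected component to get the exact count $c(M)=p(M)-m(M)+t$ with $1\leq t\leq m$; both give the claimed $\theta(m^2\ell^2+m)$.
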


\begin{proof}
In a Buffon set, the expected number of points which are both endpoints of segments and intersection points is zero, and the expected number of intersection points where more than $2$ segments meet is zero. Thus, in expectation, $n(M) = p(M)+j(M)=p(M)+2m(M)$, and $e(M)=\sum_{s\in M} (\text{number of intersection points in } s \text{ plus } 1)= 2p(M)+m(M)$.

Let $M_1,\ldots,M_t$ be the connected components of $M$. By the argument above, in expectation, for $1\leq i\leq t$, $n(M_{i})=p(M_i)+2m(M_i)$ and $e(M_i)=2p(M_i)+m(M_i)$. By Euler's formula, the expected number of faces of $M_{i}$ (including the outer face) is $(2p(M_{i})+m(M_{i}))+2-(p(M_{i})+2m(M_{i})) = p(M_{i})-m(M_{i})+2$. Counting only the bounded faces and summing over $i$, we have $c(M)=\sum_{i=1}^{t} (p(M_i)-m(M_i)+1)=p(M)-m(M)+t$. Thus, by linearity of expectation, the expected complexity of $M$ is $E[n(M)]+E[e(M)]+E[c(M)]=4E[p(M)]+2m(M)+t$. Since by Theorem~\ref{thm_exp_p_buff}, $E[p(M)]=\theta(m^2 \ell^2)$, and since $1\leq t\leq m$, it follows that the expected complexity of $M$ is $\theta(m^2 \ell^2+m)$. 
\end{proof}


Finally, we give necessary and sufficient conditions for a Buffon set of segments to have no intersections (i.e., for $G_M$ to be a perfect matching) with probability approaching $1$ as the number of segments increases. We begin with a technical lemma.

\begin{lemma}
\label{lemma_buff_seg}
Let $m$ and $t$ be positive integers, $m\geq 2$, $m\geq t$, and let $x \in [0, 1/m]$. Then, \[\prod_{i=1}^{t}(1-ix)\geq 1-\frac{t(t+1)}{2}x.\]
\end{lemma}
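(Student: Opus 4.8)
The plan is to recognize the stated bound as an instance of the classical Weierstrass product inequality. First I would rewrite the right-hand side using $\frac{t(t+1)}{2}=\sum_{i=1}^{t}i$, so that the claim becomes $\prod_{i=1}^{t}(1-a_i)\geq 1-\sum_{i=1}^{t}a_i$ with $a_i=ix$. The hypotheses then serve exactly to place each factor in the right range: since $1\leq i\leq t\leq m$ and $0\leq x\leq 1/m$, we have $0\leq a_i=ix\leq tx\leq mx\leq 1$, so every factor $1-a_i$ lies in $[0,1]$. This nonnegativity of the factors is the structural fact that drives the whole argument.

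I would then prove the inequality by induction on $t$ (for the fixed $m$ and $x\in[0,1/m]$, over all $1\leq t\leq m$). The base case $t=1$ is the identity $1-x\geq 1-x$. For the inductive step, assume $\prod_{i=1}^{t}(1-ix)\geq 1-x\sum_{i=1}^{t}i$, where $t<m$ so that $t+1\leq m$ and hence $1-(t+1)x\geq 1-(t+1)/m\geq 0$. Multiplying the inductive hypothesis by this nonnegative quantity preserves the inequality and gives
\[
\prod_{i=1}^{t+1}(1-ix)\geq\left(1-(t+1)x\right)\left(1-x\sum_{i=1}^{t}i\right)=1-x\sum_{i=1}^{t+1}i+(t+1)x^{2}\sum_{i=1}^{t}i.
\]
The final term is nonnegative, so discarding it yields $\prod_{i=1}^{t+1}(1-ix)\geq 1-x\sum_{i=1}^{t+1}i=1-\frac{(t+1)(t+2)}{2}x$, which completes the step.

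The argument is essentially routine, and the only point demanding care is the sign check before multiplying through by $1-(t+1)x$: were this factor negative the inequality would reverse, which is precisely why the bound $t+1\leq m$ together with $x\leq 1/m$ is needed (the auxiliary cross-term's nonnegativity uses only $x\geq 0$). An equivalent and perhaps cleaner packaging would be to first prove the general statement $\prod_{i=1}^{n}(1-a_i)\geq 1-\sum_{i=1}^{n}a_i$ for arbitrary $a_i\in[0,1]$ by induction on $n$, and only afterward specialize to $a_i=ix$; this isolates the combinatorial core from the particular values and makes the role of the hypotheses transparent. In either form, I do not anticipate any genuine obstacle.
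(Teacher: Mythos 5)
Your proof is correct, but it takes a genuinely different route from the paper's. The paper argues analytically: it sets $f(x)=\prod_{i=1}^{t}(1-ix)$, computes $f'$ and $f''$ explicitly, and applies Taylor's theorem with Lagrange remainder, $f(x)=f(0)+f'(0)x+\tfrac{f''(r)}{2}x^{2}$ for some $r\in(0,x)$; since $f(0)=1$, $f'(0)=-\tfrac{t(t+1)}{2}$, and every factor $1-kr$ with $k\leq t\leq m$ and $r<x\leq 1/m$ is nonnegative (making $f''(r)$ a sum of products of nonnegative numbers, hence $f''(r)\geq 0$), the bound follows at once. You instead recognize the statement as the Weierstrass product inequality and run an induction on $t$, multiplying the inductive hypothesis by the nonnegative factor $1-(t+1)x$ and discarding the nonnegative cross term $(t+1)x^{2}\sum_{i=1}^{t}i$; your bookkeeping in the inductive step is accurate, and you correctly isolate the one place the hypotheses $t+1\leq m$, $x\leq 1/m$ are needed, namely the sign check on $1-(t+1)x$ — the same structural fact the paper uses to conclude $f''(r)\geq 0$. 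Your approach is more elementary (no calculus) and, in the packaging $\prod_{i=1}^{n}(1-a_i)\geq 1-\sum_{i=1}^{n}a_i$ for arbitrary $a_i\in[0,1]$, strictly more general; the paper's Taylor argument is more compact once the derivative formulas are written down and produces the coefficient $\tfrac{t(t+1)}{2}$ automatically as $-f'(0)$ rather than by summing $\sum_{i=1}^{t}i$. Either proof fully establishes the lemma as used in Theorem \ref{lemma_no_intersections}.
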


\begin{proof}
Let $f(x) = \prod_{i=1}^{t}(1-ix)$. Then, 
\begin{eqnarray*}
f'(x) &=& \sum_{1\leq i\leq t} (-i)\prod_{j\in \{1,\ldots,t\}\backslash\{i\}}(1-jx),\\ 
f''(x) &=& \sum_{1\leq i<j\leq t}  2ij \prod_{k\in \{1,\ldots,t\}\backslash\{i,j\}}(1-kx).
\end{eqnarray*}
By Taylor's Theorem, $f(x) = f(0)+ f'(0) x+\frac{f''(r)}{2} x^2$, for some $r \in (0, x)$. Note that $f(0) = 1$ and $f'(0) = -\frac{t(t+1)}{2}$. Moreover, since $r< x\leq 1/m$ and $k\leq t\leq m$, $f''(r)$ is a sum of products of nonnegative real numbers, so $f''(r)\geq 0$. Thus, $f(x)\geq 1-\frac{t(t+1)}{2}x$.
\end{proof}

\begin{theorem}
\label{lemma_no_intersections}
Let $M$ be a Buffon set of segments of length $\ell$. Then, as $m\rightarrow \infty$, $Pr[p(M)=0]\rightarrow 1$ if and only if $\ell=o(1/m)$.
\end{theorem}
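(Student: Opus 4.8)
The plan is to prove the two implications separately via a first/second-moment dichotomy, using the expectation estimate $E[p(M)]=\theta(m^2\ell^2)$ from Theorem~\ref{thm_exp_p_buff} together with the product inequality of Lemma~\ref{lemma_buff_seg}. Throughout I write $q$ for the probability that two independently placed Buffon segments intersect; the argument of Theorem~\ref{thm_exp_p_buff} shows $q=\theta(\ell^2)$, and the same disk estimate used in Proposition~\ref{prop_T_intersecting} (the center of a crossing segment must lie within distance $\ell$ of the center of the fixed one) gives a \emph{uniform} bound $\Pr[s\text{ meets }s']\le c_0\ell^2$ for a random segment $s$ and \emph{any} fixed segment $s'$, where $c_0$ is an absolute constant (one may take $c_0=\pi$).

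For the direction $\ell=o(1/m)\Rightarrow\Pr[p(M)=0]\to1$, I would place the $m$ segments one at a time and bound the probability that no crossing ever appears. Let $B_i$ be the event that $s_i$ avoids $s_1,\dots,s_{i-1}$. Since $s_i$ is independent of the earlier segments, for every fixed admissible placement of $s_1,\dots,s_{i-1}$ a union bound gives $\Pr[\overline{B_i}]\le (i-1)c_0\ell^2$, and hence $\Pr[B_i\mid B_1,\dots,B_{i-1}]\ge 1-(i-1)c_0\ell^2$ uniformly. Multiplying the conditional probabilities yields
\[
\Pr[p(M)=0]=\prod_{i=2}^{m}\Pr[B_i\mid B_1,\dots,B_{i-1}]\ \ge\ \prod_{k=1}^{m-1}\bigl(1-k\,c_0\ell^2\bigr).
\]
When $\ell=o(1/m)$ we have $c_0\ell^2\le 1/m$ for large $m$, so Lemma~\ref{lemma_buff_seg} (with $t=m-1$ and $x=c_0\ell^2$) bounds the product below by $1-\tfrac{(m-1)m}{2}c_0\ell^2=1-\theta(m^2\ell^2)$, which tends to $1$. (Markov's inequality applied to $E[p(M)]=O(m^2\ell^2)$ gives the same conclusion; the product form is the quantitative version supplied by the lemma.)

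For the converse I argue the contrapositive: if $\ell\neq o(1/m)$ then $\Pr[p(M)=0]\not\to1$. Unwinding the definition, there is $\varepsilon_0>0$ and an infinite set of $m$ along which $m\ell\ge\varepsilon_0$, and I restrict to this subsequence. A first-moment bound is useless here, since $E[p(M)]=\theta(m^2\ell^2)$ being bounded below by a positive constant does not by itself force a crossing to occur, so I would invoke the second-moment (Paley--Zygmund) inequality $\Pr[p(M)\ge1]\ge E[p(M)]^2/E[p(M)^2]$. Writing $p(M)=\sum_{\{i,j\}}X_{\{i,j\}}$ with $X_{\{i,j\}}$ the crossing indicator, I expand $E[p(M)^2]$ into diagonal terms ($=E[p(M)]=\theta(m^2\ell^2)$), terms from pairs of index-pairs sharing one segment, and terms from disjoint index-pairs. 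Disjoint pairs are independent, so their total is at most $E[p(M)]^2=\theta(m^4\ell^4)$; a pair sharing a single segment $s_i$ contributes, after conditioning on $s_i$ and applying the uniform bound $c_0\ell^2$ twice, $O(\ell^4)$, and there are $O(m^3)$ of these, for a total $O(m^3\ell^4)$. Along the chosen subsequence $m^2\ell^2\ge\varepsilon_0^2$, so both $m^2\ell^2$ and $m^3\ell^4$ are $O(m^4\ell^4)$ (indeed $o$ in the second case), whence $E[p(M)^2]=\theta(m^4\ell^4)=\theta\bigl(E[p(M)]^2\bigr)$ and $\Pr[p(M)\ge1]$ stays bounded below by a positive constant. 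Thus $\Pr[p(M)=0]\not\to1$, completing the contrapositive.

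The main obstacle is precisely the dependence among the $\binom{m}{2}$ crossing events: they are not independent, so neither direction reduces to a clean product $(1-q)^{\binom{m}{2}}$. I expect the second-moment computation in the converse to be the crux, and the delicate point is verifying that the one-shared-segment terms are genuinely lower order, which is what makes the variance comparable to the squared mean and lets Paley--Zygmund deliver a constant lower bound on $\Pr[p(M)\ge1]$. The only bookkeeping I would be careful about is the uniform disk bound $c_0\ell^2$ (including boundary effects near the edges of the unit square, which only decrease the probability) and the validity range $x\le 1/m$ required by Lemma~\ref{lemma_buff_seg}, both of which hold comfortably in the regimes that matter.
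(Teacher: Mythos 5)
Your proof is correct, but its converse direction takes a genuinely different route from the paper's. The forward direction coincides in substance: the paper simply applies Markov's inequality to $E[p(M)]=O(m^2\ell^2)$ from Theorem~\ref{thm_exp_p_buff}, which you note parenthetically; your sequential-placement product bound, fed through Lemma~\ref{lemma_buff_seg} with $x=c_0\ell^2$, is a correct quantitative alternative (amusingly, the paper uses that lemma only in the converse, while you use it only in the forward direction). For the converse, the paper uses no second moments at all: writing $M_t=s_1\cup\cdots\cup s_t$, it decomposes $\{p(M)>0\}$ into the disjoint events $\{p(M_t)>0\}\cap\{p(M_{t-1})=0\}$ (the first crossing appears at step $t$), lower-bounds each term by conditioning on the event $X_t$ that the first $t-1$ centers are pairwise $\ell$-separated, keeps $Pr[X_t]$ near $1$ via the chain rule and Lemma~\ref{lemma_buff_seg}, bounds $Pr[p(M_t)>0 \mid X_t]\geq (t-1)\frac{p_5\pi}{16}\ell^2$, and sums explicitly to get a positive constant for $\ell=\alpha/m$, finishing with monotonicity of $Pr[p(M)>0]$ in $\ell$. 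Your Paley--Zygmund argument replaces this with a variance computation, and your identification of the crux is exactly right: the only delicate term is the $O(m^3)$ pairs of index-pairs sharing one segment, each contributing $O(\ell^4)$ by conditional independence given the shared segment together with the uniform disk bound $c_0\ell^2$ (the same estimate underlying Proposition~\ref{prop_T_intersecting}), so $E[p(M)^2]=\theta\bigl(E[p(M)]^2\bigr)$ once $m\ell$ is bounded below, and $Pr[p(M)\geq 1]$ stays bounded away from $0$. Each approach buys something: yours is more standard and modular, reuses the stated expectation bounds as a black box, and treats the negation of $\ell=o(1/m)$ honestly via subsequences, whereas the paper analyzes only $\ell=\alpha/m$ with $\alpha\leq\sqrt{1/\pi}$ and appeals to monotonicity in $\ell$ for $\ell=\Omega(1/m)$, glossing the oscillating regime that your subsequence argument handles directly; conversely, the paper's first-crossing decomposition avoids all independence bookkeeping in the second moment and yields an explicit constant lower bound.
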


\begin{proof}
Let the segments in $M$ be $s_{1}, \dots, s_{m}$, and for $1\leq t\leq m$, let $M_t=s_1\cup\ldots\cup s_t$. 
If $\ell = o(\frac{1}{m})$, then $Pr[p(M)>0]=Pr[p(M)\geq 1]\leq \frac{E[p(M)]}{1}\rightarrow 0$ as $m\rightarrow \infty$, where the last inequality follows from Theorem \ref{thm_exp_p_buff} and from Markov's inequality. Thus, if $\ell = o(\frac{1}{m})$, then $Pr[p(M)=0]\rightarrow 1$ as $m \rightarrow \infty$. 

Now, suppose that $\ell=\frac{\alpha}{m}$ for some $\alpha\in (0, \sqrt{1/\pi}]$. Let $X_t$ be the event that the centers of $s_{1}, \dots, s_{t-1}$ are all at least distance $\ell$ from each other. Since $X_t$ implies that $p(M_{t-1})=0$ with probability 1, and by the definition of conditional probability, we have 
\begin{equation}
\label{eqn_buff_cond}
Pr[(p(M_t)>0) \cap (p(M_{t-1})=0)] \geq Pr[(p(M_t)>0) \cap X_t]= Pr[X_t] \,Pr[(p(M_t)>0) \,|\, X_t].
\end{equation} 
The probability that the center of $s_{2}$ is at least $\ell$ away from the center of $s_{1}$ is at least the area of the unit square minus the area of a disk of radius $\ell$; hence,
\[Pr[X_3]\geq 1- \pi \ell^2.\]
Similarly, the probability that the center of $s_t$ is outside the disks of radius $\ell$ around the centers of $s_1,\ldots,s_{t-1}$ is at least $1-(t-1) \pi \ell^2$, so
\[Pr[X_{t+1}\,|\,X_{t}] \geq 1-(t-1) \pi \ell^2.\]
Thus, by the chain rule for probabilities, 
\begin{equation}
\label{eq_buffseg1}
Pr[X_t] =\prod_{i=3}^{t}Pr[X_i\,|\,X_{i-1}]\geq\prod_{i=1}^{t-2} (1- i \pi \ell^2) \geq 1-\frac{(t-1)(t-2)}{2}\pi \ell^2,
\end{equation}
where the last inequality follows from Lemma \ref{lemma_buff_seg}. Note that since $\alpha\leq \sqrt{1/\pi}$ and $\ell=\alpha/m$, it follows that $\pi\ell^2 \leq 1/m$ for all $m\geq 1$, so the conditions of Lemma \ref{lemma_buff_seg} are satisfied. 

Moreover, as shown in the proof of Theorem \ref{thm_exp_p_buff}, the probability that two segments of length $\ell$ intersect is at least $\frac{p_5\pi}{16}\ell^2$, where $p_5$ is the probability that two segments of length 1 intersect, given that their centers are distance $0.5$ apart. By the same argument, if the center of $s_{t}$ is within distance $\ell/2$ of the center of some other segment $s_i$, $1\leq i\leq t-1$, the conditional probability that $s_{t}$ intersects $s_i$ is at least $p_5$. Thus, 
\begin{equation}
\label{eq_buffseg2}
Pr[(p(M_t)>0) \,|\, X_t] \geq (t-1)\frac{p_5\pi}{16}\ell^2.
\end{equation}
Substituting \eqref{eq_buffseg1} and \eqref{eq_buffseg2} into \eqref{eqn_buff_cond}, we obtain
\[Pr[(p(M_t)>0) \cap (p(M_{t-1})=0)] \geq \left(1-\frac{(t-1)(t-2)}{2}\pi \ell^2\right)(t-1)\frac{p_5\pi}{16}\ell^2.\]
The event $(p(M)>0)$ is the disjoint union of the events $(p(M_t)>0) \cap (p(M_{t-1})=0)$,  $2\leq t\leq m$. Thus,  $Pr[p(M)>0]$ can be computed as follows:
\begin{eqnarray*}
Pr[p(M)>0]&=&\sum_{t = 2}^{m} Pr[(p(M_t)>0) \cap (p(M_{t-1})=0)]\\
&\geq&\frac{p_5\pi}{16}\ell^2 \sum_{t = 1}^{m-1} t\left(1-\frac{t(t-1)}{2} \pi \ell^2\right) \\
&=& \frac{p_5\pi}{16}\ell^2 \left( \sum_{t=1}^{m-1}t-\frac{\pi\ell^2}{2}\sum_{t=1}^{m-1}t^3+\frac{\pi\ell^2}{2}\sum_{t=1}^{m-1}t^2\right)\\
&=& \frac{p_5\pi}{16}\ell^2 \left( \frac{m(m-1)}{2}-\frac{\pi\ell^2}{2}\frac{(m-1)^2m^2}{4}+\frac{\pi\ell^2}{2}\frac{(m-1)m(2m-1)}{6}\right)\\
&\geq& \frac{p_5\pi}{16}\ell^2 \left(\frac{m(m-1)}{2} - \frac{\pi \ell^2m^4}{8}\right)\\
&=&\frac{p_5\pi}{16}\left(\frac{\alpha^2}{2}-\frac{\pi\alpha^4}{8}-\frac{\alpha^2}{2m}\right)>0,
\end{eqnarray*}
for $m$ sufficiently large. Thus, if $\ell = \frac{\alpha}{m}$ for $\alpha\leq\sqrt{1/\pi}$, then $Pr[p(M)>0]\not\rightarrow 0$ as $m\rightarrow \infty$. However, since increasing $\ell$ cannot decrease $Pr[p(M)>0]$, it follows that if $\ell = \Omega(\frac{1}{m})$, then $Pr[p(M)>0]\not\rightarrow 0$ as $m\rightarrow \infty$.
\end{proof}

\section{Erd\H{o}s-Faber-Lov\'asz conjecture for segments}
\label{section_efl}
The following is a long-standing conjecture of Erd\H{o}s, Faber, and Lov\'asz:
\begin{conjecture}[EFL Conjecture \cite{efl}]
Let $G$ be a graph consisting of $m$ copies of $K_m$, every pair of which has at most one vertex in common. Then, $\chi(G)=m$.
\end{conjecture}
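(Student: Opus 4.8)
The lower bound $\chi(G)\geq m$ is immediate, since each copy of $K_m$ is already a clique on $m$ vertices and so forces $m$ distinct colors among its members. All of the difficulty lies in the upper bound $\chi(G)\leq m$, and I should say at the outset that this is the famous open case of the conjecture, so what follows is the line of attack I would pursue rather than a claim of a short argument. The plan is to first remove the geometric and graph-theoretic clutter by passing to the standard dual reformulation, then attempt a coloring argument there.

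First I would associate to $G$ a hypergraph $H$ whose vertices are the $m$ cliques $C_1,\ldots,C_m$ and whose hyperedges are the vertices of $G$: a vertex $v\in V(G)$ becomes the hyperedge consisting of exactly those $C_i$ that contain $v$. Because any two cliques meet in at most one vertex, any two of these hyperedges meet in at most one vertex of $H$, so $H$ is a \emph{linear} hypergraph on $m$ vertices. A coloring of $V(G)$ in which every clique is rainbow is precisely a proper edge-coloring of $H$ (two hyperedges sharing a vertex $C_i$ of $H$ correspond to two vertices of $G$ adjacent inside $C_i$), so $\chi(G)=\chi'(H)$, and the conjecture becomes the clean statement that a linear hypergraph on $m$ vertices has chromatic index at most $m$.

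Next I would test the naive strategies in order to locate the real content. A greedy coloring fails badly: a vertex of $G$ lying in $k$ cliques has degree $k(m-1)$, which can be of order $m^2$, so the bound $\Delta+1$ is hopeless. The realistic route is the semirandom or \emph{nibble} method applied to $H$: in each round color a small random fraction of the hyperedges, discard the conflicting ones, and track the number of colors still available at each vertex of $H$ via concentration inequalities; this yields $\chi'(H)\leq (1+o(1))m$. To close the gap to the exact value $m$, I would classify the cliques according to how much they overlap, separating the few ``large'' cliques that meet many others from the many ``small'', nearly disjoint ones, and treat the regimes by different means, while setting aside a small reserved palette of colors to serve as an absorbing structure that can recolor the $o(m)$ leftover hyperedges at the end.

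The main obstacle is exactly that the target $m$ is attained with no slack: extremal configurations such as those arising from finite projective planes force $\chi'(H)=m$ exactly, so any argument producing $m+o(m)$ colors must be completed by an absorption step that eliminates the final lower-order error without introducing any new conflict. Guaranteeing that the reserved colors suffice \emph{simultaneously} at every vertex of $H$, rather than merely on average, is where the heart of the difficulty lies, and it is precisely this global-versus-local tension that keeps a clean, self-contained proof out of reach.
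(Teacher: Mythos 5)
There is nothing in the paper to compare against here: the statement is presented as a \emph{conjecture} (the Erd\H{o}s--Faber--Lov\'asz Conjecture, cited to \cite{efl}), and the paper never attempts a proof of it. The authors use it only as motivation, and what they actually prove are geometric relaxations --- the segment and line versions (Conjectures \ref{efl_line} and \ref{efl_segment}) for special families such as segment cactus graphs, segment trees, and line $K_3$-free graphs. Your proposal is candid on this point, and the parts of it that are actual mathematics are correct: the lower bound $\chi(G)\geq m$ is immediate, and the dual reformulation --- vertices of $H$ are the cliques, hyperedges of $H$ are the vertices of $G$, linearity of $H$ follows from the pairwise-intersection condition, and $\chi(G)=\chi'(H)$ --- is the standard and accurate translation of the conjecture into the statement that every linear hypergraph on $m$ vertices has chromatic index at most $m$.

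The genuine gap is that everything after the reformulation is a research program, not an argument: the nibble machinery you invoke yields Kahn's asymptotic bound $\chi'(H)\leq(1+o(1))m$, and the step that would convert this into the exact bound $m$ --- your ``reserved palette'' absorption --- is described only as a goal, with no construction of the absorbing structure and no argument that the reserved colors can simultaneously accommodate the leftover hyperedges at every vertex of $H$. As you yourself note, projective-plane-type extremal examples show there is zero slack, so this absorption step is not a routine finishing move; it is the entire content of the conjecture. (For context: this outline is essentially the strategy by which Kang, Kelly, K\"uhn, Methuku, and Osthus eventually resolved the conjecture for all sufficiently large $m$, but executing it required substantial new ideas well beyond what is sketched here, and the present paper predates that work.) So the proposal should be read as a correct reduction plus an accurate map of the difficulty, but it does not prove the statement, and no proof of it exists in the paper either.
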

\noindent 
Let $M$ be a set of curves (or lines, or segments). An \emph{EFL coloring of $M$ with $k$ colors} is a function $f:P(M)\rightarrow \{1,\ldots,k\}$ such that if two intersection points $p_1$, $p_2$ belong to the same curve (respectively line or segment), then $f(p_1)\neq f(p_2)$. The EFL Conjecture can be restated in a geometric form as follows:
\begin{conjecture}[EFL Conjecture]
Let $M$ be a set of $m$ curves such that every pair of curves has at most one point in common. Then, $M$ has an EFL coloring with $m$ colors.
\end{conjecture}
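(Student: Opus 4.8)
The statement is the geometric restatement of the Erd\H{o}s--Faber--Lov\'asz conjecture, so the realistic goal of a ``proof'' here is not to settle this famous open problem but to verify that the geometric formulation is genuinely equivalent to the combinatorial one stated at the start of the section; the plan is therefore to exhibit a color-preserving correspondence between curve families and the graphs appearing in the original conjecture. First I would pass from the curves to a graph coloring problem. Given a set $M$ of $m$ curves with every pair sharing at most one point, form the graph $H$ on vertex set $P(M)$ in which two intersection points are adjacent exactly when some curve of $M$ contains both of them. By definition, an EFL coloring of $M$ with $k$ colors is precisely a proper vertex coloring of $H$ with $k$ colors, so the conjecture is equivalent to the assertion $\chi(H)\le m$.

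Next I would identify the structure of $H$. Each curve $s\in M$ contributes a clique on the intersection points lying on $s$; since $s$ meets each of the other $m-1$ curves in at most one point, this clique has order at most $m-1$. Because any two curves share at most one point, any two of these $m$ cliques share at most one vertex---a point through which several curves pass is simply a single vertex common to several cliques, which is permitted since each individual \emph{pair} of cliques still meets in at most one vertex. Thus $H$ is a union of $m$ cliques, each of order at most $m$, pairwise meeting in at most one vertex. To land exactly in the hypothesis of the combinatorial conjecture, I would enlarge each clique to a copy of $K_m$ by adjoining \emph{private} vertices (new vertices adjacent only to the members of that one clique). Adjoining private vertices cannot increase any pairwise intersection, so the resulting graph $G$ is a union of $m$ copies of $K_m$ pairwise sharing at most one vertex. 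Any proper $m$-coloring of $G$ restricts to a proper $m$-coloring of the induced subgraph $H$, so the combinatorial conjecture ($\chi(G)=m$) yields $\chi(H)\le m$, and the geometric conjecture follows.

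For the reverse implication, needed to assert full equivalence, I would realize a combinatorial instance geometrically by drawing one simple curve per copy of $K_m$ and routing the curves corresponding to cliques that share a vertex so that they cross at a common prescribed point; the freedom afforded by arbitrary simple curves makes the intersection pattern easy to prescribe, and a vertex belonging to several cliques becomes a single point carrying several curves. The one delicate point is that a vertex belonging to a \emph{single} clique is not an intersection point and so would not appear in $P(M)$; however, such a private vertex only relaxes the per-curve color constraint, so the extremal instances---those actually forcing $m$ colors---are exactly the configurations with no private vertices, which are the ones that realize cleanly as curves.

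The genuine obstacle is not this reduction, which is essentially bookkeeping, but the combinatorial EFL conjecture itself, a longstanding open problem (now known only for all sufficiently large $m$); the geometric version inherits this difficulty and cannot be proved unconditionally by elementary means. The technical subtleties worth checking carefully are precisely that allowing several curves through one point respects the ``at most one common point per pair'' hypothesis and corresponds to a single shared clique vertex, and that the clique orders in $H$ never exceed $m$ so that padding each clique up to $K_m$ is always possible.
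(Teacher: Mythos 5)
The statement you were asked to prove is a conjecture: it is the paper's geometric restatement of the Erd\H{o}s--Faber--Lov\'asz conjecture, and the paper offers no proof of it, nor any argument for the equivalence it implicitly asserts with the phrase ``can be restated.'' So your decision to verify the translation rather than attempt the open problem is the right reading of the task, and your forward direction is correct and rigorous: the graph $H$ on $P(M)$ is a union of $m$ cliques, one per curve, each of order at most $m-1$ (since each of the other $m-1$ curves contributes at most one point to a given curve), pairwise sharing at most one vertex because two curves share at most one point; padding each clique to $K_m$ with private vertices preserves the pairwise-intersection hypothesis, and restricting a proper $m$-coloring of the padded graph to $P(M)$ gives an EFL coloring of $M$ with $m$ colors. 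This establishes that the combinatorial conjecture implies the curve statement, which is all the paper needs, since the line and segment versions (Conjectures \ref{efl_line} and \ref{efl_segment}) are then obtained by specialization.

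The genuine gap is in your reverse direction. The assertion that ``the freedom afforded by arbitrary simple curves makes the intersection pattern easy to prescribe'' is not bookkeeping; it is a nontrivial topological realizability claim. You must draw $m$ simple curves so that two curves sharing a clique vertex meet \emph{exactly} at that prescribed point and nowhere else (a second meeting would violate the at-most-one-common-point hypothesis), while vertex-disjoint pairs cross at most once. Once several curves are drawn, their union can contain Jordan cycles --- three pairwise-crossing arcs already bound a region --- and a later curve with prescribed points on both sides of such a cycle is forced to cross it, spending crossing budget it may not have. For comparison, configurations such as the Fano plane are not realizable by lines or even pseudolines, so if arbitrary curves do suffice it is only because of the extra flexibility together with the one free crossing allowed to vertex-disjoint pairs, and that requires an actual inductive construction, not an appeal to freedom. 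Separately, your treatment of private vertices via ``extremal instances'' should be replaced by the direct argument: given an EFL coloring of the realized configuration with $m$ colors, each clique has at most $m$ vertices and its shared vertices already carry distinct colors, so its private vertices can be colored injectively with the unused colors, yielding $\chi(G)\leq m$; the bound $\chi(G)\geq m$ is trivial from any single $K_m$. With the realizability step either proved or the equivalence claim weakened to the one implication you did establish, your write-up would be a sound companion to the paper's unproved restatement.
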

\noindent This conjecture can be relaxed to lines and segments instead of curves as follows:
\begin{conjecture}[Line EFL Conjecture]
\label{efl_line}
Let $M$ be a set of $m$ lines drawn in the plane. Then, $M$ has an EFL coloring with $m$ colors.
\end{conjecture}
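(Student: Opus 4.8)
The plan is to recast the problem as a coloring problem and isolate exactly where the geometry is needed. An EFL coloring of $M$ with $m$ colors is precisely a proper vertex coloring, with $m$ colors, of the \emph{conflict graph} $G^*$ whose vertices are the intersection points $P(M)$ and in which two points are adjacent whenever they lie on a common line of $M$. Equivalently, it is a proper edge coloring of the \emph{linear hypergraph} $\mathcal{H}$ whose vertices are the $m$ lines and whose hyperedges are the intersection points, each point mapped to the set of lines through it; the hypergraph is linear because two lines meet in at most one point, so two hyperedges share at most one vertex. It is worth noting that $G^*$ is exactly a union of $m$ cliques (one per line, each of size at most $m-1$) that pairwise share at most one vertex, i.e.\ a geometrically realizable instance of the EFL configuration itself; this both explains why the statement is plausible and warns that the general combinatorial version is hard.

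First I would dispose of the \emph{simple} case, where no three lines are concurrent. Then every hyperedge of $\mathcal{H}$ has size exactly $2$, so $\mathcal{H}$ is an ordinary graph on the $m$ lines (the subgraph of $K_m$ recording which pairs of lines intersect), with maximum degree at most $m-1$ since each line meets at most $m-1$ others. An EFL coloring is now literally a proper edge coloring of this graph, and Vizing's theorem supplies one with at most $\Delta+1\le (m-1)+1=m$ colors. This step uses no geometry beyond ``two lines meet once,'' and it is genuinely the right tool rather than a greedy one: in general position $G^*$ is the line graph of $K_m$, whose degeneracy is about $2m$, so a naive greedy/degeneracy bound would give roughly $2m$ colors and the edge-coloring viewpoint is essential.

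The crux, and the step I expect to be the main obstacle, is concurrency. When $d\ge 3$ lines pass through a point $p$, that single point must receive one color that simultaneously avoids the colors of all its neighbors on each of the $d$ lines through it; this is exactly the constraint that makes EFL difficult in general, so here we must exploit the geometry of \emph{lines} specifically. The approach I would take is to use two structural orders that a line arrangement provides for free: along each line the intersection points are \emph{linearly ordered} by position, and the lines themselves carry a \emph{cyclic order by slope}. Concretely, I would sweep the lines into the arrangement in slope order (equivalently, work with the allowable-sequence / wiring-diagram description, in which a concurrency is a contiguous block of wires reversing order simultaneously), maintaining a proper partial coloring and using the linear order along each line to decide the new colors, repairing clashes by alternating-path (Kempe-chain) recoloring or by a Hall-type matching when the new intersection points created by an inserted line are assigned colors. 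The hard part will be to guarantee that inserting a line never forces two of the already-colored concurrency points it passes through to clash; I expect this to require either an extremal choice of which line to insert (so that its new points are few and lie at one end of the existing lines) combined with a bounded recoloring, or a direct sweep argument in which the block-reversal structure caps the number of colors forbidden at each newly created point by $m-1$, so that one of the $m$ colors always remains free.
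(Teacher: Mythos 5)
You should first be aware that the statement you set out to prove is not a theorem of the paper at all: it is stated as Conjecture~\ref{efl_line} and is left open. The paper only proves an equivalence between the line and segment versions, and then verifies the conjecture (often in stronger forms) for special families --- segment trees and line $K_3$-free graphs with $w(M)$ colors, segment cacti with $m-1$ colors, Buffon sets with probability $1$ --- explicitly noting that pushing bounds on $\chi_E(M)$ toward $m(M)$ ``would be a step to proving the EFL Conjecture for general sets of segments and lines.'' Indeed, your own observation that the conflict graph $G^*$ is a union of $m$ cliques pairwise sharing at most one vertex shows the line version is literally an instance of the general EFL configuration, so a complete proof would be a genuine research contribution, not an exercise; any such proof must extract something from the geometry of lines beyond ``two lines meet at most once,'' since that axiom alone gives back the open combinatorial conjecture.

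Measured against that, your proposal has a correct and complete first half but an essential gap in the second. The reduction to edge-coloring the linear hypergraph $\mathcal{H}$ is right, and the simple (no three concurrent lines) case does follow from Vizing with $\Delta+1\le m$ colors; the paper's observation on Buffon sets handles the same case even more directly via the explicit coloring $(i+j)\bmod m$ attributed to Hindman. But the concurrency case --- which you correctly identify as the crux --- is only a program, not an argument. Your hoped-for quantitative claim, that the sweep ``caps the number of colors forbidden at each newly created point by $m-1$,'' is false as stated without recoloring: when a new line $\ell$ is inserted, a new point $q=\ell\cap\ell'$ must avoid the colors of all other points on $\ell$ \emph{and} all already-colored points on $\ell'$, and a point of multiplicity $d$ has up to $d(m-d)$ neighbors in $G^*$, far exceeding $m$. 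So a free color need not exist locally, and everything then rests on the Kempe-chain/Hall repair step, for which you give no mechanism, no invariant, and no termination argument --- and that repair step is exactly where the difficulty of EFL lives. As written, the proposal proves the conjecture only for simple arrangements, which is strictly weaker than several special cases the paper actually establishes (e.g., its line $K_3$-free result allows arbitrary concurrencies).
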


\begin{conjecture}[Segment EFL Conjecture]
\label{efl_segment}
Let $M$ be a set of $m$ segments drawn in the plane. Then, $M$ has an EFL coloring with $m$ colors.
\end{conjecture}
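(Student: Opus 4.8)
The plan is to recast the conjecture as a hypergraph edge-coloring problem and then isolate the one feature---concurrent segments---that separates it from a classical, solvable case. Build a hypergraph $\Gamma$ whose vertices are the $m$ segments of $M$ and whose hyperedges are the intersection points: the hyperedge for a point $p$ is the set of all segments passing through $p$. Because two non-collinear segments meet in at most one point (and collinear ones are merged), this is well defined, and the intersection points lying on a single segment $s$ are exactly the hyperedges incident to the vertex $s$; hence the maximum degree satisfies $\Delta(\Gamma)\le m-1$. An EFL coloring of $M$ with $m$ colors is precisely an assignment of colors to the hyperedges of $\Gamma$ so that hyperedges sharing a vertex receive distinct colors, i.e.\ a proper hyperedge-coloring of $\Gamma$ with $m$ colors. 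The goal is thus to show $\chi'(\Gamma)\le m$.

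First I would dispose of the generic case in which no three segments are concurrent. Then every intersection point lies on exactly two segments, so $\Gamma$ is an ordinary graph with $\Delta(\Gamma)\le m-1$, and Vizing's theorem gives $\chi'(\Gamma)\le \Delta(\Gamma)+1\le m$. For $m$ lines in general position this recovers the classical fact that $\Gamma\simeq K_m$, whose proper edge-colorings use $m-1$ or $m$ colors, so the Line EFL Conjecture holds in general position. This already settles the statement for all but a measure-zero set of configurations, and via the paper's perturbation arguments it suggests that the difficulty is concentrated entirely in the concurrencies.

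The main obstacle is precisely those concurrent points, where three or more segments meet: these turn $\Gamma$ into a genuine hypergraph, for which the Vizing bound fails and the chromatic index can exceed $\Delta(\Gamma)+1$. Two routes suggest themselves. The first is to observe that the conflict graph $G'$ on the intersection points---in which two points are adjacent iff they lie on a common segment---is exactly a union of $m$ cliques (one per segment, each of size at most $m-1$) that pairwise share at most one vertex. This is the clique formulation of the Erd\H{o}s--Faber--Lov\'asz conjecture itself, so the Segment EFL Conjecture is a special case of the general conjecture; invoking the known resolution of EFL for all sufficiently large $m$ yields the statement for large $m$, with the finitely many small cases checked directly. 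The second, more self-contained route is to remove concurrencies by the perturbation technique of the $K_3$-free proof---translating and rotating segments slightly so that every intersection point becomes simple, applying the Vizing argument to the perturbed arrangement, and then collapsing the perturbation.

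The hard part will be making this collapse legitimate. When perturbed points merge back into a single concurrent point, several colors are forced to coincide, and an $m$-coloring of the perturbed arrangement need not restrict to an EFL coloring of the original---indeed it generally will not, since the merged point inherits all of its segment-incidences at once. Any honest argument along the second route must therefore produce a coloring that is \emph{robust} under the collapse: among the $m$-colorings guaranteed at each perturbation stage, one must select one in which the colors assigned to the to-be-merged points already agree in the required way, and this choice must be made consistently across \emph{all} concurrencies rather than one at a time. Controlling that global consistency is exactly where the full strength of the Erd\H{o}s--Faber--Lov\'asz conjecture reappears, which is why I expect the reduction route to be the only presently complete one and the perturbation route to remain the genuine open difficulty.
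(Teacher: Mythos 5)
You are attempting to prove a statement that the paper itself presents only as a conjecture: there is no proof of the Segment EFL Conjecture in the paper, so there is nothing to match your attempt against. What the paper does prove is the equivalence of the line and segment versions, NP-completeness of the associated optimization problem, and the conjecture for special families (segment cacti with $m-1$ colors, segment trees and line $K_3$-free sets with $w(M)$ colors, sets whose induced embedding has no collinear edges with $4$ colors via the Four Color Theorem, and Buffon sets almost surely via Hindman's $(i+j) \bmod m$ coloring \cite{hindman}). Your two sound observations are already implicit in the paper: the conflict graph on $P(M)$ is a union of $m$ cliques, one per segment of size at most $m-1$, pairwise sharing at most one vertex --- which is exactly why the paper introduces the statement as a ``natural special case'' of the EFL Conjecture rather than as a theorem --- and your Vizing argument for arrangements with no three concurrent segments is the same generic case the paper dispatches for Buffon sets, where concurrencies occur with probability zero.

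The genuine gap lies in the one route you declare complete. Invoking the asymptotic resolution of EFL (Kang, Kelly, K\"uhn, Methuku, and Osthus) yields the statement only for all $m$ beyond an unspecified threshold; that threshold is non-explicit, and in any effective rendering of that proof it is astronomically large, so ``the finitely many small cases checked directly'' is not a step anyone can execute --- the set of small cases is not even specified. Direct verification of EFL is available only for $m\leq 12$ \cite{romero}, and the geometric setting inherits no better bound, since the concurrency patterns of segments realize precisely the hard nearly-disjoint-clique instances; for intermediate $m$ nothing is proved. Your diagnosis of the perturbation route is accurate --- merging simple crossings back into a concurrency forces color identifications that an arbitrary Vizing coloring of the perturbed arrangement will not satisfy, and you supply no selection rule making those identifications globally consistent --- but correctly describing why a route fails does not close it. The proposal therefore establishes the conjecture only for (i) arrangements with no point on three segments and (ii) sufficiently large $m$ modulo an external theorem with a non-effective threshold, leaving the statement open in exactly the range where the paper leaves it open.
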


In this section, we investigate for the first time the line version and the segment version of the EFL Conjecture; these are natural special cases of the EFL Conjecture, but have not previously been studied. We first show that Conjectures \ref{efl_line} and \ref{efl_segment} are equivalent in general. We then prove the conjectures are true for some special families of lines and segments, and investigate a related optimization problem. Note that the conjectures are not necessarily equivalent for families of lines versus families of segments corresponding to classes of planar graphs, since it is not the case that for every planar graph $G$ there exists a set of lines $M$ such that $G_M\simeq G$. Other geometric problems related to the EFL Conjecture have recently been investigated, sometimes framed in the context of hypergraphs; see \cite{hegde,huemer,klein,paul,romero} and the bibliographies therein.

\begin{proposition}
Conjecture \ref{efl_line} is true if and only if Conjecture \ref{efl_segment} is true.
\end{proposition}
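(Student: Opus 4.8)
The plan is to prove the two directions separately, exploiting the fact that a set of lines is a special (degenerate) case of a set of segments, while the reverse direction requires simulating arbitrary segment sets by line sets that share the same intersection structure.

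\textbf{The easy direction (Segment EFL $\Rightarrow$ Line EFL).} First I would observe that every set of $m$ lines in the plane can be regarded as a set of $m$ segments: intersect each line with a sufficiently large bounding box to obtain a segment, chosen large enough that all pairwise intersection points of the lines lie strictly in the interior. This replacement preserves $P(M)$ exactly, and two intersection points lie on a common line if and only if they lie on the corresponding segment. Hence an EFL coloring of the derived segment set with $m$ colors is verbatim an EFL coloring of the original line set with $m$ colors. So if Conjecture~\ref{efl_segment} holds, then Conjecture~\ref{efl_line} holds.

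\textbf{The hard direction (Line EFL $\Rightarrow$ Segment EFL).} Here the main obstacle is that not every segment configuration is realizable as a line configuration (as the paper itself warns), so I cannot simply extend each segment to its supporting line and quote the line conjecture: extending segments can create \emph{new} intersection points and force two points that were previously on distinct segments to become collinear, changing the coloring constraints. The key idea I would pursue is to show that it suffices to prove the Segment EFL Conjecture for segment sets in \emph{general position}, i.e., where no two segments are parallel, no three segments meet in a point, and no segment endpoint is an intersection point. The reduction to general position goes by a perturbation argument in the spirit of the one used in the proof of the segment $K_3$-free bound above: an arbitrary segment set can be perturbed (translating and rotating each segment by a small amount, and slightly extending endpoints that coincide with intersection points) so that the combinatorial type of the intersection pattern is \emph{no simpler} — every original collinearity constraint among intersection points on a common segment is preserved, and any constraints only split apart — so an EFL coloring of the perturbed set restricts to one of the original set.

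\textbf{Completing the hard direction.} Once the configuration is in general position, I would extend each segment to its supporting line to obtain a line set $\mathcal{L}$ with $m$ lines. In general position no two extensions coincide and no new coincidences among existing intersection points are forced, so $P(M)\subseteq P(\mathcal{L})$ and, crucially, two points of $P(M)$ that lie on a common segment of $M$ also lie on the corresponding common line of $\mathcal{L}$. Applying the (assumed) Line EFL Conjecture gives an EFL coloring $f:P(\mathcal{L})\to\{1,\dots,m\}$; restricting $f$ to $P(M)$ yields a function that still separates any two points sharing a segment, because the supporting-line constraint subsumes the segment constraint. This restricted coloring uses at most $m$ colors, establishing the Segment EFL Conjecture. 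The delicate point to verify carefully is that the general-position perturbation never \emph{removes} a collinearity constraint — that is, that two intersection points forced onto one segment in $M$ are never separated onto distinct lines in $\mathcal{L}$; this is guaranteed because each segment of $M$ lies on exactly one supporting line, so all of its intersection points map onto that single line regardless of the perturbation.
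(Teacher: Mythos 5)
Your ``easy'' direction (Segment EFL $\Rightarrow$ Line EFL) is correct and is essentially the paper's argument: the paper clips each line to $\conv(P(M))$ rather than to a large bounding box, but the content is identical. The problem is your ``hard'' direction, where you introduce a perturbation to general position and claim that ``an EFL coloring of the perturbed set restricts to one of the original set.'' This is false, and it is the opposite of what perturbation does. Perturbing so that no three segments are concurrent \emph{splits} a multiple intersection point and thereby \emph{weakens} the constraint system: if $x$ is a common point of segments $s_1,s_2,s_3$, it becomes three points $x_{12},x_{13},x_{23}$, and there is no injection from the original intersection points to the perturbed ones that preserves ``lies on a common segment''---$x$ would need a representative lying on all three perturbed segments, and no split point does. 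Concretely, if each of $s_1,s_2,s_3$ also carries another intersection point elsewhere, a coloring of the perturbed set may give $x_{12},x_{13},x_{23}$ three different colors, none of which is simultaneously legal for $x$ on all of $s_1,s_2,s_3$. So the reduction to general position, as stated, has a genuine gap (colorings transfer from \emph{more} constrained configurations to less constrained ones, not the other way around).

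The irony is that the detour was unnecessary: the ``obstacle'' you identify is not one. Extend every segment of $M$ to its supporting line and merge collinear lines, obtaining $\widehat{M}$ with $m(\widehat{M})\leq m(M)$ and $P(M)\subseteq P(\widehat{M})$. The new intersection points and new collinearities you worry about only \emph{add} constraints on the line side, and the assumed Line EFL Conjecture already produces a coloring of $P(\widehat{M})$ with $m(\widehat{M})\leq m(M)$ colors satisfying all of them; restricting that coloring to $P(M)$ is valid because two points on a common segment of $M$ lie on a common line of $\widehat{M}$, and any extra distinctness enforced by the richer line arrangement is harmless. This direct extension-and-restriction argument, with no general-position hypothesis, is exactly the paper's proof. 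Your final paragraph already contains this restriction argument; you should delete the perturbation step and apply the restriction argument to $M$ itself, noting only that collinear extensions merge (which can only decrease the number of lines, keeping you within the budget of $m(M)$ colors).
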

\proof
Suppose Conjecture \ref{efl_line} is true, and let $M$ be a set of line segments drawn in the plane. Let $\widehat{M}$ be a set of lines obtained by replacing each segment $s\in M$ with a line passing through the endpoints of $s$, and merging collinear lines. Then $m(\widehat{M})\leq m(M)$, and $P(M)\subset P(\widehat{M})$. Since Conjecture \ref{efl_line} is true, $\widehat{M}$ has an EFL coloring $\widehat{f}:P(\widehat{M})\rightarrow \{1,\ldots,m(\widehat{M})\}$. Let $f:P(M)\rightarrow \{1,\ldots,m(M)\}$ be a coloring defined by $f(a)=\widehat{f}(a)$ for each $a\in P(M)$. Note that $\widehat{f}(a)\neq \widehat{f}(b)$ whenever $a$ and $b$ are on the same line in $\widehat{M}$, and that if two intersection points are on the same segment in $M$, they are on the same line in $\widehat{M}$; thus, it follows that $f(a)\neq f(b)$ for any $a,b\in P(M)$ lying on the same segment in $M$. Hence, $f$ is an EFL coloring of $M$ with $m(M)$ colors, so Conjecture \ref{efl_segment} is true.

Now suppose Conjecture \ref{efl_segment} is true, and let $M$ be a set of lines drawn in the plane. Let 
$\widehat{M}=M\cap \conv(P(M))$, i.e., $\widehat{M}$ is the set of segments obtained by taking the portion of each line which falls within $\conv(P(M))$. Then, $m(\widehat{M})\leq m(M)$ and $P(\widehat{M})=P(M)$. Since Conjecture \ref{efl_segment} is true, $\widehat{M}$ has an EFL coloring $\widehat{f}:P(\widehat{M})\rightarrow \{1,\ldots,m(\widehat{M})\}$. Let $f:P(M)\rightarrow \{1,\ldots,m(M)\}$ be a coloring defined by $f(a)=\widehat{f}(a)$ for each $a\in P(M)$. Since $\widehat{f}(a)\neq \widehat{f}(b)$ whenever $a$ and $b$ are on the same segment in $\widehat{M}$, and since if two intersection points are on the same line in $M$, they are on the same segment in $\widehat{M}$, it follows that $f(a)\neq f(b)$ for any $a,b\in P(M)$ lying on the same line in $M$. Thus, $f$ is an EFL coloring of $M$ with $m(M)$ colors, so Conjecture \ref{efl_line} is true.
\qed
\vspace{9pt}

In the clique version of the EFL Conjecture, clearly no fewer than $m$ colors can be used to color the graph, since $m$ colors are needed for each clique. However, in the geometric versions of the EFL Conjecture, it is possible to color the intersection points with far fewer than $m$ colors. Thus, it is natural to define the following optimization problem related to the EFL Conjecture.\\

\noindent \textsc{EFL-Coloring}\\
\textsc{Instance}: Set of curves $M$ such that any two intersect at most once; integer $k$\\
\textsc{Question}: Does $M$ have an EFL coloring with $k$ colors?\\

\noindent  We show below that \textsc{EFL-Coloring} is NP-Complete, even for a very restricted set of curves. We will assume that the equations specifying the curves can be evaluated in polynomial time, or that it is known which curves meet at each intersection point.

\begin{theorem}
\textsc{EFL-Coloring} is NP-Complete, even if $M$ is a set of segments where no five segments intersect in the same point.
\end{theorem}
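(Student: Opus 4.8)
The plan is to first settle membership in NP and then give a polynomial-time reduction from a restricted planar vertex-coloring problem, exploiting the fact that the ``no five concurrent segments'' hypothesis corresponds exactly to a degree-four restriction on the graph being encoded.

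For membership, a candidate EFL coloring $f:P(M)\to\{1,\dots,k\}$ is a certificate of size polynomial in $|P(M)|\le\binom{m}{2}$. To verify it, I would, for each segment $s\in M$, enumerate the intersection points lying on $s$ and check that they all receive distinct colors under $f$; since we are given (or can compute in polynomial time) which segments meet at each point, this runs in polynomial time. Hence \textsc{EFL-Coloring} is in NP.

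For hardness, the key observation is that an EFL coloring is precisely a proper coloring of the \emph{conflict graph} on $P(M)$ in which any two points lying on a common segment must differ; when no three segments are concurrent and each segment carries exactly two intersection points, this constraint is literally ``the two endpoints of that segment get different colors,'' i.e.\ proper vertex coloring. I would therefore reduce from $3$-colorability of planar graphs of maximum degree $4$, which is well known to be NP-complete; we may assume the input graph $G$ has minimum degree at least $2$ (its $2$-core is $3$-colorable iff $G$ is, and is again planar of maximum degree $4$). First I would fix a straight-line planar embedding of $G$, which exists by \cite{wagner}, perturbed into general position so that no edge is collinear with an edge sharing an endpoint, no edge passes through a non-incident vertex, and all coordinates are rationals of polynomially bounded size. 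Let $M$ be the resulting set of edge-segments and set $k=3$. Because the drawing is planar and in general position, the intersection points $P(M)$ are exactly the vertices of $G$, each edge-segment $uv$ contains precisely the two intersection points corresponding to $u$ and $v$, and at most $\deg(v)\le 4$ segments meet at any point, so no five segments are concurrent, as required.

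It then remains to verify correctness: identifying the color of a vertex $v$ with the color assigned to its intersection point, the EFL constraint on each edge-segment $uv$ is exactly that the endpoints of $uv$ receive different colors, so $M$ admits an EFL coloring with $3$ colors if and only if $G$ is properly $3$-colorable. I expect the only delicate points, and the main place care is needed, to be geometric rather than combinatorial: producing a general-position straight-line embedding with polynomially bounded rational coordinates in which the segment/point incidences match the edge/vertex incidences of $G$ \emph{exactly}, so that the convention of merging collinear intersecting segments never collapses two distinct edges and no spurious intersection points are created. The assumption $\delta(G)\ge 2$ is what guarantees every vertex is realized as a genuine intersection point, so that the correspondence between $3$-colorings of $G$ and EFL $3$-colorings of $M$ is a bijection.
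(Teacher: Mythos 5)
Your proof is correct and takes essentially the same route as the paper: both reduce from $3$-coloring of planar graphs with degree bounded by $4$ (the paper uses Dailey's NP-completeness result for $4$-regular planar graphs), realize the graph as a straight-line plane embedding whose edges become the segments, and observe that intersection points are exactly the vertices, so EFL colorings coincide with proper colorings and at most four segments meet at any point. The only difference is cosmetic: the paper's choice of $4$-regular inputs makes every vertex automatically an intersection point, where you instead pass to the $2$-core and perturb away collinear incident edges --- a fix the paper itself uses elsewhere (in its discussion of realizing planar graphs as segment sets) for degree-$2$ vertices.
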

\proof

Given a set of curves $M$, any two of which intersect at most once, and a function $f:P(M)\rightarrow \{1,\ldots,k\}$, it can be verified in polynomial time that $f$ is an EFL coloring of $M$. Thus, \textsc{EFL-Coloring} is in NP. 
Let $G$ be a 4-regular planar graph equipped with a straight-line plane embedding where no two edges are drawn as collinear segments. Let $M$ be the set of segments comprising the edges of $G$. Since no two edges of $G$ are collinear in the embedding of $G$, the segments in $M$ intersect only at their endpoints. Moreover, since $G$ is 4-regular, every endpoint is an intersection point, and no five segments intersect in the same point. 
Thus, $P(M)=V(G)$ and $M=E(G)$, so a function $f:P(M)\rightarrow \{1,\ldots,k\}$ such that $f(a)\neq f(b)$ for each $a,b$ which both belong to some $s\in M$ is both an EFL coloring of $M$ and a proper coloring of $G$. Since finding a proper coloring of $G$ with $k$ colors is equivalent to finding an EFL coloring of $M$ with $k$ colors, it follows that \textsc{EFL-Coloring} contains \textsc{4-regular planar graph coloring} as a subproblem. Since the latter is known to be NP-Complete \cite{dailey}, it follows that \textsc{EFL-Coloring} is also NP-complete, even if $M$ is a set of segments where no five segments intersect in the same point.
\qed
\vspace{9pt}

\begin{corollary}
Let $M$ be a set of segments such that the embedding of $G_M$ induced by $M$ has no collinear edges. Then $M$ has an EFL coloring with 4 colors.  
\end{corollary}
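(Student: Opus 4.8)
The plan is to deduce the statement from the Four Color Theorem by translating an EFL coloring of $M$ into a proper vertex coloring of $G_M$. The first and most important step is to unpack the hypothesis on collinearity. If some segment $s \in M$ contained an intersection point in its interior, then $s$ would be subdivided into two or more edges of $G_M$, all of which lie on the line through $s$ and are hence mutually collinear. So the assumption that the embedding of $G_M$ induced by $M$ has no collinear edges forces every segment of $M$ to be a single edge of $G_M$; equivalently, the only points of $P(M)$ that can lie on any given segment are its two endpoints. This observation is already implicit in the proof of the preceding NP-completeness theorem, where the same collinearity condition made an EFL coloring coincide with a proper coloring of the underlying graph.

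Next I would record the precise correspondence. Under the reduction above, two intersection points lie on a common segment of $M$ if and only if they are the two endpoints of a single edge of $G_M$, that is, if and only if they are adjacent vertices of $G_M$. Consequently a function $f : P(M) \to \{1,2,3,4\}$ is an EFL coloring of $M$ exactly when $f(u) \neq f(v)$ for every edge $uv$ of $G_M$ whose endpoints both lie in $P(M)$. Thus producing an EFL coloring with four colors amounts to properly coloring the vertices of $G_M$ that belong to $P(M)$ with respect to the adjacencies inherited from $G_M$.

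The final step is to invoke the Four Color Theorem. Since $G_M$ is planar, it admits a proper vertex coloring $g : V(G_M) \to \{1,2,3,4\}$; restricting $g$ to the vertices in $P(M)$ yields a function $f$ satisfying $f(u) \neq f(v)$ for every edge $uv$ of $G_M$, and in particular for every such edge with both endpoints in $P(M)$. By the equivalence established in the previous paragraph, this $f$ is an EFL coloring of $M$ with four colors, completing the argument.

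The proof is short and essentially all of its content is carried by the Four Color Theorem. The only genuine obstacle is the opening observation: I must argue carefully that the no-collinear-edges hypothesis is precisely what guarantees that each segment is a single edge of $G_M$, so that the EFL constraints on $P(M)$ coincide exactly with the adjacency constraints of $G_M$. Once that translation is in place, no computation is required and the bound of four colors is immediate.
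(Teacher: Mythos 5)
Your proof is correct and takes essentially the same route as the paper: both invoke the Four Color Theorem on the planar graph $G_M$ and rely on the observation that the no-collinear-edges hypothesis forces each segment of $M$ to be a single edge of $G_M$, so that intersection points sharing a segment are adjacent vertices. Your write-up merely makes explicit (correctly) the step the paper leaves implicit, namely that an interior intersection point on a segment would subdivide it into collinear edges.
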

\proof
By the Four Color Theorem, $G_M$ has a proper coloring $f:V(G_M)\rightarrow \{1,2,3,4\}$. Since the embedding of $G_M$ induced by $M$ has no collinear edges, each segment of $M$ corresponds to exactly one edge of $G_M$, and it follows that $f$ is also an EFL coloring of $M$.\qed

\begin{observation}
With probability 1, a Buffon set $M$ has an EFL coloring with $w(M)$ colors. 
\end{observation}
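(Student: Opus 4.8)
The plan is to observe that a Buffon set is almost surely in general position, to translate an EFL coloring into a proper edge coloring of the segment-intersection graph, and then to bound the number of colors by Vizing's theorem. The probabilistic input is confined to two almost-sure facts: with probability $1$ no three segments of $M$ pass through a common point, and with probability $1$ no endpoint of a segment lies on another segment (so that $P(M)\cap J(M)=\emptyset$). Each such coincidence forces the randomly chosen centres and angles to satisfy a nontrivial analytic relation and hence pins the configuration to a set of Lebesgue measure zero in parameter space; since the centres and angles have an absolutely continuous joint law, every such event has probability $0$. These are precisely the degeneracies whose expected counts are shown to vanish in Section~\ref{section_buffon}. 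On the complementary event, which has probability $1$, every point of $P(M)$ is a transversal crossing of exactly two segments, and distinct crossings determine distinct unordered pairs of segments.

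Working on this event, let $H$ be the graph with one vertex per segment of $M$ and one edge per crossing pair, as in the proof of Proposition~\ref{prop1}. Sending a crossing to the pair of segments meeting there is a bijection of $P(M)$ onto $E(H)$, and two intersection points share a segment if and only if the corresponding edges of $H$ meet at a common vertex. Hence a map on $P(M)$ is an EFL coloring of $M$ if and only if it is a proper edge coloring of $H$, and the least number of EFL colors equals the edge-chromatic number $\chi'(H)$. The conflict graph of the coloring problem, on vertex set $P(M)$, is precisely the line graph $L(H)$, so its clique number $w(M)$ is governed by the stars at the vertices of $H$: it equals $\Delta(H)$ whenever some segment is crossed by at least three others, the only other possibility being a triangle of three mutually crossing segments, which forces $w(M)=3$.

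The coloring is then a direct appeal to Vizing's theorem, which gives $\chi'(H)\le\Delta(H)+1$ and thus colors $P(M)$ with at most $w(M)+1$ colors. I expect the two real points of difficulty to be, first, making the measure-zero estimates of the first paragraph rigorous for the joint distribution of centres and angles — this is the only place where ``with probability $1$'' is actually used — and, second, matching the count exactly to $w(M)$, i.e.\ edge-coloring $H$ with $\Delta(H)$ rather than $\Delta(H)+1$ colors. The reduction and the invocation of Vizing are routine; pinning the count down to $w(M)$, whether by a left-to-right sweep of the segments that produces a $\Delta(H)$-edge-coloring or by folding the extra color into the definition of $w(M)$, is where I would concentrate the effort.
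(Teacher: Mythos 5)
Your reduction of the problem to edge coloring is sound as far as it goes: almost surely every intersection point is a simple crossing of exactly two segments, the map from $P(M)$ to $E(H)$ is a bijection, and an EFL coloring of $M$ is exactly a proper edge coloring of the crossing graph $H$. But the proof stops one color short of the statement: Vizing gives only $\chi'(H)\leq\Delta(H)+1=w(M)+1$, and you explicitly defer the improvement to $w(M)$ to future effort. That deferred step is the entire content of the claim, and in fact it cannot be carried out. Three pairwise-crossing segments in general position --- an event of \emph{positive} probability in the Buffon model --- give $H=K_3$, whose three crossings pairwise share a segment and hence need three distinct colors, while $w(M)=\max_{s\in M}p(s,M)=2$. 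Your attempt to absorb this case by saying a triangle ``forces $w(M)=3$'' silently replaces the paper's definition of $w(M)$ (the maximum number of intersection points on one segment, i.e.\ $\Delta(H)$ under general position) by the clique number of the conflict graph $L(H)$; under the paper's definition the triangle is a genuine counterexample, not a boundary case, and indeed the paper itself later notes configurations (e.g.\ $\mathcal{A}$, and a straight-line $K_4$) with $\chi_E(M)>w(M)$. So no sweep argument can produce a $\Delta(H)$-edge-coloring in general: class-two components of $H$ occur with positive probability.

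The paper's own proof is entirely different and much shorter: after the same almost-sure observation that no three segments meet in a point (the only probabilistic input, matching your first paragraph), it labels the segments $1,\ldots,m$ and colors the crossing of segments $i$ and $j$ with $(i+j)\bmod m$, citing Hindman; along segment $i$ the values $(i+j)\bmod m$ are pairwise distinct, so this is an EFL coloring --- but with $m$ colors, not $w(M)\leq m-1$. In other words, the discrepancy you ran into is present in the statement itself: the paper's argument establishes the bound $m$, your Vizing argument establishes the bound $w(M)+1$ (which is actually sharper whenever $w(M)\leq m-2$), and the triangle example shows that $w(M)$ itself is unattainable with positive probability. Your approach is therefore a legitimately different and in one sense stronger route, but as a proof of the literal statement it has a gap that cannot be closed.
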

\proof
In a Buffon set, with probability 1, the number of intersection points where more than $2$ segments meet is zero. Then, if the segments in $M$ are labeled $1,\ldots,m$ and an intersection point between segments $i$ and $j$ is assigned color $(i+j) \mod m$, the resulting assignment is an EFL coloring (see \cite{hindman}). 
\qed
\vspace{9pt}

Below we show that every segment cactus graph has an EFL coloring with $m$ colors. In fact, we show something slightly stronger. Let $\mathcal{A}$ be any set of 3 segments with 3 intersection points. Clearly $\mathcal{A}$ has an EFL coloring with 3 colors. We show that $\mathcal{A}$ is the only segment cactus which requires $m$ colors for an EFL coloring; for all other segment cactus graphs, $m-1$ colors are sufficient, and sometimes necessary.

\begin{theorem}
Let $M$ be a segment cactus different from $\mathcal{A}$. Then $M$ has an EFL coloring with $m-1$ colors; moreover, there exist segment cactus graphs different from $\mathcal{A}$ which do not have an EFL coloring with $m-2$ colors.
\end{theorem}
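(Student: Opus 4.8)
The plan is to recast an EFL coloring as an ordinary proper coloring. Define the \emph{conflict graph} $L(M)$ on vertex set $P(M)$, with two intersection points adjacent exactly when they lie on a common segment; an EFL coloring with $k$ colors is then precisely a proper $k$-coloring of $L(M)$, so it suffices to bound $\chi(L(M))$. The crucial structural fact is that two distinct intersection points lie on at most one common segment (two non-collinear segments meet at most once), so $L(M)$ is the union of $m$ cliques, one per segment, any two of which share at most one vertex; in particular the $d(s)$ points on a segment $s$ form a clique, so $\chi(L(M))\ge\max_s d(s)$, where $d(s)$ denotes the number of intersection points on $s$.

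I would first dispose of the forest case. If $M$ is a segment forest, then any triangle, or any longer chordless cycle, in $L(M)$ whose edges lie on pairwise distinct segments would trace a closed polygon in $G_M$ and hence a circuit, contradicting $c(M)=0$; thus every clique of $L(M)$ lies on a single segment, and $L(M)$ is a block graph whose blocks are the segment-cliques. Block graphs are perfect, so $\chi(L(M))=\omega(L(M))=\max_s d(s)\le m-1$, the last inequality because a segment crosses at most $m-1$ others. Forests are never equal to $\mathcal{A}$, so this case is complete, and disconnected cacti reduce to their components routinely.

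For $c(M)\ge 1$ I would induct on $m$, removing one segment $s_*$ along the lines of Proposition~\ref{main_prop} and Corollary~\ref{cor1} and extending a coloring of $M\bs s_*$. If $M$ has a nontrivial pendant tree, take $s_*$ to be a leaf segment carrying a single intersection point $p$; by induction $M\bs s_*$ is colored with at most $m-2$ colors (or, if $M\bs s_*=\mathcal{A}$, with $m-1$), and since the only segment through $p$ other than $s_*$ forbids at most $m-2$ of the $m-1$ colors, a free color remains for $p$. If instead $M$ consists only of circuits glued at cut vertices, Proposition~\ref{main_prop} supplies an outer-circuit segment $s_*$ whose removal opens that circuit; now $s_*$ contributes two corner points $p,q$ (lying on $s_*$ together with its two circuit-neighbors $a,b$), which must receive \emph{distinct} colors. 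A direct count shows each of $p,q$ individually has at least one admissible color in the palette of size $m-1$; the difficulty is to guarantee the two choices can be made distinct.

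The heart of the argument is therefore this extension step in the circuit case. The only way it can fail is if both $a$ and $b$ already meet all $m-1$ other segments, so that each forbids all but one color; I expect this to be the main obstacle, and I would rule it out using the cactus condition, since two segments crossing every other segment force two circuits sharing more than a single vertex, which is impossible in a cactus once $m\ge 4$. The remaining small configurations, in particular those for which the reduction would return $\mathcal{A}$ (such as $\mathcal{A}$ together with one extra segment), are handled as finitely many explicitly checked base cases. Finally, tightness of the \emph{moreover} claim is witnessed by the segment cactus consisting of one segment crossed transversally by $m-1$ others at distinct points: this is not $\mathcal{A}$, its long segment carries $m-1$ mutually conflicting intersection points, and hence it admits no EFL coloring with $m-2$ colors, while $m-1$ colors suffice.
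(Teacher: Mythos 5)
Your proposal is correct, and its engine is the same as the paper's: both rest on Proposition~\ref{main_prop} to locate a segment $s_*$ in a unique circuit segment set, both reduce the crucial extension step to the observation that the coloring can only get stuck if \emph{both} neighboring segments ($a,b$ in your notation, $x,y$ in the paper's) meet every other segment at distinct points, both identify the exceptional configurations as exactly the $\mathcal{A}$-type ones, and both certify tightness with the same witness (one segment crossed transversally by $m-1$ others, the paper's Figure~\ref{22a25}). The differences are organizational rather than substantive. You induct on $m$ and peel one segment at a time --- a pendant leaf segment with a single intersection point when one exists (via Corollary~\ref{cor1}), a circuit segment otherwise --- whereas the paper inducts on $c$, strips \emph{all} pendant trees at once via its Claim~\ref{claim1} (coloring tree points with fresh colors), and only then removes the circuit segment; your bookkeeping is leaner, but it obliges you to justify the assertion that in the tree-free case $s_*$ carries exactly two intersection points, which you state without proof --- it is true, but only because properties $A)$ and $B)$ of Proposition~\ref{main_prop} forbid any further circuit or tree attached at an interior point of $s_*$, so the citation is doing real work there. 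Your forest base case, proving the conflict graph $L(M)$ is a perfect block graph, is valid but much heavier than needed: the paper simply uses $p\le m-1$ for a segment tree and colors every point distinctly. Two small repairs: the double-saturation configuration is actually realizable at $m=4$ (two triangles sharing only the vertex $a\cap b$, i.e., $\mathcal{A}$ plus one segment), so your exclusion should read $m\ge 5$ --- harmless, since you check precisely that configuration as a base case; and for $m\ge 5$ the clean contradiction is a pigeonhole on the two directions along $a$ emanating from $a\cap b$: three triangle cycles through that vertex force two of them to share the first edge of $a$, placing an edge of $G_M$ in two cycles, which the paper obtains by its slightly longer endpoint-location analysis showing the only escape is $M'\backslash s\simeq\mathcal{A}$.
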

\proof
If $M$ is disconnected, colors can be used independently in each of its connected components. Thus, assume without loss of generality that $M$ is connected. We will prove the claim by induction on $c(M)$. If $c(M)=0$, then $M$ is a segment tree, and $p\leq m-1$. Thus, each intersection point can be colored with a distinct color to obtain an EFL coloring with $m-1$ colors. 

\begin{claim}
\label{claim1}
Let $M$ be a set of segments which can be partitioned into a segment set $M'$ and a segment tree $T$ such that $M'$ and $T$ intersect in exactly one point. If $M'$ has an EFL coloring with $k$ colors, then $M$ has an EFL coloring with $k+m(T)$ colors.
\end{claim}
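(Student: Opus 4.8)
The plan is to prove Claim \ref{claim1} directly by combining the given EFL coloring of $M'$ with a fresh palette for the tree $T$. Let $v$ be the single point where $M'$ and $T$ intersect. The essential structural observation is that any segment of $M$ lies entirely in $M'$ or entirely in $T$, since $M'$ and $T$ meet in only one point; the only interaction between the two pieces happens at $v$. Consequently, two intersection points can be forced to receive different colors only if they lie on a common segment, and such a common segment is either an $M'$-segment (so both points are in $P(M')$) or a $T$-segment (so both points are in $P(T)$). The lone exception is the point $v$ itself, which may be shared by a segment of $M'$ and a segment of $T$.

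Concretely, I would take the given EFL coloring $f':P(M')\to\{1,\ldots,k\}$ and a fresh set of $m(T)$ colors $\{k+1,\ldots,k+m(T)\}$ disjoint from the first $k$. Since $T$ is a segment tree, Observation \ref{obs2} (part 1) gives $p(T)\le m(T)-1$, so $T$ has at most $m(T)-1$ intersection points and they can each be assigned a distinct new color, yielding a valid EFL coloring of $T$ using only the fresh palette. I would then define $f$ on $P(M)=P(M')\cup P(T)$ by setting $f=f'$ on $P(M')$ and using the fresh coloring on $P(T)$. The only ambiguity is at $v$, which may lie in both $P(M')$ and $P(T)$; there I simply keep the color $f'(v)$ coming from $M'$. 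Because at least one new color remains unused (we have $m(T)$ fresh colors but at most $m(T)-1$ intersection points in $T$ to color), recoloring $v$ with $f'(v)$ does not create a conflict along any $T$-segment through $v$: I can reserve one fresh color for $v$'s slot so the remaining points of $T$ still get distinct colors.

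To verify that $f$ is a valid EFL coloring, I would check that no segment of $M$ contains two equally-colored points. For a segment contained in $M'$, both of its intersection points lie in $P(M')$ and receive distinct $f'$-values, since $f'$ is an EFL coloring. For a segment contained in $T$, both of its points lie in $P(T)$; if neither is $v$, they get distinct fresh colors, and if one of them is $v$, then $v$ has color $f'(v)\in\{1,\ldots,k\}$ while the other point has a fresh color in $\{k+1,\ldots,k+m(T)\}$, so the two differ because the palettes are disjoint. This exhausts all segments of $M$, establishing that $f$ is an EFL coloring using the $k+m(T)$ colors.

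I do not expect a serious obstacle here; the argument is essentially bookkeeping once the key observation is in place. The one point requiring care is the treatment of the shared vertex $v$: I must ensure that forcing $v$ to keep its $M'$-color does not collide with the fresh colors assigned elsewhere along $T$-segments through $v$. This is handled cleanly by the disjointness of the two palettes, so no conflict can arise across the two pieces. The counting inequality $p(T)\le m(T)-1$ from Observation \ref{obs2} is exactly what guarantees the fresh palette is large enough, with a color to spare for $v$.
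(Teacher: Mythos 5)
Your overall strategy --- keep the given coloring on $P(M')$, spend a disjoint fresh palette of $m(T)$ colors on the points carried by $T$, and treat the single shared point specially --- is exactly the paper's, and the two ingredients (palette disjointness and the segment-tree bound $p(T)\leq m(T)-1$ from Observation \ref{obs2}) do the same work in both arguments. However, there is one uncovered case that leaves your coloring undefined as written: you assume the shared point $v$ has a color $f'(v)$, i.e., that $v\in P(M')$. This fails when only one segment of $M'$ passes through $v$: then $v$ is an intersection point of $M$ (a segment of $M'$ crosses a segment of $T$ there), but $v\notin P(M')$, so $f'(v)$ does not exist; and if moreover only one segment of $T$ passes through $v$, then $v\notin P(T)$ either, so your identity $P(M)=P(M')\cup P(T)$ is also wrong --- in general $P(M)=P(M')\cup P(T)\cup\{v\}$ --- and your $f$ assigns $v$ no color at all.

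The patch is immediate and stays within your framework: give $v$ a fresh color unconditionally. The points of $M$ lying on segments of $T$ are exactly $P(T)\cup\{v\}$, of which there are at most $p(T)+1\leq m(T)$, so the fresh palette suffices; on any $M'$-segment through $v$ all other intersection points carry old colors, so no conflict arises there, and on $T$-segments all points carry distinct fresh colors. This is precisely how the paper's proof proceeds: it lists $a_1,\ldots,a_t$ (with $t\leq m(T)$ and $a_1=v$) as \emph{all} intersection points of $M$ on segments of $T$, explicitly notes that $a_1$ may or may not belong to $P(M')$, writes $P(M)=(P(M')\backslash\{a_1\})\,\dot\cup\,\{a_1,\ldots,a_t\}$, and colors every $a_i$ --- including $v$ --- with the new color $k+i$. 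Your variant of retaining $f'(v)$ when that color exists is equally valid (recoloring is unnecessary in that case, as you observe); the only defect is the missing case, which the paper's unconditional fresh color for $a_1$ avoids.
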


\proof
Let $f:P(M')\rightarrow \{1,\ldots,k\}$ be an EFL coloring of $M'$. Let $a_1,\ldots,a_t$ be all intersection points in $M$ which belong to segments of $T$, where without loss of generality, $a_1=M'\cap T$ is the point where $M'$ and $T$ intersect. Note that $t\leq m(T)$, and that $a_1$ may or may not be in $P(M')$; in either case, $P(M)=(P(M')\backslash \{a_1\})\dot\cup \{a_1,\ldots,a_t\}$. Then, $f':P(M)\rightarrow \{1,\ldots,k+m(T)\}$ given by 
\begin{equation*}
f'(a)=\begin{cases}
f(a) \qquad&\text{ if }  a\in P(M')\backslash \{a_1\}\\
k+i\qquad&\text{ if }  a=a_i, 1\leq i\leq t
\end{cases}
\end{equation*}
is an EFL coloring of $M$ with $k+m(T)$ colors.
\qed
\vspace{9pt}

If $c(M)=1$ and $M\neq \mathcal{A}$, then $M$ has at least 4 segments. Let $S$ be the circuit segment set of the circuit of $M$. If $S$ contains exactly 3 segments, let $s\notin S$ be a segment of $M$ which intersects $S$. It is easy to check that, regardless of where $s$ intersects $S$, the segment set $M':=S\cup \{s\}$ has an EFL coloring with 3 colors. If $S$ contains more than 3 segments, let $M':=S$; then $P(M')$ has an EFL coloring with at most 3 colors (by alternating two colors and using a third color if $m(M')$ is odd). In both cases, since $M$ has a single circuit, the connected components of $M\backslash M'$ are segment trees $T_1,\ldots,T_k$, each of which intersects $M'$ in exactly one point. By Claim \ref{claim1}, since $M'$ has an $EFL$ coloring with $m(M')-1$ colors, $M$ has an EFL coloring with $m(M)-1$ colors. 


Now suppose all segment cactus graphs with $c(M)=t\geq 1$ which are different from $\mathcal{A}$  have an EFL coloring with $m-1$ colors. Let $M$ be a connected segment cactus with $c(M)=t+1$. By Proposition \ref{main_prop}, $M$ has a segment $s$ which belongs to a single circuit segment set $S$ and the connected components of $M\backslash s$ which do not contain segments of $S$ are segment trees $T_1,\ldots,T_k$. Note that for each $i\in\{1,\ldots,k\}$, exactly one segment of $T_i$ intersects $s$, since otherwise $s$ would be part of more than one circuit. Let $M'=M\backslash (T_1\cup\ldots\cup T_k)$. By construction, $s$ intersects exactly two segments $x$ and $y$ of $M'$, respectively in the points $a_x$ and $a_y$. Moreover, $M'\backslash s$ is a segment cactus with $c(M')=t$. 

If $M'\backslash s$ is of type $\mathcal{A}$, then $M'$ has 4 segments, one of which is $s$, and $s$ intersects two of the segments of $M'\backslash s$ outside the circuit of $M'\backslash s$ (since this is the only way to add a segment to $\mathcal{A}$ to produce two circuits and remain a segment cactus). Then, it is easy to see that $M'$ has an EFL coloring with $m(M')-1=3$ colors. 


Now, suppose that $M'\backslash s$ is different from $\mathcal{A}$; then, by the inductive hypothesis, $M'\backslash s$ has an EFL coloring $f:P(M'\backslash s)\rightarrow \{1,\ldots,m(M'\backslash s)-1\}$. Clearly, in $M'\backslash s$, $x$ and $y$ each have at most $m(M'\backslash s)-1$ intersection points. Suppose for contradiction that both $x$ and $y$ have exactly $m(M'\backslash s)-1$ intersection points. Then, every segment in $M'\backslash \{s,x\}$ intersects $x$, and every segment in $M'\backslash \{s,y\}$ intersects $y$. In particular, $x$ and $y$ intersect each other at some point $a$. Let $b_x$ and $c_x$ be the endpoints of $x$, where $b_x$ is closer to $a$ than to $a_x$; let $b_y$ and $c_y$ be the endpoints of $y$, where $b_y$ is closer to $a$ than to $a_y$.
Then, no segment can intersect $x$ and $y$ so that it has one endpoint in $\overline{c_xa}$ and the other in $\overline{c_ya}$, since that would create two circuits in $M'$ sharing more than a single point. Similarly, no segment can intersect $x$ and $y$ so that it has one endpoint in $\overline{b_ya}$ and the other in $\overline{c_xa}$, or one endpoint in $\overline{b_xa}$ and the other in $\overline{c_ya}$. If exactly one segment intersects $x$ and $y$ with one endpoint in $\overline{b_ya}$ and the other in $\overline{b_xa}$, then $M'\backslash s$ is of type $\mathcal{A}$, a contradiction. Otherwise, if more than one segment intersects $x$ and $y$ with one endpoint in $\overline{b_ya}$ and the other in $\overline{b_xa}$, this would create two circuits in $M'$ sharing more than a single point, a contradiction. Finally, any segments of $M'\backslash s$ intersecting $x$ and $y$ at $a$ do not add to the number of intersection points of $x$ and $y$; thus, in all cases it follows that both $x$ and $y$ cannot have exactly $m(M'\backslash s)-1$ intersection points. 

Without loss of generality, suppose $x$ has at most $m(M'\backslash s)-2$ intersection points. If $a_x$ is an intersection point in $M'\backslash s$, let $q=f(a_x)$. Otherwise, if $a_x$ is not an intersection point in $M'\backslash s$, let $q$ be a color of the EFL coloring $f$ that does not appear on the segment $x$, i.e., $q\in \{1,\ldots,m(M'\backslash s)-1\}$ such that for all $a\in P\cap x$, $f(a)\neq q$. Then, $f':P(M')\rightarrow \{1,\ldots,m(M')-1\}$ defined by 
\begin{equation*}
f'(a)=\begin{cases}
f(a) \qquad&\text{ if }  a\in P(M'\backslash s)\backslash\{a_y\}\\
q\qquad&\text{ if }  a=a_x\\
m(M')-1\qquad&\text{ if }  a=a_y
\end{cases}
\end{equation*}
is an EFL coloring of $M'$. By Claim \ref{claim1}, since $M'$ has an $EFL$ coloring with $m(M')-1$ colors, $M$ has an EFL coloring with $m(M)-1$ colors. 

For the family of segment cactus graphs pictured in Figure \ref{22a25}, the horizontal segment intersects $m-1$ segments, so this family of segment cactus graphs does not have an EFL coloring with $m-2$ colors. 
\qed
\vspace{9pt}

\noindent Given a set of segments (or curves, or lines) $M$ and $s\in M$, let $P(s,M)$ denote the set of intersection points contained in $s$, let $p(s,M)=|P(s,M)|$, and let $w(M)=\max_{s\in M}\{p(s,M)\}$. When there is no scope for confusion, dependence on $M$ will be omitted. Note that for any segment set $M$, $w\leq m-1$.

\begin{observation}
For any set $M$ of segments (or curves, or lines), at least $w(M)$ colors are necessary for an EFL coloring of $M$. 
\end{observation}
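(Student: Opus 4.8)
The plan is to argue directly from the definition of an EFL coloring, since the statement is essentially an unpacking of the definition of $w(M)$. First I would fix a segment $s \in M$ that achieves the maximum, i.e., a segment with $p(s,M) = w(M)$; such a segment exists because $M$ is finite. By definition, $s$ contains exactly $w(M)$ distinct intersection points, say $q_1, \dots, q_{w(M)} \in P(s,M)$.

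The key observation is that all of these points lie on the single segment $s$. Recall that an EFL coloring $f : P(M) \to \{1,\ldots,k\}$ is required to satisfy $f(p_1) \neq f(p_2)$ whenever $p_1$ and $p_2$ belong to the same segment. Applying this to every pair $q_i, q_j$ with $i \neq j$, I would conclude that $f(q_1), \dots, f(q_{w(M)})$ are pairwise distinct. Hence the restriction of $f$ to $\{q_1, \dots, q_{w(M)}\}$ uses at least $w(M)$ distinct colors, so any EFL coloring of $M$ must use at least $w(M)$ colors.

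There is no real obstacle here: the statement follows immediately from the constraint built into the definition of an EFL coloring, with the only ingredient being that a segment achieving the maximum has $w(M)$ mutually forbidden intersection points on it. The same argument applies verbatim when $M$ is a set of curves or lines, since the coloring constraint is stated identically for all three cases. The one point worth stating carefully is simply that the maximum is attained (finiteness of $M$), after which the distinctness of the $w(M)$ colors is forced.
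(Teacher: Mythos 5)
Your proof is correct and is exactly the argument the paper has in mind: the paper states this as an observation without proof precisely because it follows immediately from the definitions, namely that the $w(M)$ intersection points on a segment attaining $\max_{s\in M} p(s,M)$ must receive pairwise distinct colors under any EFL coloring. Your additional care about finiteness of $M$ ensuring the maximum is attained is a fine touch but not a substantive difference.
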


\noindent We will now show two examples for which $w(M)$ colors are also sufficient for an EFL coloring. 

\begin{proposition}
\label{prop_efl_tree}
Let $M$ be a segment tree. Then $M$ has an EFL coloring with $w(M)$ colors.
\end{proposition}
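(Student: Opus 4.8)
The plan is to prove this by induction on the number of segments $m(M)$, using Corollary \ref{cor1} to peel off a well-chosen segment at each step. Recall that $w(M)$ is the maximum number of intersection points lying on any single segment, and that at least $w(M)$ colors are always necessary. The base cases are trivial: a single segment or two intersecting segments needs only $w=1$ color. For the inductive step, I would assume every segment tree on fewer than $m$ segments admits an EFL coloring with $w$ colors, and consider a segment tree $M$ with $m$ segments.

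By Corollary \ref{cor1}, $M$ contains a segment $s_a$ with exactly one intersection point, say $a$, such that $M\backslash s_a$ is still connected (hence still a segment tree). The natural strategy is to EFL-color $M' := M\backslash s_a$ by the inductive hypothesis with $w(M')$ colors, and then extend the coloring to the single new point contributed by $s_a$. Since $s_a$ carries only one intersection point, the segment $s_a$ itself imposes no constraint of its own; the only constraints on the color of $a$ come from the other segment (or segments) of $M'$ that pass through $a$. The first key step is thus to check that $w(M')\le w(M)$, so that an EFL coloring of $M'$ with $w(M')$ colors uses at most $w(M)$ colors.

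The heart of the argument is extending the coloring to the point $a$ using only the available $w(M)$ colors. Here $a$ lies on some segment $t\in M'$ (the segment that $s_a$ meets). In $M$, the point $a$ is a new intersection point on $t$, so $t$ now carries one more intersection point than it did in $M'$; I must verify that $t$ still has at most $w(M)$ intersection points and, crucially, that a color is free for $a$. If $a$ was already an intersection point in $M'$ (i.e.\ $a\in P(M')$), then $a$ already has a color and nothing changes. The delicate case is when $a$ is genuinely new: then $t$ has at most $w(M)$ intersection points in $M$, so at most $w(M)-1$ of them lie on $t$ besides $a$, leaving at least one of the $w(M)$ colors unused on $t$; assign that color to $a$. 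I expect the main obstacle to be handling the case analysis cleanly --- in particular ensuring the counting gives a free color exactly when needed and confirming that adding $s_a$ back does not raise the maximum degree $w$ above the budget, which requires care about whether $a$ was already counted as an intersection point in $M'$.
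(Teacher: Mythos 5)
Your proof is correct, and it is essentially the paper's argument run in reverse. The paper colors greedily from an arbitrary root segment: it assigns the points of a first segment $s$ the colors $\{1,\ldots,p(s,M)\}$, then repeatedly picks a segment $s'$ meeting the already-colored part at a single point $a$ and gives its remaining $p(s',M)-1$ points the colors $\{1,\ldots,p(s',M)\}\backslash \{f(a)\}$; since $p(s',M)\leq w(M)$ for every segment, only $w(M)$ colors ever appear. You instead induct downward, peeling off a pendant segment $s_a$ via Corollary~\ref{cor1} and extending a coloring of $M\backslash s_a$ at the one new point. Both proofs rest on the same structural fact --- in a segment tree a segment attaches to the rest through exactly one point, since a second attachment would close a circuit --- but they distribute the burden differently: the paper's greedy construction leaves this fact implicit (each newly processed segment must have exactly one pre-colored point, which follows from connectivity and $c=0$ but is not spelled out), whereas your appeal to Corollary~\ref{cor1} makes it explicit, at the cost of the case analysis over whether $a$ remains an intersection point of $M\backslash s_a$. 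Your counting is sound in both cases: $w(M\backslash s_a)\leq w(M)$ because deleting a segment only removes intersection points, and when $a$ is new, the unique host segment $t$ carries at most $w(M)-1$ colored points, so a color is free. One cosmetic slip: a single segment has $w(M)=0$, not $1$, though that base case is vacuous either way.
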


\proof
Let $s$ be an arbitrary segment of $M$; arbitrarily assign the colors $\{1,\ldots,p(s,M)\}$ to its intersection points. Let $s'$ be a segment which intersects $s$ at point $a$ and let $f(a)$ be the color of $a$; arbitrarily assign the colors $\{1,\ldots,p(s',M)\}\backslash f(a)$ to the intersection points of $s'$ other than $a$. Repeat this process by successively picking a segment which intersects a segment whose intersection points are already colored, until all intersection points in $M$ are colored. The resulting coloring is an EFL coloring, since using colors $\{1,\ldots,p(s',M)\}\backslash f(a)$ for each new segment $s'$ is enough to assure that each of the $p(s',M)-1$ uncolored intersection points of $s'$ receive a distinct color. Moreover, by construction, only colors $\{1,\ldots,w(M)\}$ are used over all segments. 
\qed

\begin{theorem}
\label{thm_efl_k3free}
Let $M$ be a line $K_3$-free graph. Then $M$ has an EFL coloring with $w(M)$ colors. 
\end{theorem}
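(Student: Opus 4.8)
The plan is to reduce the problem to a classical edge-coloring theorem after first pinning down the rather rigid geometry of a line $K_3$-free graph. The useful reformulation is that an EFL coloring of $M$ with $k$ colors is exactly a proper $k$-coloring of the \emph{co-linearity graph} $H$ on vertex set $P(M)$, in which two intersection points are adjacent whenever they lie on a common line of $M$. The intersection points of any single line form a clique of $H$, so $\omega(H)\geq w(M)$, and the goal $\chi(H)\leq w(M)$ is equivalent to $\chi(H)=\omega(H)=w(M)$. I will establish this by showing that, apart from degenerate cases, $M$ is a two-direction arrangement whose $H$ is the line graph of a complete bipartite graph, where the equality is exactly König's theorem.

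The heart of the argument is the structural lemma: \emph{if three lines of $M$ pairwise cross in three distinct points, then $G_M$ contains a $K_3$}. To prove it I would look at the bounded triangular region $R$ cut out by the three lines and take, among all triples of lines of $M$ whose pairwise intersections bound a triangle contained in $R$, one of minimum area. I then argue this minimal triangle is \emph{empty}, i.e.\ has no intersection point of $M$ in its interior or on its open sides: any such point would lie on a further line which, together with two of the three sides, bounds a strictly smaller sub-triangle inside $R$, contradicting minimality. The three corners of an empty triangle are pairwise consecutive along their lines and hence form a $K_3$ in $G_M$. This geometric fact is the main obstacle; the rest is bookkeeping. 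Contrapositively, since $G_M$ is $K_3$-free, no three lines of $M$ pairwise cross at distinct points, so any three lines of $M$ either contain a parallel pair or are concurrent.

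From this I would deduce that $M$ has at most two distinct slopes, or else all lines of $M$ pass through a common point. Indeed, if there were three distinct slopes realized by lines $a,b,c$, these pairwise cross and so must be concurrent at some point $q$ by the previous paragraph; then any further line is forced through $q$ (apply the same concurrency to $a,b$ together with a new-slope line, or to $b,c$ together with a line parallel to $a$, the latter forcing that line to coincide with $a$), so $M$ is a pencil. In the pencil case, and in the case of at most one slope, $P(M)$ is a single point or empty and $w(M)\leq 1$, so the coloring is immediate. In the remaining two-slope case the two parallel classes have sizes $a$ and $b$, no three lines are concurrent, and every intersection point is the crossing of one line from each class; hence $H$ is precisely the line graph of $K_{a,b}$ and $w(M)=\max(a,b)$. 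An EFL coloring of $M$ with $w(M)$ colors is then exactly a proper edge-coloring of $K_{a,b}$ with $\Delta(K_{a,b})=\max(a,b)$ colors, which exists by König's edge-coloring theorem for bipartite graphs. This produces the required coloring and completes the proof.
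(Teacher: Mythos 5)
Your proposal is correct, and its overall skeleton matches the paper's: both arguments rest on the structural fact that a line $K_3$-free arrangement either has at most two parallel classes or is a pencil, followed by a bipartite-type coloring of the two-class case. The differences are local but genuine. For the key geometric lemma (three lines pairwise crossing at three distinct points force a $K_3$ in $G_M$), the paper orders the lines $\ell_1,\ldots,\ell_m$ so that the triangle-forming triple comes first and then inducts on adding one line at a time, showing that a witness triangle either survives the new line or is sliced by it into a smaller witness triangle; your extremal argument---a minimum-area triangle among all triples of lines whose triangle lies inside $R$ must be empty, since any interior or open-side intersection point yields a line cutting off a strictly smaller triangle with two of the sides---reaches the empty triangle in one step. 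It uses the same slicing observation but dispenses with the insertion order and the case analysis about how a new line meets an existing witness, which I would call the cleaner of the two proofs. For the coloring step, the paper writes down the explicit assignment $f(a_{ij})=(i+j)\bmod w$, which is exactly the canonical proper edge-coloring of $K_{a,b}$ that you obtain abstractly by identifying the co-linearity graph $H$ with the line graph $L(K_{a,b})$ and invoking K\"onig's edge-coloring theorem; the two are the same coloring, but the paper's version is self-contained while yours makes the conceptual content ($\chi(H)=\omega(H)=w(M)$ via a classical theorem) explicit and also absorbs uniformly the subcase $|S_2|=1$, which the paper splits off separately. Your derivation of the trichotomy (at most one slope, exactly two slopes, or a pencil) from the lemma is also sound, and in fact spells out a step the paper passes over quickly, namely why the absence of a triangle plus at least three slopes forces all lines through a common point.
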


\proof
If all lines of $M$ are parallel, then there are no intersection points and we are done. Suppose that the lines of $M$ can be separated into two sets $S_1$ and $S_2$ of mutually parallel lines. 
If $|S_1|=1$ or $|S_2|=1$, then $p(M)=w$, so $w$ colors clearly suffice for an EFL coloring; thus, suppose without loss of generality that $w=|S_1|\geq |S_2|\geq 2$.
Let $\ell\in S_1$ and $\ell'\in S_2$ be lines which form two sides of the convex hull of $P(M)$; order the other lines in $S_1$ and $S_2$ according to their distance from $\ell$ and $\ell'$, respectively. Let $f:P(M)\rightarrow \{1,\ldots,w\}$ be defined by $f(a_{ij})=(i+j) \mod w$, where $a_{ij}$ is  the intersection point which is in the $i^\text{th}$ line in $S_1$ and the $j^\text{th}$ line in $S_2$, according to the ordering specified above. Then, $f$ is an EFL coloring of $M$ with $w$ colors, since if $a_{ij_1}$ and $a_{ij_2}$ are two points on the same line, $f(a_{ij_1})=(i+j_1) \mod w\neq (i+j_2) \mod w=f(a_{ij_2})$.

Now, suppose the lines of $M$ cannot be separated into two sets of mutually parallel lines. If all lines in $M$ intersect at the same point, then clearly $p(M)=1=w(M)$, and we are done. Otherwise, there must be three lines in $M$ which intersect each other in different points, forming a triangle. Let $\ell_1,\ldots,\ell_m$ be the lines in $M$. Without loss of generality, suppose $\ell_1,\ell_2,\ell_3$ are lines which intersect in three different points. For $1\leq i\leq m$, let $M_i=\ell_1\cup\ldots\cup \ell_i$. Suppose $M_i$ is not $K_3$-free for some $i\in\{3,\ldots,m-1\}$, and let $T_i$ be the triangle in $M_i$ formed by the lines corresponding to the edges of some $K_3$ subgraph of $G_{M_i}$. If $\ell_{i+1}$ does not pass through the interior of $T_i$, then $T_i$ sill induces a $K_3$ subgraph in $G_{M_{i+1}}$. If $\ell_{i+1}$ passes through the interior of $T_i$, then it creates at least one new triangle along with two of the lines forming $T_i$, which induces a new $K_3$ subgraph in $G_{M_{i+1}}$. Thus, in either case, if $M_i$ is not a line $K_3$-free graph, then $M_{i+1}$ is not a line $K_3$-free graph. Since $M_3$ is not $K_3$-free, it follows by induction that $M$ is not $K_3$-free. 
\qed

\section{Concluding remarks}
\label{section_conclusion}

In this paper, we derived tight bounds on the number of intersection points and circuits of different families of segment sets. Such bounds on $p$ and $c$ in terms of $m$ can yield better bounds on the time and space complexities of existing algorithms. In particular, in Section \ref{section_bounds}, we considered segment Halin graphs, segment cactus graphs, segment $K_3$-free graphs, and segment maximal planar graphs. These classes of segments are mostly non-overlapping and thus constitute a significant part of all sets of segments; for instance, segment Halin graphs and segment $K_3$-free graphs are disjoint, as are segment maximal planar graphs and segment cactus graphs (for $m\neq 3$). Some other interesting families to consider are segment bipartite planar graphs and segment maximal outerplanar graphs. By a similar reasoning as in Proposition \ref{prop_max_planar}, it can be shown that for a segment maximal outerplanar graph, $p\geq\left\lceil(m+3)/2\right\rceil$ and 
$c\geq\left\lceil(m-1)/2\right\rceil$. However, we have not been able to find exact or asymptotic tight upper bounds on $p$ and $c$. A construction of segment maximal outerplanar graphs with $p=2m-6$ and $c=2m-8$ is shown in Figure \ref{fig_outerplanar}, but in general, this construction is not the best possible. However, we conjecture that the upper bounds for both $p$ and $c$ are linear in $m$.

\begin{figure}[h!]
\begin{center}
\includegraphics[scale=0.35]{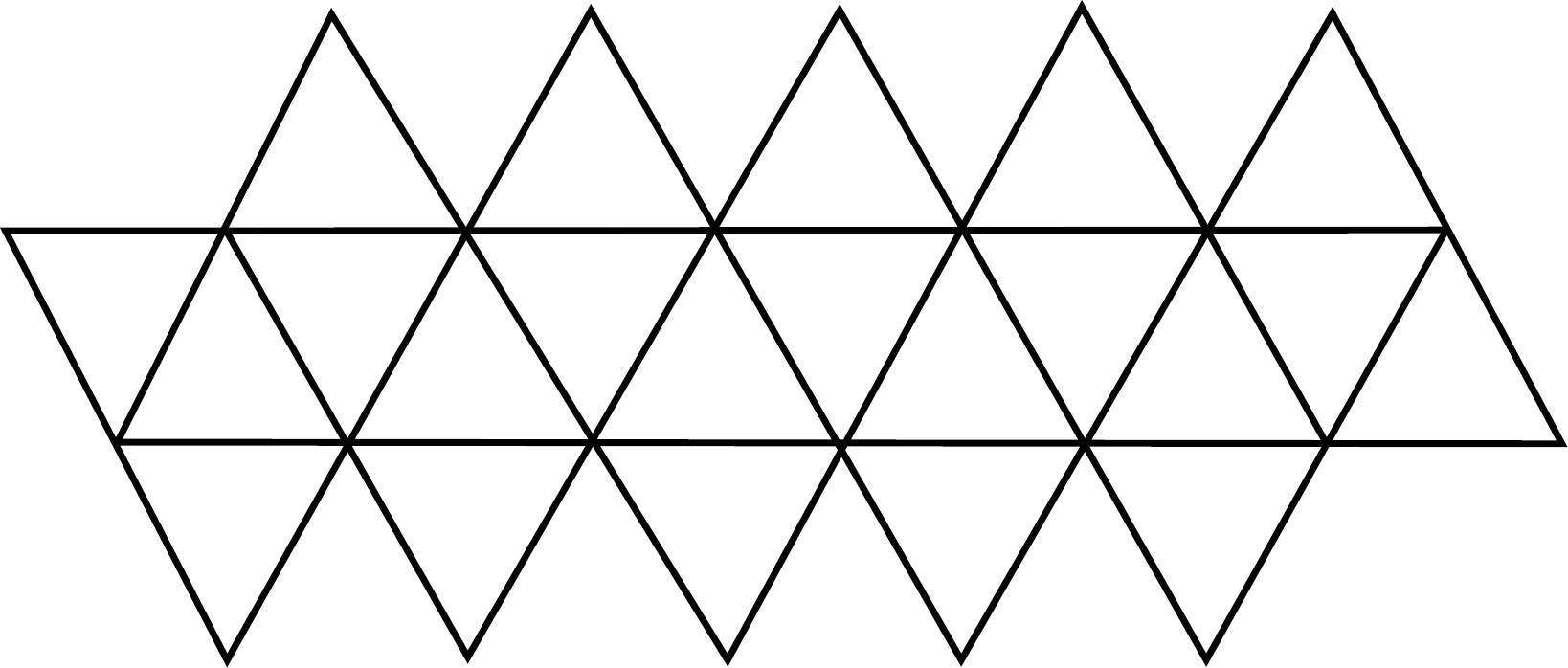}
\end{center}
\caption{A class of segment outerplanar graphs.}
\label{fig_outerplanar}
\end{figure} 

In Section \ref{section_buffon}, we investigated randomly generated sets of segments with fixed length. A related direction for future work is to explore properties of Buffon segment sets with non-uniform lengths; for example, the lengths of the segments could be random variables with a given probability distribution. The expected value of other parameters of $G_M$ (such as independence number, maximum matching, etc.) could also be explored, for a Buffon set $M$ with uniform or non-uniform length segments. 

In Section \ref{section_efl}, we introduced a geometric variant of the EFL Conjecture, and proved it true for several classes of segments and lines. These results are largely disjoint from previous partial results on the EFL Conjecture, and the geometric formulation of the EFL Conjecture allows us to approach it using geometric tools and techniques. 
A parameter $\chi_E(M)$ related to the optimization problem \textsc{EFL-Coloring} can be defined as the smallest value of $k$ such that $M$ has an EFL coloring with $k$ colors. Proposition \ref{prop_efl_tree} and Theorem \ref{thm_efl_k3free} showed that $\chi_E(M)=w(M)$ for segment trees and line $K_3$-free graphs. It would be interesting to determine whether $\chi_E(M)$ can be arbitrarily higher than $w(M)$, i.e., whether there exists a family of segment sets for which $w(M)=o(\chi_{E}(M))$. An example of a segment set $M$ for which $\chi_{E}(M)$ is strictly greater than $w(M)$ is the set of segments corresponding to the edges of a straight-line embedding of $K_4$. Deriving other bounds on $\chi_{E}(M)$ approaching $m(M)$ would be a step to proving the EFL Conjecture for general sets of segments and lines. Computational approaches for EFL coloring could also be of independent interest. For example, the following integer program could be used to compute $\chi_{E}(M)$ for an arbitrary set of curves $M$.
\begin{align*}
\min &\sum_{1\leq k\leq p} y_k\\
&\sum_{1\leq k\leq p} x_{ik}=1  &\forall i\in \{1,\ldots,p\}\\
&x_{ik}\leq y_k &\forall i,k\in \{1,\ldots,p\}\\
&x_{ik}+x_{jk}\leq 1 &\forall \{i,j\}\subset P(s,M), s\in M, k\in\{1,\ldots,p\}\\
&y_k,x_{ik}\in \{0,1\} &\forall i,k\in\{1,\ldots,p\}
\end{align*}
Here $y_k=1$ if color $k$ is used, and $x_{ik}=1$ if intersection point $i$ gets color $k$. Results like the ones derived in Section \ref{section_bounds} can be  used to bound the number of constraints in this integer program.

\section*{Acknowledgements}
 
We thank Edinah Gnang for suggesting the study of Buffon segments and Stephen Hartke for several useful discussions. This work is partially supported by NSF-DMS grants 1603823 and 1604458.


\end{document}